\def\smallddots{\mathinner{\raise7pt\hbox{.}\raise4pt\hbox{.}\raise1pt\hbox{.}}}
\def\smallsdots{\mathinner{\raise1pt\hbox{.}\raise4pt\hbox{.}\raise7pt\hbox{.}}}
\DeclareMathOperator{\diag}{diag}
\DeclareMathOperator{\rank}{rank}
\DeclareMathOperator{\nrank}{nrank}
\newtheorem{theorem}{Theorem}[section]
\numberwithin{equation}{section}
\numberwithin{table}{section}
\newtheorem{lemma}{Lemma}[section]
\newtheorem{corollary}{Corollary}[section]
\newtheorem{algorithm}{Algorithm}[section]
\newtheorem{definition}{Definition}[section]
\newtheorem{remark}{Remark}[section]
\newtheorem{problem}{Problem}[section]
\begin{document}

\title{Fast Derandomized Low-rank Approximation and  
 Extensions
%\footnote{Supported by NSF Grant CCF--1116736 and
%PSC CUNY Award  68862--00 46}
\thanks {The results of this paper have been  
presented at 
%CASC'2015, in Aahen, Germany, September 2015, and
the Eleventh International Computer Science Symposium in Russia 
(CSR'2016),
in St. Petersbourg, Russia, June 2016  (see \cite{PZ16}).}} 
\author{Victor Y. Pan} 

\author{Victor Y. Pan$^{[1, 2],[a]}$, Liang Zhao$^{[2],[b]}$, and
John Svadlenka$^{[2],[c]}$
%, and Qi Luan$^{[2],[d]}$ 
\\
\and\\
$^{[1]}$ Department of Mathematics and Computer Science \\
Lehman College of the City University of New York \\
Bronx, NY 10468 USA \\
$^{[2]}$ Ph.D. Programs in Mathematics  and Computer Science \\
The Graduate Center of the City University of New York \\
New York, NY 10036 USA \\
$^{[a]}$ victor.pan@lehman.cuny.edu \\
http://comet.lehman.cuny.edu/vpan/  \\
$^{[b]}$  lzhao1@gc.cuny.edu \\
$^{[c]}$ jsvadlenka@gradcenter.cuny.edu 
% \\ $^{[d]}$ qi-luan@yahoo.com
} 
\date{}

\maketitle

%------------------------------------------------------------------------------
%------------------------------------------------------------------------------

\begin{abstract} 

\begin{itemize}
  \item%1
%The celebrated algorithms for l
\noindent Low-rank approximation of a matrix by means of structured random sampling 
has been consistently efficient in its extensive empirical studies 
around the globe, but adequate formal support for this empirical phenomenon 
has been missing  so far.
\item%2
Based on our novel insight into the subject, we provide
such an elusive formal support and
derandomize and simplify
the known numerical algorithms
for low-rank approximation and related computations. 
%for low-rank approximation of a matrix 
%by means of sampling more efficient multipliers.
\item%3
Our techniques can be applied to some other 
areas of fundamental matrix computations, in particular 
to the Least Squares Regression, 
 Gaussian elimination with no pivoting and block  Gaussian elimination.
\item%4
Our formal results and our numerical tests
 are in good accordance with each other.
\end{itemize}
\end{abstract}

\paragraph{\bf Key Words:}
Low-rank approximation,
% of a matrix,
Random sampling, Derandomization

\paragraph{\bf 2000 Math. Subject Classification:}
15A52, 68W20,  65F30, 65F20

% - - - - - - - - - - - - - - - - - - - - - - - - - - - - - - - - - - - - -

\section{Introduction}\label{sintr}

% - - - - - - - - - - - - - - - - - - - - - - - - - - - - - - - - - - - - -

\subsection{The problem of low-rank approximation and our progress briefly}\label{sprbpr}

%------------------------------------------------------------------------------

{\em Low-rank approximation of a 
matrix}  has a variety of applications to
the most
fundamental matrix computations  \cite{HMT11} and
 numerous problems of data mining and analysis,
``ranging from term document data to DNA SNP data" \cite{M11}.
% see
%\cite{GZT97}, \cite{GTZ97}, \cite{T00},
%\cite{FKV}, and \cite{DKM06},
%for sample early works.
Classical solution algorithms  
use SVD or rank-re\-veal\-ing
factorizations, but
the alternative solution 
by means of random sampling
is numerically reliable, robust, and
computationally and conceptually simple
and has become highly and increasingly popular
 in the last decade
(see  \cite{HMT11}, \cite{M11}, and
\cite[Section 10.4.5]{GL13}
for surveys and ample bibliography).

In particular the paper \cite{HMT11} proves that
random sampling  algorithms applied with Gaussian multipliers
 produce low-rank approximation
with a probability close to 1,
but empirically the algorithms work as efficiently
 with various random structured multipliers.
  
Adequate formal support for this empirical evidence has been elusive so far, 
but based on our new insight we obtain such a support and furthermore
 derandomize these algorithms and  simplify them
by applying them with some sparse and structured multipliers.  
The known links 
enable immediate extensions of our results to
 various important computations in numerical linear algebra 
and data mining and analysis, but we also
extend them to other fundamental computational problems 
solved by using random multipliers.
 %\noindent 
We outline our results in this section. They are in good accordance with
 our numerical tests
of Section \ref{ststs}.

% - - - - - - - - - - - - - - - - - - - - - - - - - - - - - - - - - - - - -

\subsection{Some definitions}\label{ssdef}

%------------------------------------------------------------------------------

\begin{itemize}
  \item%1 
Typically we use the concepts ``large", ``small", ``near", ``close", ``approximate", 
``ill-conditioned" and ``well-conditioned"  
quantified in the context, but we specify them quantitatively as needed. 
\item%2
Hereafter 
``$\ll$" means ``much less than";
{\em ``flop"} stands for ``floating point arithmetic operation".
\item%3
$I_s$ is the $s\times s$ identity matrix.  $O_{k,l}$ is a $k\times l$
matrix filled with zeros. ${\bf o}$ is a vector filled with zeros.
\item%4
 $(B_1~|~B_2~|~\dots~|~B_h)$ 
denotes a $1\times h$ block matrix with the blocks $B_1,B_2,\dots,B_h$.
\item%5
$\diag(B_1,B_2,\dots,B_h)$
denotes a $h\times h$ block diagonal matrix with  
diagonal  blocks $B_1,B_2,\dots,B_h$.
\item%6
 $\rank(W)$, $\nrank(W)$, and $||W||$ denote the  {\em rank}, 
 {\em numerical rank}, and the {\em spectral norm} of a matrix $W$, respectively. 
\item%7
$W^T$ and $W^H$, and denote its   
 transpose and Hermitian  transpose, respectively.
\item%8
An $m\times n$ matrix $W$ is called  {\em unitary}  
if $W^HW=I_n$ or if  $WW^H=I_m$. If this matrix is known to be real, then it is also
and preferably  called {\em orthogonal}. 

($||UW||=||W||$ and $||WU||=||W||$ if the matrix $U$ is unitary.)
\item%8
$W=S_{W,\rho}\Sigma_{W,\rho}T^T_{W,\rho}$ is 
 {\em  compact SVD}
of a matrix $W$ of rank
$\rho$ with
 $S_{W,\rho}$ and $T_{W,\rho}$ denoting the  unitary
matrices of its singular vectors and
$\Sigma_{W,\rho}=\diag(\sigma_j(W))_{j=1}^{\rho}$ the
diagonal matrix of its singular values
in non-increasing order,
$\sigma_1(W)\ge \sigma_2(W)\ge \dots\ge 
\sigma_{\rho}(W)>0$. ($\sigma_1(W)=||W||.$)
\item%4
 $\kappa(W)=\sigma_1(W)/\sigma_{\rho}(W)\ge 1$
 denotes the  {\em condition number}
of a matrix $W$. 
%\item%12
%The  $\xi$-rank of a matrix, for a fixed positive  $\xi$,
%is the minimum rank of its approximations within 
%the norm bound  $\xi$. 
%The   {\em numerical rank} of a matrix is its  $\xi$-rank
%for $\xi$ being small in context.
A matrix
 is called
{\em ill-conditioned} if 
its condition number 
 is large in context and
%or equivalently if its rank exceeds its numerical rank. 
is called {\em well-conditioned}
if  this number
$\kappa(W)$ is reasonably bounded.

(An $m\times n$ matrix is ill-conditioned
if and only if it has a matrix of a smaller rank nearby
or equivalently
 if and only if its rank exceeds 
its numerical rank; 
an $m\times n$ matrix is well-conditioned 
if and only if it has full numerical rank $\min\{m,n\}$.
A matrix $W$ is unitary if and only if $\kappa(W)=1$.)
%(The ratio of the output and input error norms of Gaussian elimination 
%is roughly the condition number of an input matrix,
%cf.  \cite{GL13}.)$\nrank (M)=r$ if and only if a matrix $M-E$ is well-conditioned and 

 \item%9
{\em ``Likely"}  
means ``with a probability close to 1", 
the acronym ``{\em i.i.d.}" stands
for
``independent identically distributed",
and we
refer to ``standard Gaussian random" variables 
  just as ``{\em Gaussian}". 
\item%10
We call an $m\times n$ matrix {\em  Gaussian} and denote it $G_{m,n}$ 
if all its entries are  i.i.d. Gaussian variables. 
\item%11
$\mathcal G^{m\times n}$, $\mathbb R^{m\times n}$, and $\mathbb C^{m\times n}$
 denote the classes of $m\times n$ Gaussian, real, or complex matrices, respectively.
\item%12
$\mathcal G_{m,n,r}$, $\mathbb R_{m,n,r}$, and $\mathbb C_{m,n,r}$,
for $1\le r\le \min \{m,n\}$,
 denote the classes of $m\times n$ matrices $M=UV$
(of rank at most $r$)
where both $m\times r$ matrix $U$  and $r\times n$ matrix $V$
are Gaussian, real, and complex, respectively.
\item%12a
If   $U\in \mathcal G^{m\times r}$  and $V\in \mathcal G^{r\times n}$,
then we call $M=UV$ an
 $m\times n$ {\em factor-Gaussian
matrix of expected rank} $r$.
(In this case
the matrices $U$, $V$ and $M$  have rank $r$ 
with probability 1
 by virtue of Theorem \ref{thrnd}.)
%Recall that 
%and that 
%$||U||=||U^+||=1$,
%$||VW||\le ||V||~||W||$ and $||VW||_F\le ||V||_F||W||_F$,
%for any matrix product $VW$.
 \end{itemize}
   
% - - - - - - - - - - - - - - - - - - - - - - - - - - - - - - - - - - - - -

\subsection{The basic algorithm}\label{sbsalg}

%------------------------------------------------------------------------------
%------------------------------------------------------------------------------

A matrix $M$ can be represented (respectively, approximated) 
by a product $UV$ of two matrices $U\in \mathbb C^{m\times r}$ 
and $V\in \mathbb C^{r\times n}$ if and only if $r\ge \rank(M)$ 
(respectively, $r\ge \nrank(M)$),
and our main goal is the
 computation of such
a representation or approximation. 

We begin with the following basic algorithm
for the {\em fixed rank problem},
where the integer $r=\nrank(M)$ or
$r=\rank(M)$ is known. 
Otherwise
we can compute it  by means of
binary search based on recursive application of the algorithm
or proceed, e.g., as in our Algorithm \ref{alg11} of Section \ref{strrs}. 

\begin{algorithm}\label{alg1} {\rm Range Finder (See Figure 1
and compare \cite[Algorithms 4.1 and 4.2]{HMT11}).}

%------------------------------------------------------------------------------

\begin{description}

%------------------------------------------------------------------------------

\item[{\sc Input:}] 
An $m\times n$ matrix  $M$, a nonnegative tolerance $\tau$, and an integer  $r$
such that $0<r\ll \min\{m,n\}$.

%------------------------------------------------------------------------------

%\item[{\sc Output:}] 
%A  rank-$r$  approximation matrix $\tilde M$ 
%and the relative error $||\tilde M-M||/||M||$. 

%------------------------------------------------------------------------------

\item[{\sc Initialization:}] 
 Fix an 
%oversampling
 integer $l$ such that $r\le l\ll \min\{m,n\}$. 
Generate an $n\times l$  matrix $B$.

%------------------------------------------------------------------------------ 

\item[{\sc Computations:}]
%\item (cf. items 1.4.6 and 1.4.10): $~$

\begin{enumerate}
\item %1
Compute the  $m\times l$ matrix $MB$. Remove its columns that have small norms.
\item %2 
Orthogonalize its 
remaining columns (cf. \cite[Theorem 5.2.3]{GL13}), 
compute and output
the resulting $m\times \bar l$ 
matrix $U=U(MB)$ where $\bar l\le l$ .
\item %3
%Compute and output the $l\times n$  matrix $U^TM$
%(together with the matrix $U$ it represents 
%the $m\times n$
 %matrix $\tilde M=UU^TM$).
%\item %4
Estimate the error norm 
$\Delta=||\tilde M-M||$ for $\tilde M=UU^TM$.
%at a cost of performing a bounded number
%of multiplications of the matrix $\tilde M-M$ by random vectors.

If $\Delta\le \tau$, output SUCCESS; otherwise
 FAILURE.
\end{enumerate}

%------------------------------------------------------------------------------

\end{description}

%------------------------------------------------------------------------------

\end{algorithm}

%------------------------------------------------------------------------------

\begin{figure}[htb] 
\centering
\includegraphics[scale=0.25] {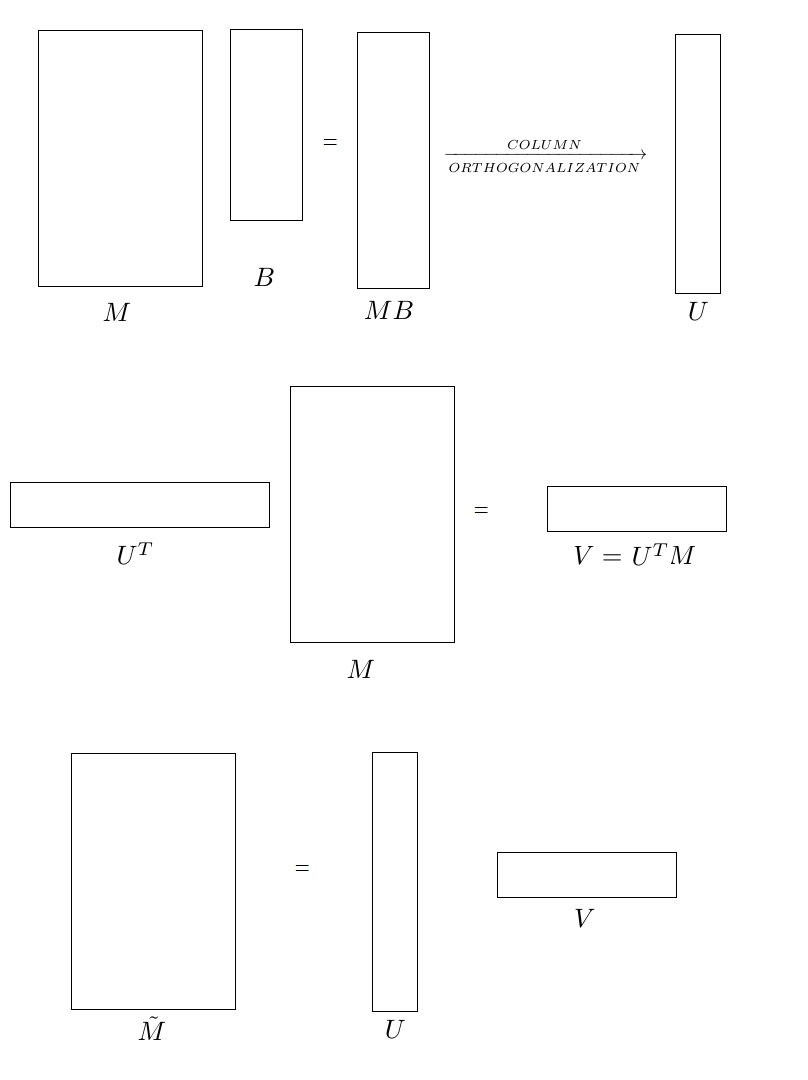}
\label{Fig01}
\caption{Matrices of Algorithm \ref{alg1}}
\end{figure}

%------------------------------------------------------------------------------

At Stage 3 
probabilistic  
estimate for the norm $\Delta$ can be 
given by the norm $||\tilde MH-MH||$
for a random $n\times k$ matrix $H$ and a reasonably small 
positive integer $k$ (this would
extend
the Frievalds' 
probabilistic test of \cite{F77} 
(cf. \cite{MR95}, \cite{AS00})).
\cite[Algorithm 4.2]{HMT11}
 chooses  random matrix $B$  at Stage 1 
and then proceeds with 
 $k=l$ and $H=B$.

If  Stage 3 outputs SUCCESS, then   
%serve as 
a rank-$\bar l$ approximation to  the matrix $M$
is given by the matrix $\tilde M=UU^TM$,\footnote{By applying rank-revealing orthogonalization at Stage 2 
we can remove some extraneous columns and obtain
rank-$r$ approximation of the matrix $M$.}
 but the papers
 \cite[Section 5]{HMT11} and \cite{CW13} 
avoid costly multiplication of 
 $U^T$ by $M$. Their alternative solution relies on
the
extension of
 Stage 1 to randomized approximation
of the leading part 
of the compact SVD of the matrix $M$,
associated with its 
$r$  largest singular values.

The complexity of these algorithms is dominated at Stage 1,
which uses 
 $(2n-1)ml$ flops
in the case of
generic matrices $M$ and $B$.
With intricate application of {\em sparse embedding multipliers} $B$,
the paper  \cite{CW13} computes a low-rank 
approximation (with failure probability at most 1/5)
by using about $2mn$ flops for generic input $M$,
but alternative computations using $O(mnl)$ flops 
can be still of interest (see Section \ref{sextprg}).

%and this is the main challenge in our paper.
 
% - - - - - - - - - - - - - - - - - - - - - - - - - - - - - - - - - - - - -

\subsection{The choice of multipliers: basic observations}\label{smlbsc}

%------------------------------------------------------------------------------
%------------------------------------------------------------------------------

We readily verify the following  theorem (see Section \ref{sprth1}):

\begin{theorem}\label{thall}
Given an $m\times n$ matrix $M$ with $\nrank(M)=r$
 and a reasonably small positive tolerance  
$\tau$,  
Algorithm \ref{alg1} outputs SUCCESS
if and only if $\nrank(MB)
%=\nrank(T^T_rB)
=r$. 
%where $T^T_r$ is the matrix formed by $r$ 
%singular vectors of the matrix $M$
%associated with its $r$ largest singular values
%(hereafter we call these singular vectors {\em leading}),
%that is, if and only if the $r\times n$
%matrix $T^T_r$
%is orthogonal or nearly orthogonal to more than
%$l-r$ columns of a multiplier $B$.
\end{theorem}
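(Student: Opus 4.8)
The plan is to first settle the exact-rank case, where the equivalence is transparent, and then to recover the stated approximate version as its numerically stable counterpart. First I would record the geometric meaning of Stages 1--3: deleting from $MB$ the columns of small norm and orthogonalizing the remaining ones produces a matrix $U$ with orthonormal columns whose range equals the column space of the matrix $MB'$ obtained from $MB$ by that deletion; since removing a column of norm $\nu$ changes every singular value by at most $\nu$, we have $\nrank(MB')=\nrank(MB)$ within the working tolerance, and $UU^T$ is the orthogonal projector onto $\range(MB')$. Hence $\Delta=\|\tilde M-M\|=\|(I_m-UU^T)M\|$, and Stage 3 reports SUCCESS exactly when this norm is at most $\tau$.

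In the exact case $\rank(M)=r$, $\tau=0$ the theorem is immediate: $\Delta=0$ iff $\range(M)\subseteq\range(U)=\range(MB')$, and since $\range(MB')\subseteq\range(MB)\subseteq\range(M)$ always, this is equivalent to $\range(MB)=\range(M)$, i.e. to $\rank(MB)=\rank(M)=r$. For the general case I would write $M=M_r+E$ with $M_r=S_{M,r}\Sigma_{M,r}T_{M,r}^T$ the rank-$r$ truncated SVD, so $\|E\|=\sigma_{r+1}(M)$ is negligible; then from $MB=M_rB+EB$ and $\sigma_{r+1}(MB)\le\|EB\|\le\|B\|\,\|E\|$ one gets $\nrank(MB)\le r$, so it suffices to prove that $\Delta\le\tau$ holds iff $\sigma_r(MB)$ is not small, i.e. iff $\nrank(MB)=r$.

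For "$\Leftarrow$" I would argue that if $\sigma_r(MB)$ is not small then $\sigma_r(M_rB)\ge\sigma_r(MB)-\|EB\|$ is not small either, so $M_rB$ has rank $r$ and $\range(M_rB)=S_{M,r}\range(\Sigma_{M,r}T_{M,r}^TB)=\range(S_{M,r})=\range(M_r)$; a standard $\sin\Theta$ perturbation bound then gives that the angle between $\range(MB)=\range(U)$ and $\range(M_r)$ is of order $\|EB\|/\sigma_r(M_rB)$, so $\|(I_m-UU^T)M_r\|$ is small and $\Delta\le\|(I_m-UU^T)M_r\|+\|E\|\le\tau$. For "$\Rightarrow$" I would use the contrapositive: if $\nrank(MB)<r$ then $\range(U)$ has numerical dimension below $r$, hence is numerically orthogonal to some unit vector $w$ of the $r$-dimensional dominant left singular subspace $\range(M_r)$; writing $w=M_rv$ with $\|v\|\le 1/\sigma_r(M)$ yields $\|(I_m-UU^T)M_r\|\ge\|(I_m-UU^T)w\|/\|v\|\ge(1-o(1))\sigma_r(M)$, so $\Delta\ge\|(I_m-UU^T)M_r\|-\|E\|$ is not small and Stage 3 reports FAILURE.

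The hard part will be the quantitative side of the "$\Leftarrow$" perturbation step rather than anything conceptual: the subspace-angle bound, and hence the numerical threshold that actually separates SUCCESS from FAILURE, depends on $\|B\|$, on $\kappa(M_r)=\sigma_1(M)/\sigma_r(M)$, and, through $\sigma_r(M_rB)=\sigma_r(\Sigma_{M,r}T_{M,r}^TB)\ge\sigma_r(M)\,\sigma_r(T_{M,r}^TB)$, on how well conditioned the $r\times l$ matrix $T_{M,r}^TB$ is. Since $\nrank$ and the phrases "small" and "not small" are quantified in context, I would keep these estimates at the qualitative level here; a fully quantitative statement merely tracks the three quantities above, and the point that $T_{M,r}^TB$ can be made well conditioned by a suitable structured choice of $B$ is precisely what the remainder of the paper is about. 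I would also dispose of the routine check that the column deletion and orthogonalization at Stages 1--2 change neither $\range(U)$ nor $\nrank(MB)$ beyond the working tolerance, which follows from the singular-value perturbation remark made at the outset.
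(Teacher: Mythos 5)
Your proposal follows essentially the same route as the paper: the ``only if'' direction by the same rank/singular-value counting argument, and the ``if'' direction by splitting $M=M_r+E$ and bounding the perturbation of the projector onto $\range(MB)$ relative to $\range(M_rB)=\range(M_r)$ by a quantity of order $\|EB\|/\sigma_r(M_rB)=\|(M_rB)^+\|\,\|EB\|$, which is exactly the bound the paper imports as Corollary C.1 of [PQY15] (your $\sin\Theta$ bound is the same estimate in different clothing). The quantitative dependence you flag on $\|B\|$ and $\|(M_rB)^+\|$ is precisely what the paper records in its bound (4.5), so the two arguments match in both structure and content.
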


\begin{definition}\label{defbd}
For two integers $l$ and $n$, $0<l\le n$,
and any fixed 
 $n\times l$ multiplier $B$,
partition the set
 of  $m\times n$ matrices $M$
with $\nrank (M)=r$
into the set $\mathcal M_B=\mathcal M_{B,{\rm good}}$ 
of ``$B$-good" matrices such that $\nrank (MB)=r$
and the set
$\mathcal M_{B,{\rm bad}}$
of ``$B$-bad" 
matrices such that $\nrank (MB)<r$.
\end{definition}

The following simple observations should be instructive. 

\begin{theorem}\label{thbd} (Cf. Remark \ref{rewcd}.)
Consider a vector ${\bf v}$ of dimension $n$, 
an $n\times n$
 unitary matrix $U$,  and an $n\times l$ 
unitary  matrix
 $B$, so that $n\times l$ matrix
$UB$ is
 unitary. Then 

(i)  $\mathcal M_{UB}=(\mathcal M_B)U$, that is,
the map $B\rightarrow UB$ multiplies
the class $\mathcal M_B$ of $B$-good $m\times n$ matrices 
by the  unitary matrix $U$, 
 
(ii)  $\mathcal M_{B}\subseteq \mathcal M_{(B~|~{\bf v})}$,
that is, appending a column to 
a multiplier $B$ can only expand 
the class $\mathcal M_{B}$, 
 and 

(iii) this class fills the whole space $\mathbb C_{m,n,r}$
or  $\mathbb R_{m,n,r}$ if $l=n$.
\end{theorem}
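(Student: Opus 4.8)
The plan is to verify each of the three parts by tracking how the numerical rank of $MB$ behaves under the stated operations, using Theorem \ref{thall} (equivalently, Definition \ref{defbd}) to translate ``$B$-good'' into the condition $\nrank(MB)=r$. The main tool throughout is the unitary invariance of the spectral norm and of singular values: $\|WU\|=\|W\|$ and $\sigma_j(WU)=\sigma_j(W)$ for unitary $U$, which immediately gives $\nrank(WU)=\nrank(W)$; and similarly $\nrank(UW)=\nrank(W)$ for unitary $U$ acting on the left. None of these steps is deep; the content is just bookkeeping, so I do not expect any real obstacle — the only thing to be careful about is the asymmetry between left and right multiplication and the precise shapes of the matrices involved.

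For part (i), fix $M$ with $\nrank(M)=r$. The claim $\mathcal M_{UB}=(\mathcal M_B)U$ means: $M\in\mathcal M_{UB}$ iff $MU^{-1}\in\mathcal M_B$, i.e. iff $MU^HU B$ has numerical rank $r$ — wait, more carefully, $M\in(\mathcal M_B)U$ means $M=NU$ for some $N\in\mathcal M_B$, i.e. $MU^H\in\mathcal M_B$, i.e. $\nrank(MU^H B)=r$. On the other hand $M\in\mathcal M_{UB}$ means $\nrank(M\cdot UB)=r$. So I must show $\nrank(M U B)=r \iff \nrank(MU^H B)=r$; but this is not quite the statement — let me recheck: the map is $B\mapsto UB$, and we compare $\mathcal M_{UB}$ with $(\mathcal M_B)U$. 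Here $U$ is $n\times n$ unitary and $B$ is $n\times l$, so $UB$ is again $n\times l$. For an $m\times n$ matrix $M$, $M\in\mathcal M_{UB}$ iff $\nrank(M(UB))=r$, while $M\in(\mathcal M_B)U$ iff $M=NU$ with $N\in\mathcal M_B$, i.e. $\nrank((MU^{-1})B)=r$; since $M(UB)=(MU)B$ and $MU^{-1}=MU^H$, applying the substitution $N:=MU$ shows $\nrank(M(UB))=r$ iff $N=MU$ satisfies $\nrank(NB)=\nrank(MUB)=r$, i.e. $MU\in\mathcal M_B$, i.e. $M\in(\mathcal M_B)U^H$. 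So part (i) as I read it is really the assertion $\mathcal M_{UB}=(\mathcal M_B)U^H$, or equivalently $\mathcal M_{U^HB}=(\mathcal M_B)U$; I will follow the paper's convention and simply note $\nrank(M)=\nrank(MU)$ (unitary invariance) so that $M\mapsto MU$ is a bijection of $\{\nrank=r\}$ carrying $\mathcal M_{UB}$ onto $\mathcal M_B$ and hence $\mathcal M_{UB}=(\mathcal M_B)U^{-1}$; the displayed identity in the theorem is then this relation written with $U$ in place of $U^{-1}=U^H$, harmless since the statement is symmetric in $U\leftrightarrow U^H$.

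For part (ii), suppose $M\in\mathcal M_B$, so $\nrank(MB)=r$. Since $M(B\mid{\bf v})=(MB\mid M{\bf v})$ is obtained from $MB$ by appending one column, its singular values interlace those of $MB$ (or simply: appending a column cannot decrease any singular value, $\sigma_j(MB\mid M{\bf v})\ge\sigma_j(MB)$, and the top singular value can only grow or stay, while the $(r+1)$-st stays $0$ because $\rank$ of $M$ times anything is $\le r$). Hence $\nrank(M(B\mid{\bf v}))\ge\nrank(MB)=r$; on the other hand $\rank(M(B\mid{\bf v}))\le\rank(M)\le r$ forces $\nrank\le r$. So $\nrank(M(B\mid{\bf v}))=r$ and $M\in\mathcal M_{(B\mid{\bf v})}$. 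I should double-check the edge case where $\rank(M)$ could exceed $r$ even though $\nrank(M)=r$ — that is exactly the ill-conditioned case; there $\sigma_{r+1}(MB)$ is small (below the threshold) but possibly nonzero, and appending a column keeps $\sigma_{r+1}$ below threshold by the interlacing/monotonicity bound $\sigma_{r+1}(MB\mid M{\bf v})\le\sigma_{r+1}(M)\,\|(B\mid{\bf v})\|/\sigma_{\min}$-type estimate; I will phrase this cleanly via $\sigma_{r+1}(M(B\mid{\bf v}))\le\sigma_{r+1}(M)\|(B\mid{\bf v})\|$, which stays small, so $\nrank$ is still $r$. This is the one place where a one-line inequality is needed rather than an exact rank argument, and it is the closest thing to an ``obstacle,'' though still routine.

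For part (iii), take $l=n$ and $B$ an $n\times n$ unitary matrix. Then for any $M$ with $\nrank(M)=r$ we have $\nrank(MB)=\nrank(M)=r$ by unitary invariance of singular values, so every such $M$ lies in $\mathcal M_B$; thus $\mathcal M_B$ is all of $\mathbb C_{m,n,r}$ (or $\mathbb R_{m,n,r}$ in the real case, using an orthogonal $B$). This also exhibits the extreme case of part (ii): once the multiplier has $n$ independent (here orthonormal) columns, no matrix is $B$-bad. I will present the three parts in this order, each as a short paragraph, leaning on Theorem \ref{thall} to pass between the SUCCESS condition and the numerical-rank condition, and flag the ill-conditioned subcase of (ii) as the only spot requiring an explicit singular-value inequality rather than a bare rank equality.
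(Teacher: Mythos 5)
Your proof is correct and follows essentially the same route as the paper's own (very terse) proof: part (i) rests on the identity $(MU)B=M(UB)$, part (ii) on the monotonicity of $\nrank(MB)$ under appending a column, and part (iii) on unitary invariance of singular values. You supply two details the paper glosses over --- the observation that the identity literally yields $\mathcal M_{UB}=(\mathcal M_B)U^H$ rather than $(\mathcal M_B)U$ (harmless, since $U$ ranges over all unitary matrices), and the upper bound $\nrank(M(B~|~{\bf v}))\le r$ needed to complete part (ii) --- and both of these additions are correct.
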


\begin{proof}
Part (i) follows  because $(MU)B=M(UB)$.
Part (ii) follows  because  $\nrank(MB)\le \nrank(M(B~|~{\bf v}))$. 
Part (iii) follows  because  $\nrank (MB)=\nrank(M)$ if $B$ is 
an $n\times n$ unitary matrix.
\end{proof}

%------------------------------------------------------------------------------

\subsection{\bf A recursive algorithm}\label{strrs}

%------------------------------------------------------------------------------

Based on Theorem \ref{thbd} we devise the following algorithm
where $\nrank(M)$ is not known.

\begin{algorithm}\label{alg11} 
{\rm  Recursive low-rank representation/approximation of a matrix.} See Figure 2
and  cf. \cite[Algorithm 4.2]{HMT11}.

%------------------------------------------------------------------------------

\begin{description}

%------------------------------------------------------------------------------

\item[{\sc Input:}] 
An $m\times n$ matrix  $M$ and a nonnegative tolerance $\tau$.

%------------------------------------------------------------------------------

%\item[{\sc Output:}] 
%A  rank-$r$  approximation matrix $\tilde M$ 
%and the relative error $||\tilde M-M||/||M||$. 

%------------------------------------------------------------------------------

\item[{\sc Computations:}]
%\item (cf. items 1.4.6 and 1.4.10): $~$

\begin{enumerate}
\item %1
Generate an $n\times n$  unitary matrix $\widehat B$.
\item %2 
Fix positive integers $l_1,\dots,l_h$
such that $l_1+\cdots+l_h=n$
(in particular $l_j=1$ for all $j$ if $h=n$)
and represent the matrix $\widehat B$ as 
a block vector $(B_1~|~B_2~|~\dots~|~B_h)$
where the block $B_i$ has size $n\times l_i$ for $i=1,\dots,h$.
\item %3
Recursively, for $i=1,2,\dots$, apply Algorithm \ref{alg1}
to the matrix $M$ by
substituting $l^{(i)}=\sum_{j=1}^il_j$ for $l$ 
and $B^{(i)}=(B_1~|~B_2~|~\dots~|~B_i)$ for $B$. Stop when 
the algorithm outputs SUCCESS.
\end{enumerate}

%------------------------------------------------------------------------------

\end{description}

%------------------------------------------------------------------------------

\end{algorithm}

%------------------------------------------------------------------------------

\begin{figure}[htb] 
\centering
\includegraphics[scale=0.5] {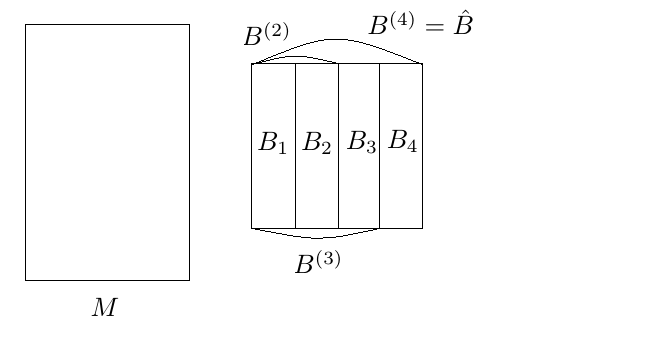}
\label{Fig05}
\caption{Matrices of Algorithm \ref{alg11}}
\end{figure}

%------------------------------------------------------------------------------

By virtue of part (iii)
of Theorem \ref{thbd}
the algorithm stops and outputs SUCCESS either at the $h$th Stage 
(when $l^{(h)}=n$) or earlier, and we are surely interested in  yielding
SUCCESS already for $l^{(i)}\ll n$ and in saving flops for 
matrix multiplications at Stage 3.

%------------------------------------------------------------------------------ 

\begin{remark}\label{resprs01}
We are likely  to save some flops if we compute  
 approximate matrix products
by using
leverage scores \cite{W14}
(a.k.a. sampling probabilities \cite[Sections 3 and 5]{M11}).
\end{remark}

%------------------------------------------------------------------------------ 

\begin{remark}\label{rewcd}
Clearly, the blocks $B_j$ and $B^{(i)}$ of the unitary  matrix $\widehat B$
are unitary as well, but we can readily 
extend both Theorem \ref{thbd} and Algorithm \ref{alg11} 
to the case where we 
apply them to a nonsingular and well-conditioned
 (rather than unitary)
 $n\times n$
matrix $\widehat B$.
In that case all multipliers $B^{(i)}$ and  all their  blocks $B_j$ 
are also well-conditioned
matrices of full rank, 
and moreover  $\kappa(B^{(i)})\le \kappa(B)$ and
$\kappa(B_j)\le \kappa(B)$ for all $i$ and $j$
(cf. \cite[Corollary 8.6.3]{GL13}).
\end{remark}  

% - - - - - - - - - - - - - - - - - - - - - - - - - - - - - - - - - - - - -

\subsection{\bf Benefits of
using  
Gaussian and random structured 
multipliers}\label{scmpgrs}

%------------------------------------------------------------------------------ 

\begin{theorem}\label{th0}  
% and \cite[Theorem 10.6]{HMT11}. 
Let  Algorithm \ref{alg1} be applied with a Gaussian
multiplier $B\in \mathcal G^{n\times l}$. Then 

(i)  $\tilde M=M$ with probability 1
%outputs a rank-$r$ representation of the matrix $M$ 
%(see Section \ref{slrrp})
if $l\ge r=\rank (M)$ 
(cf. Theorem \ref{thlrnk})
and 

(ii) it is likely that $\tilde M\approx M$
if $\nrank (M)=r\le l$,
and the probability that $\tilde M\approx M$ 
approaches 1 fast as $l$ increases from $r+1$
(cf. Theorem \ref{th1}).
\end{theorem}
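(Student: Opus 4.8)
\textbf{Proof plan for Theorem \ref{th0}.}
The plan is to reduce both parts to the behavior of the numerical rank of the product $MB$ and then invoke Theorem \ref{thall}, which tells us that Algorithm \ref{alg1} reproduces $M$ (exactly or approximately) precisely when $\nrank(MB)=r$. For part (i), assume $r=\rank(M)$ and write a rank factorization $M=ST^T$ with $S\in\mathbb C^{m\times r}$ and $T\in\mathbb C^{n\times r}$ both of full rank $r$. Then $MB=S(T^TB)$, and since $B\in\mathcal G^{n\times l}$ with $l\ge r$, the $r\times l$ matrix $T^TB$ has full rank $r$ with probability $1$: its entries are linear combinations of the i.i.d.\ Gaussian entries of $B$, so the determinant of any $r\times r$ submatrix is a nonzero polynomial in those entries and hence vanishes only on a set of Lebesgue measure zero (this is exactly Theorem \ref{thrnd} invoked in the definition of factor-Gaussian matrices). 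Consequently $\rank(MB)=\rank(S)=r$ with probability $1$, so the columns of $U=U(MB)$ span the column space (equivalently, the range of $S$, which is the range of $M$), whence the orthogonal projector $UU^T$ fixes $M$ and $\tilde M=UU^TM=M$. This is the easy part.

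For part (ii), suppose only that $\nrank(M)=r\le l$. Let $M=S_{M,\rho}\Sigma_{M,\rho}T_{M,\rho}^T$ be the compact SVD, split the singular values into the ``large'' block $\Sigma_r=\diag(\sigma_j(M))_{j=1}^r$ and the ``small'' tail, and correspondingly write $M=M_r+E$ where $M_r$ is the best rank-$r$ approximation and $\|E\|=\sigma_{r+1}(M)$ is small by the hypothesis $\nrank(M)=r$. Then $MB=M_rB+EB$. The strategy is to show that, with probability approaching $1$ as $l$ grows past $r$, the matrix $M_rB$ has $r$ singular values bounded below by a quantity comparable to $\sigma_r(M)$, while $\|EB\|$ stays of order $\sigma_{r+1}(M)$ times a mild factor in $l,n$; a perturbation argument (Weyl's inequalities on singular values) then forces $\nrank(MB)=r$, and Theorem \ref{thall} yields SUCCESS and $\tilde M\approx M$. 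The lower bound on $\sigma_r(M_rB)$ is the crux: writing $M_rB = S_{M,r}\Sigma_r (T_{M,r}^TB)$ and noting $T_{M,r}^TB$ is itself Gaussian of size $r\times l$ (an orthogonal change of coordinates preserves the Gaussian distribution), one needs a lower tail bound on the smallest singular value of a rectangular Gaussian matrix, which decays to $0$ rapidly as $l$ increases from $r+1$ — precisely the content flagged by the pointer to Theorem \ref{th1}. I would cite that theorem (and the standard Gaussian-matrix extremal singular value estimates behind it) rather than reprove it here.

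The main obstacle is making the ``likely'' in part (ii) quantitative in a way that is honest about the dependence on the gap ratio $\sigma_{r+1}(M)/\sigma_r(M)$: if that ratio is not small, no multiplier can help, so the statement $\tilde M\approx M$ must be read relative to the tolerance $\tau$ and to how sharply the singular values of $M$ decay at the index $r$. Concretely, one must track that the bound for $\|\tilde M - M\|$ comes out as $\sigma_{r+1}(M)$ amplified by a factor that is a function of $\kappa(T_{M,r}^TB)$, and then argue that this amplification factor is bounded with probability $\to 1$ as $l-r$ grows, using the non-asymptotic estimates for $\sigma_{\min}$ and $\sigma_{\max}$ of Gaussian matrices. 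Assembling this into a clean inequality of the form ``$\|\tilde M-M\|\le c(l,n)\,\sigma_{r+1}(M)$ with probability at least $1-p(l-r)$, $p\to 0$'' is the real work; the algebraic identities $MB=S(T^TB)$ and the reduction via Theorem \ref{thall} are routine.
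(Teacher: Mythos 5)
Your proposal is correct, and its probabilistic core coincides with the paper's: part (i) is exactly the paper's argument (Theorem \ref{thlrnk} combined with Theorem \ref{thrnd}), and for part (ii) you use the same two key ingredients, namely the split $M=M_r+E$ and the observation that $T_{M,r}^TB$ is an $r\times l$ Gaussian matrix by rotational invariance (Lemma \ref{lepr3}), so that its extremal singular values are controlled by Theorems \ref{thsignorm} and \ref{thsiguna}. Where you diverge is in the deterministic reduction. You propose Weyl's inequalities to force $\nrank(MB)=r$ and then invoke Theorem \ref{thall}; the paper instead bounds $\|\tilde M-M\|$ directly through a perturbation estimate for the orthogonal projectors $U(MB)U(MB)^T$ versus $U(M_rB)U(M_rB)^T$ (Theorem \ref{thrrap}, citing \cite[Corollary C.1]{PQY15}), which gives $|\Delta-\sigma_{r+1}(M)|\le\sqrt 8\,\|(M_rB)^+\|\,\|EB\|_F+O(\|EB\|_F^2)$ and hence the explicit estimate of Theorem \ref{th1} with $\mathbb E(f)$ bounded for every $l\ge r+1$. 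Your route delivers the qualitative SUCCESS conclusion, but note that the ``if'' direction of Theorem \ref{thall} is itself proven in the paper via that same projector-perturbation inequality, so citing it does not let you bypass the estimate; you would still need it (or an equivalent) to turn ``$\tilde M\approx M$'' into the quantitative bound that part (ii) points to via Theorem \ref{th1}. Your closing remarks about the dependence on the ratio $\sigma_{r+1}(M)/\sigma_r(M)$ and on the conditioning of $T_{M,r}^TB$ correctly anticipate the shape of that final bound.
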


%------------------------------------------------------------------------------

The  theorem implies that
 Algorithm  \ref{alg11} is likely
to output SUCCESS at Stage $h$ for the smallest $h$ such that
$l^{(h)}\ge r$ in the case where  $B$ denotes a Gaussian 
(rather than unitary) matrix.

%--------------------------------------------------------------------------------
 
An $n\times l$ matrix of {\em subsample random Fourier or 
Hadamard transform}\footnote{Hereafter we use the acronyms {\em SRFT}  and {\em SRHT}.}
is  defined
 by $n+l$ random variables (see Remark \ref{resrft}), and we can   
pre-multiply it by a vector by using $O(n\log(l))$ flops, 
for $l$ of order $r\log (r)$,
(see \cite[Sections 4.6 and 11]{HMT11},
 \cite[Section 3.1]{M11}, and  \cite{T11}).
 For comparison,
  an $n\times l$ Gaussian matrix is  defined by its $nl$ random entries,
 and we need  $l(2n-1)$ flops in order 
to pre-multiply it by a vector.

SRFT and SRHT multipliers $B$ are {\em universal}, like Gaussian ones: 
 Algorithm \ref{alg1} applied with such a multiplier is likely to 
approximate 
closely 
a matrix $M$ having numerical rank at most $r$, 
although the estimated failure probability 
$3\exp(-p)$, for $p=l-r\ge 4$ with  Gaussian multipliers
 increases to order of $1/l$ in the case of
SRFT and SRHT  multipliers (cf. 
\cite[Theorems 10.9 and 11.1]{HMT11}, \cite[Section 5.3.2]{M11}, and
 \cite{T11}). 

Empirically  Algorithm   \ref{alg1} 
with SRFT  multipliers 
fails very rarely even for 
$l=r+20$, 
although for some special input matrices $M$
it is likely to fail 
if $l=o(r\log (r))$ 
(cf. 
 \cite[Remark 11.2]
{HMT11} or  \cite[Section 5.3.2]
{M11}). 
%------------------------------------------------------------------------------
Researchers have consistently observed
similar empirical behavior 
of the algorithm  
 applied with SRHT and various other
 multipliers
(see \cite{HMT11}, \cite{M11}, \cite{W14}, \cite{PQY15},
and the references therein),\footnote{In view of part (i) of Theorem \ref{thbd},
the results cited  for the classes of SRFT and SRHT
matrices also hold for the products of these classes with any unitary matrix,
in particular for the class of 
 $n\times l$ submatrices
of an $n\times n$ circulant matrix, each made up of
  $l$ randomly chosen  columns
(see  
Remark \ref{resrft}).}
 but
so far no adequate formal support for that  empirical observation
has 
appeared in the huge bibliography 
on this highly popular subject.

% - - - - - - - - - - - - - - - - - - - - - - - - - - - - - - - - - - - - -

\subsection{\bf Our goals, our dual theorem, and its implications}\label{sglsdl}

%------------------------------------------------------------------------------

In this paper we are going to

(i) fill the void in the bibliography by supplying a missing
{\em formal support} for the cited observation, 
with far reaching implications (see parts (ii) and (iv) below), 

 (ii) define  {\em new more efficient policies of generation and application of
%sparse and structured 
multipliers}  for low-rank approximation,
% based on such formal results,   

(iii) {\em test our policies numerically}, and 

(iv) {\em extend our progress} 
to other important areas of matrix  computations.

% - - - - - - - - - - - - - - - - - - - - - - - - - - - - - - - - - - - - -
\medskip

Our {\em Dual} Theorem \ref{th0d} below  
%Algorithm \ref{alg1}
%succeeds for average input matrix $M$ and thus in a sense for most
%of inputs {\em if a 
%multiplier  $B$ is well-conditioned and has full rank}. This {\em dual} theorem 
reverses the assumptions of our {\em Primal} Theorem \ref{th0}
that a multiplier $B$ is Gaussian, 
while a matrix $M$ 
is fixed. 

\begin{theorem}\label{th0d} 
Let $M-E\in \mathcal G_{m,n,r}$ and 
 $||E||_2\approx 0$ (in which case 
$\nrank (M)\le r$ and,  with a probability close to 1,
 $\nrank (M)=r$). Furthermore let
$B\in \mathbb R^{n\times l}$  and  
$\nrank (B)=l$. Then \\
(i)  Algorithm \ref{alg1}
outputs a rank-$r$ representation of a matrix $M$ 
with probability 1
%(see Section \ref{slrrp})
if $E=0$ and if $l=r$, and \\
(ii) it 
%is likely to
outputs a rank-$l$ approximation of that matrix 
with a probability close to 1 if $l\ge r$
and approaching 1 fast as the integer $l$ increases from $r+1$.
\end{theorem}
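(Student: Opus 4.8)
The plan is to exploit the "duality" between the Primal Theorem \ref{th0} and the Dual Theorem \ref{th0d}: instead of proving the statement from scratch, I would reduce it to the Primal setting by a transposition/symmetry argument combined with the factorization $M - E = UV$ with $U \in \mathcal G^{m\times r}$, $V \in \mathcal G^{r\times n}$. The key observation is that the product $MB$ that Algorithm \ref{alg1} forms at Stage 1 satisfies $MB = UVB + EB$, and $VB$ is an $r \times l$ matrix whose distribution, for a \emph{fixed} full-rank $B$ with $\nrank(B) = l$, can be analyzed: each row of $V$ is an i.i.d.\ Gaussian vector in $\mathbb R^n$, so each row of $VB$ is its image under the fixed linear map $B$. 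Since $B$ has full column rank $l$ (hence $l \le n$), the $r \times l$ matrix $VB$ has rank $\min\{r,l\}$ with probability $1$, and moreover it is well-conditioned with probability approaching $1$ as $l$ grows — here I would invoke the standard nondegeneracy-of-Gaussian-matrices result (the analogue of Theorem \ref{thrnd} referenced in the excerpt) together with the known tail bounds on the smallest singular value of a Gaussian-type matrix.

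For part (i), assume $E = 0$ and $l = r$. Then $M = UV$ has rank exactly $r$ with probability $1$ (by the cited Theorem \ref{thrnd}), so $\nrank(M) = \rank(M) = r$, and $MB = U(VB)$ with $VB$ an $r\times r$ matrix that is nonsingular with probability $1$. Hence $\rank(MB) = r$, the columns of $MB$ span the same $r$-dimensional space as the columns of $U$, i.e.\ $\range(MB) = \range(U) = \range(M)$. Then by Theorem \ref{thall} (with $\tau$ interpreted in the exact-rank sense, $\tau = 0$) Algorithm \ref{alg1} outputs SUCCESS, and the orthogonalized matrix $U(MB)$ at Stage 2 is a unitary basis of $\range(M)$, so $\tilde M = U(MB)U(MB)^T M = M$. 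This gives the claimed exact rank-$r$ representation with probability $1$.

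For part (ii), assume $l \ge r$ and $\|E\| \approx 0$. Write $MB = UVB + EB$. The first term $UVB$ has rank $r$ with probability $1$ and its smallest nonzero singular value is bounded below by $\sigma_r(U)\,\sigma_r(VB)$; the perturbation term satisfies $\|EB\| \le \|E\|\,\|B\| \approx 0$. The point is to show $\nrank(MB) = r$ — equivalently, that $\sigma_{r}(MB)$ stays safely away from $\sigma_{r+1}(MB)$ — and then, to get a genuine rank-$l$ approximation rather than merely rank-$r$, to argue that the orthogonalization at Stage 2 produces a space close to $\range(U)$, so that $\tilde M = \tilde U \tilde U^T M$ satisfies $\|\tilde M - M\| \le \|\tilde M - UV\| + \|E\| \approx 0$. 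I would make "close" quantitative using a standard $\sin\Theta$-type perturbation bound: the angle between $\range(MB)$ and $\range(U)$ is $O(\|EB\|/(\sigma_r(U)\sigma_r(VB)))$. The probability statement then follows because $\sigma_r(U)$ and $\sigma_r(VB)$ are bounded away from $0$ with probability approaching $1$ as $l$ increases from $r+1$ (their reciprocals have light tails, by the same Gaussian smallest-singular-value estimates used in the proof of the Primal Theorem \ref{th1}), so the failure probability — the event that this ratio is too large, or that $\nrank(MB) \ne r$ — decays at the rate inherited from Theorem \ref{th1}.

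The main obstacle I anticipate is the second half of part (ii): turning "$\nrank(MB) = r$" into a clean rank-$l$ approximation guarantee with an \emph{explicit} and favorable failure-probability rate. Controlling $\sigma_r(VB)$ from below for a fixed, merely full-rank (possibly ill-conditioned) $B$ is delicate — if $B$ itself is badly conditioned, the bound $\sigma_r(VB) \ge \sigma_{\min}(B)\,\sigma_r(V)$ is weak, and one may instead need to use that $VB$ is Gaussian \emph{in the column space of $B^T$}, i.e.\ work with the polar/SVD factorization $B = Q_B R_B$ with $Q_B$ orthogonal, so that $VB = (VQ_B) R_B$ and $VQ_B \in \mathcal G^{r\times l}$ is again genuinely Gaussian. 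That reduction is exactly where part (i) of Theorem \ref{thbd} (the unitary-invariance of the $B$-good class) earns its keep, and I expect the bulk of the work to be in assembling these pieces into the stated probability bound rather than in any single estimate.
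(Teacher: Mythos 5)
Your proposal is correct and follows essentially the same route as the paper: factor $M=UV+E$, use rotational invariance of the Gaussian factor $V$ together with the orthogonal factor of $B$'s SVD so that $VQ_B$ is genuinely Gaussian (the paper's Lemma \ref{lepr3} inside Theorem \ref{thdual}, yielding $||(MB)^+||\le||B^+||\,\nu^+_{r,l}\,||U^+||$), control the failure probability via smallest-singular-value tail bounds (Theorem \ref{thsiguna}), and convert closeness of ranges into closeness of the projections $\tilde M=U(MB)U(MB)^TM$ (the paper packages your $\sin\Theta$ step as the bound on $||U(MB)U(MB)^T-U(M_rB)U(M_rB)^T||$ from \cite[Corollary C.1]{PQY15} in Theorem \ref{thrrap}, and handles the $M=UV+E$ perturbation with Theorem \ref{thpert}). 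The ``main obstacle'' you flag is exactly the content of Theorem \ref{thdual}, and your proposed fix via the polar/SVD factorization of $B$ is the one the paper uses.
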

\begin{proof}
See Theorem \ref{th1d}.
\end{proof}

\begin{corollary}\label{co0d}
     Under the assumptions of Theorem \ref{th0d},
Algorithm \ref{alg11}
is likely to produce a rank-$l$ approximation 
to the matrix $M$ at its first Stage $i$ 
at which  $l^{(i)}\ge r$,
and the probability that this occurs
approaches 1 fast as $l^{(i)}$ increases from $r+1$. 

\end{corollary}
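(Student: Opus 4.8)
The plan is to reduce the corollary to a direct application of Theorem \ref{th0d} together with the structural facts about the multipliers $B^{(i)}$ recorded in Theorem \ref{thbd} and Remark \ref{rewcd}. First I would recall the setup of Algorithm \ref{alg11}: it fixes an $n\times n$ unitary matrix $\widehat B$, partitions it into blocks $(B_1\mid\cdots\mid B_h)$, and at Stage $i$ runs Algorithm \ref{alg1} with the $n\times l^{(i)}$ multiplier $B^{(i)}=(B_1\mid\cdots\mid B_i)$, where $l^{(i)}=\sum_{j=1}^{i}l_j$. Since $\widehat B$ is unitary, every prefix $B^{(i)}$ has orthonormal columns, hence $\nrank(B^{(i)})=l^{(i)}$; more generally, by Remark \ref{rewcd} the same holds, with $\kappa(B^{(i)})\le\kappa(\widehat B)$, when $\widehat B$ is merely nonsingular and well-conditioned. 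Thus each $B^{(i)}$ satisfies the hypothesis $B\in\mathbb R^{n\times l}$ with $\nrank(B)=l$ required by Theorem \ref{th0d}, with $l=l^{(i)}$.

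Next I would invoke Theorem \ref{th0d} with $M-E\in\mathcal G_{m,n,r}$, $\|E\|_2\approx 0$, and multiplier $B=B^{(i)}$ for the smallest index $i$ with $l^{(i)}\ge r$. Part (ii) of that theorem then says that Algorithm \ref{alg1}, run with this multiplier, outputs a rank-$l^{(i)}$ approximation of $M$ with probability close to $1$, and that this probability approaches $1$ fast as $l^{(i)}$ grows past $r+1$; in the special case $E=0$, $l^{(i)}=r$, part (i) gives an exact rank-$r$ representation with probability $1$. By Theorem \ref{thall}, outputting a successful approximation at Stage $i$ is equivalent to $\nrank(MB^{(i)})=r$, i.e.\ to $M$ lying in the ``$B^{(i)}$-good'' class $\mathcal M_{B^{(i)}}$, so Theorem \ref{th0d} is exactly the statement that a factor-Gaussian $M$ (up to the tiny perturbation $E$) lands in $\mathcal M_{B^{(i)}}$ with high probability once $l^{(i)}\ge r$. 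This is precisely the assertion that Algorithm \ref{alg11} is likely to stop and produce a rank-$l^{(i)}$ approximation at its first Stage $i$ with $l^{(i)}\ge r$.

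Finally I would note that the ``monotonicity'' remark — that the algorithm does not need to be re-examined at later stages — is automatic: by part (ii) of Theorem \ref{thbd}, appending columns to a multiplier can only enlarge the good class, so if $M\in\mathcal M_{B^{(i)}}$ then $M\in\mathcal M_{B^{(i')}}$ for all $i'\ge i$; hence the only way the algorithm fails to stop at the first $i$ with $l^{(i)}\ge r$ is the low-probability event that $\nrank(MB^{(i)})<r$ there, which Theorem \ref{th0d}(ii) bounds. The main (and only real) obstacle is the verification that the prefix blocks $B^{(i)}$ genuinely satisfy the hypotheses of Theorem \ref{th0d} — i.e.\ that restricting a unitary (or well-conditioned) matrix to its first $l^{(i)}$ columns preserves full numerical rank with a conditioning bound — but this is exactly what Remark \ref{rewcd} supplies (via \cite[Corollary 8.6.3]{GL13}), so no new work is needed beyond assembling these pieces. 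The probability estimate in the corollary is then inherited verbatim from the $3\exp(-(l^{(i)}-r))$-type bound behind Theorem \ref{th0d}.
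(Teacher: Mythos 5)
Your proposal is correct and follows the same route the paper intends: the corollary is stated as an immediate consequence of Theorem \ref{th0d}, obtained by applying it with $B=B^{(i)}$ and observing (via Remark \ref{rewcd} and \cite[Corollary 8.6.3]{GL13}) that the prefix blocks of a unitary or well-conditioned $\widehat B$ have full numerical rank $l^{(i)}$; your additional use of Theorem \ref{thbd}(ii) for monotonicity is a sensible way to make the ``first Stage $i$'' claim explicit. One small correction: the quantitative rate is not inherited from a $3\exp(-(l^{(i)}-r))$ bound --- that figure appears in Section \ref{scmpgrs} for the \emph{primal} setting with Gaussian multipliers. In the dual setting the rate of approach to $1$ comes from Theorem \ref{th1d}(ii) together with Theorem \ref{thsiguna}, i.e.\ from the bound $\mathbb E(f_d)<e^2\sqrt{8(n-r)l}\,\kappa(B)\frac{r}{(m-r)p}$ with $p=l^{(i)}-r$, which decays like $1/p$ rather than exponentially.
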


% - - - - - - - - - - - - - - - - - - - - - - - - - - - - - - - - - - - - -

%\subsection{\bf Enhancing the efficiency of Algorithm \ref{alg1} 
%based on the duality techniques}\label{simpl}

%------------------------------------------------------------------------------

Part (ii) of Theorem \ref{th0d} implies that 
{\em Algorithm \ref{alg1} succeeds for the average input 
matrix $M$ that has a small numerical rank} $r\le l$
(and thus in a sense to most of such matrices)
if the multiplier $B$ is any unitary  matrix (or even
 any well-conditioned matrix
of full rank) and if  the
 average matrix is defined under the 
Gaussian probability distribution.
The former provision, that $\nrank(B)=l$, is  natural
for otherwise we could have replaced the multiplier $B$
by an $n\times l_-$ matrix for some integer $l_-<l$. 
 The latter  customary provision is natural
in view of the Central Limit Theorem.

For an immediate implication of Theorem \ref{th0d}, 
on the average input $M$ having numerical rank at  most $r$,
Algorithm \ref{alg11} applied with any unitary 
or even  any nonsingular and
well-conditioned $n\times n$ multiplier $B$
outputs SUCCESS 
at its  earliest recursive Stage $i$
at which the dimension $l^{(i)}=\sum_{j=1}^i l_j$ exceeds $r-1$.
This
 can be viewed as {\em derandomization} of 
Algorithms \ref{alg1} and \ref{alg11} 
versus their  application with  Gaussian sampling.
%Furthermore the set of bad inputs $\mathbb C^{m\times n}-M_B$
%is compressed fast as the output dimension $l_i^+$ is increasing above $r+3$.

%------------------------------------------------------------------------------

\subsection{\bf Related work, our novelties, and extension of our progress}\label{sextprg}

%------------------------------------------------------------------------------

Part (ii) of our Theorem   \ref{th0} is
implied by \cite[Theorem 10.8]{HMT11}, but our specific supporting estimates are 
 more compact, cover the case of any $l\ge r$
(whereas \cite{HMT11} assumes that $l\ge r+4$),
and we deduce them by using a shorter proof (see Remark \ref{repfr}).    
Our approach and our results of Section \ref{sglsdl} 
%some of our  techniques 
%and even 
%the simple concept of factor-Gaussian matrices 
are new, and so are our
families of sparse and structured multipliers and the
policies of their generation, combination, and application 
in Sections \ref{smngflr} and \ref{ssprsml} as well.
  
By applying the well-known links, 
we can extend our results for low-rank approximation
to various fundamental problems 
of matrix  computations and data mining and analysis,
but  our duality techniques can be 
extended to other important computational problems as well.
In Section \ref{sext} (Conclusions) we
show such an extension 
to the Least Squares Regression.\footnote{Hereafter we use the acronym 
``LSR".} 
Another extension in \cite{PZa}
supports numerically safe performance of
Gaussian elimination with no pivoting\footnote{Pivoting, 
that is, row or column interchange of an input matrix
for avoiding numerical problems
in Gaussian elimination,   
 is communication intensive and has become 
 the bottleneck of Gaussian elimination 
 in the present day computer environment.
Preprocessing with randomized and derandomized 
multipliers is a natural means for overcoming this problem
(cf.  \cite{BBBDD14},  \cite{PQY15}, \cite{PZa}).}
 and 
block Gaussian elimination.
%In our paper \cite{PZa} we advance
%Gaussian elimination with no pivoting as well as block Gaussian elimination
%by applying techniques similar to the
%ones of this paper.
The extensions
 provide new insights
and new opportunities and should motivate 
further effort and further progress.
 
Extensive decade-long work of a number 
of authors on an alternative approach 
to Low-Rank Approximation
and Least Squares Regression
has culminated in the paper \cite{CW13}.
Its algorithms succeed for 
these problems  with a probability
 at least 4/5,
whereas we only reach
solution for the average input. 
Our study, however,   
 leads to some  benefits, which should 
compensate for  this deficiency.

1. We show the power of a very large class of multipliers,
including various sparse and structured ones.
This can be interesting, e.g., for some special structured inputs
(see Remark \ref{restr}),  
but not only for them. 
Indeed, see our Remark \ref{resprs0}
and compare the following excerpt from \cite{BCDHKS14}:
``The traditional metric for the efficiency of a numerical algorithm has been the number of arithmetic operations it performs. Technological trends have long been reducing the time to perform an arithmetic operation, so it is no longer the bottleneck in many algorithms; rather, communication, or moving data, is the bottleneck".

2. In order to make the probability of failure less than $\delta$, 
the complexity bound of  \cite{CW13} involve overhead 
of order $\log(1/\delta))$, 
which greatly exceeds the overhead in the case of 
our average case estimates.

3.  \cite{CW13} studies the fixed rank problem;
in the case where the input numerical rank 
is not known, our Algorithm \ref{alg11} 
substantially
decreases the computational   overhead 
versus binary search.

4. Unlike \cite{CW13}  we cover the case where  
the ratio $n/r$ is not very large, which 
can still be interesting in some applications.
 
5. Our
  analysis  is quite simple and 
conceptually distinct and should be of independent interest
because it provides 
elusive explanation of 
a well-known empirical phenomenon (cf. Section \ref{sprbpr}). 

% - - - - - - - - - - - - - - - - - - - - - - - - - - - - - - - - - - - - -

\subsection{\bf Organization of the paper}\label{sorg}

%------------------------------------------------------------------------------

We organize our presentation as follows: 

\begin{itemize}
\item%1
In Section \ref{smngflr}
 we describe our policies for
management of the rare failures of Algorithm \ref{alg1}
and  amend Algorithm \ref{alg11}.
\item%2
In Section \ref{ssprsml} we present some efficient sparse and structured multipliers 
for low-rank approximation.
\item%3
In Section \ref{sprf} we prove
 Theorems \ref{thall}, \ref{th0}, and \ref{th0d},
extending their claims with more detailed estimates. 
\item%4
Section \ref{ststs}
(the contribution of the second and the third authors)
covers our numerical tests.
\item%5
In Section \ref{sext} we extend our approach
to the LSR computations.
\item%6
The Appendix covers some auxiliary results
for computations with random matrices. 
%by means of random sampling.

\end{itemize}

%------------------------------------------------------------------------------

\section{Preventing and managing the unlikely failure of Algorithm \ref{alg1}}\label{smngflr} 

%------------------------------------------------------------------------------
%------------------------------------------------------------------------------

%\subsection{Possible failure of Algorithm \ref{alg1}
%in case of sparse inputs and multipliers} \label{schfl}

%-----------------------------------------------------------------------------

{\bf Our conflicting goals and simple recipes.}

We try to decrease:

(i) the cost of the generation of a multiplier $B$ and of the computation of the 
matrix $MB$, 

(ii)  the chances for the failure of Algorithm \ref{alg1}, and

(iii)  the rank of the computed approximation of a matrix $M$.

Towards  goal (i) we propose using
sparse and structured $n\times l$ multipliers 
in the next section.
They are pre-multiplied by a vector and by a matrix $M$ at
a low cost even for $l=n$. 

Towards  goal (ii)  we can expect to succeed 
whenever integer parameter $l$ exceeds $r+1$,
%in the computations that rely on Theorems \ref{th0} or  \ref{th0d} 
%or of order $r\log (r)$
%in the computations that rely on computations with SRFT and SRHT
%multipliers, 
but our chances for success grow fast as $l$ increases.
Such an increase is in conflict with our goal (iii),
but we can alleviate the problem by using the following simple technique.

\medskip 

{\bf Randomized Compression Algorithm} (see Figure 3).
\begin{enumerate}
\item%1
Fix a sufficiently large dimension $l$, which is still much smaller than $\min\{m,n\}$,
generate a sparse and structured   
$n\times l$ multiplier $B$, and compute the $m\times l$ product $MB$ (by
using $(2n-1)ml$ flops).
\item%2
Fix a smaller integer $l_-$ such that $r\le l_-< l$, 
generate a Gaussian $l\times l_-$
multiplier $G$, and compute and output
the $m\times l_-$ matrix $MBG$ (by using $(2l-1)ml_-$ flops, dominated at Stage 1
if $l\le\min\{m,n\}$). 
\end{enumerate}

\begin{figure}[htb] 
\centering
\includegraphics[scale=0.5] {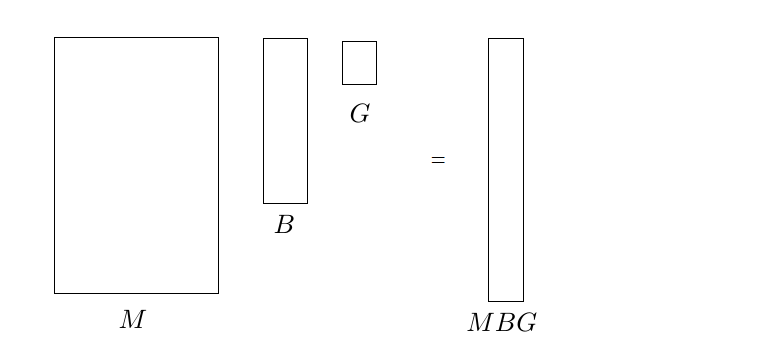}
\label{Fig04}
\caption{Matrices of Algorithm \ref{alg11}}
\end{figure}

By virtue of Theorem \ref{th0} this algorithm is likely
to succeed, but
in our extensive tests in  Section \ref{ststs} even
the following simple  heuristic recipe has always worked.

\medskip 

{\bf Heuristic Compression Algorithm} (linear combination of failed multipliers): 
if the first  $h$ recursive steps of Algorithm \ref{alg11} have failed 
for $h>1$,
then apply  Algorithm \ref{alg1} with a multiplier
$B=\sum_{j=1}^hc_jB_j$ where $c_j=\pm 1$ for all $j$
and for a fixed or random choice of the signs $\pm$.
(More generally, one can choose complex values $c_j$
on the unit circle, letting $|c_j|=1$ for all $j$.)

\begin{remark}\label{respectsts}
In our study above we rely on the results of Theorem \ref{th0d},
which cover the average input matrices. Real computations
 can deal with  ``rare special" 
input matrices $M$, not covered by Theorem \ref{th0d},
but in our tests in Section \ref{ststs}
with a variety
of inputs, our small collection of sparse and structured multipliers 
of the next section
turned out to be powerful enough for handling various important classes of special 
matrices as well. 
\end{remark}

\begin{remark}\label{rereus} {\em Reusing multipliers.}
Recall from 
\cite[Sections 8 and 9]{HMT11} that for all $j$
the matrices
$\tilde M^{(j)}=U^{(j)}U^{(j)T}M$
 and $\tilde M_j=U_jU_j^TM$, for $U^{(j)}=U(MB^{(j)})$,
 $U_j=U(MB_j)$, and $B^{(j)}=(B_1~|~B_2~|~\dots~|~B_j)$,
are the orthogonal projections 
of the matrices 
$MB^{(j)}$ and $MB_j$, 
respectively, onto the range of the matrix $M$.
Hence  
 $M-\tilde M^{(h)}=M-\tilde M^{(h-1)}-\tilde M_h$,
and so  at the $h$-th stage of Algorithm \ref{alg11}, for $h>1$,
we can reuse such projections computed at its Stage $h-1$
rather than recompute them.
\end{remark}

%------------------------------------------------------------------------------

\section{Generation of multipliers. Counting flops and random variables}\label{ssprsml}

%------------------------------------------------------------------------------

In our tests we have consistently succeeded by using multipliers
from a {\em limited family} of very sparse and highly structured
orthogonal matrices of classes 13--17 of Section \ref{s17m}, but 
in this section
we  also cover a greater variety of  other sparse and structured matrices,
 which form an {\em extended family}
of multipliers.

We proceed in the following order.
Given 
%an input $m\times n$ matrix $M$ and
two integers $l$ and $n$,  $l\ll n$, we first generate
four classes of  very sparse primitive $n\times n$ unitary matrices,
then combine them into some basic families of $n\times n$ matrices 
(we denote them $\widehat B$ in this section),
and finally define multipliers  $B$ as  $n\times l$  submatrices 
made up of $l$ columns, which can be fixed (e.g., leftmost) or 
chosen at random. 
The matrix $B$ is unitary if so is the matrix 
$\widehat B$, and more generally
%or in a combination of such matrices (see Section \ref{smngflr}).
 $\kappa(B)\le \kappa(\widehat B)$ 
(cf. \cite[Theorem 8.6.3]{GL13}).     

%------------------------------------------------------------------------------

\subsection{$n\times n$ matrices of four  primitive types}\label{sdfcnd}

%------------------------------------------------------------------------------
  
%Next we describe four matrix primitives of size $n\times n$. 

%------------------------------------------------------------------------------

%$\big (\begin{smallmatrix} 1& ~~1\\ 1&-1\end{smallmatrix}\big )$

\begin{enumerate}
  \item%1
A fixed  or random
 {\em permutation matrix} $P$.
Their block submatrices form the important class of
 CountSketch matrices from
the data stream literature (cf. \cite[Section 2.1]{W14},
\cite{CCF04}, \cite{TZ12}).
\item%2
A {\em diagonal matrix}  $D=\diag(d_i)_{i=0}^{n-1}$, with 
fixed  or random
diagonal entries $d_i$ such that
$|d_i|=1$ for all $i$ (and so all $n$ entries $d_i$ lie 
on the unit circle $\{x:~|z|=1\}$, 
being either nonreal or  $\pm 1$).
\item%3
An $f$-{\em circular shift matrix} 

$$Z_f=\begin{pmatrix}
        0  ~ & \dots&     ~  ~\dots   ~ ~& 0 & ~f\\
        1   & \ddots    &   &    ~ 0 & ~0\\
        \vdots         & \ddots    &   \ddots & ~\vdots &~ \vdots  \\
         \vdots   &    &   \ddots    &~ 0 & ~0 \\
        0   & ~  \dots &      ~ ~~  \dots   ~~ ~ ~ & 1 & ~0 
    \end{pmatrix}
%=\big (\begin{smallmatrix} {\bf 0}^T  & ~f\\I_{n-1}   & ~{\bf 0}\end{smallmatrix}\big )
$$
and its transpose $Z_f^T$ for a scalar $f$ such that either $f=0$ or $|f|=1$.
We write $Z=Z_0$, call $Z$ {\em unit down-shift matrix}, and call the special permutation 
matrix $Z_1$ the
 {\em unit circulant matrix}. 
\item%4
A   $2s\times 2s$ {\em Hadamard  primitive matrix}
%\begin{equation}\label(eqprimhad)
$H^{(2s)}=\big (\begin{smallmatrix} I_s  & ~I_s  \\
I_s   & -I_s\end{smallmatrix}\big )$
%\end{equation}
 for a positive integer $s$
 (cf. \cite{M11},  \cite{W14}).
\end{enumerate}
The latter primitive  $n\times n$ matrices are very sparse,  have
nonzero entries evenly distributed 
throughout,  and can be
pre-multiplied by a vector
by using from 0 to $2n$ flops.

 All our primitive matrices,
 except for the matrix $Z$, are unitary or real orthogonal. Hence,
for  the average input matrix $M$, 
Algorithm \ref{alg1} succeeds with any of their $n\times l$ submatrix $B$
by virtue of  Theorem \ref{th0d},
and similarly with any $n\times l$ submatrix of the matrix $Z$ of full rank $l$.

For specific input matrices the algorithm can fail
with some of our $n\times l$ primitive multipliers $B$
 (e.g., this is  frequently the case 
where both input matrix $M$ and multiplier $B$ 
are  sparse), 
 but in the next subsections 
we 
readily combine primitives 1--4 into
families of $n\times n$ 
sparse and/or structured  matrices,
and in Section \ref{ststs} we
consistently and successfully test
%$\widehat B$
their $n\times l$ submatrices $B$
as multipliers.
   
%------------------------------------------------------------------------------

\subsection{Family (i): 
%sparse ARSPH matrices
multipliers based on the Hadamard and Fourier processes}\label{shad}

%------------------------------------------------------------------------------

  At first we recall the  following recursive definition of  
dense and orthogonal (up to scaling by constants) $n\times n$ matrices $H_n$ 
of {\em Walsh-Hadamard transform} for $n=2^k$
(cf.   \cite[Section 3.1]{M11} and our Remark \ref{recmb}): 
\begin{equation}\label{eqrfd}
H_{2q}=\begin{pmatrix}
H_{q} & H_{q} \\
H_{q} & -H_{q}
  \end{pmatrix}
%H_{2^{k-d}=\begin{pmatrix}
%I_{2^{k-d} & I_{2^{k-d}  \\
%I_{2^{k-d} & -I_{2^{k-d} 
%\end{pmatrix},
\end{equation}
for $q=2^h$, $h=0,1,\dots,k-1$,
  and the Hadamard  primitive matrix
 $H_{2}=H^{(2)}=
\big (\begin{smallmatrix} 1  & ~~1  \\
1   & -1\end{smallmatrix}\big )$  of type 4
%(cf. (\ref{eqprimhad})
 for $s=1$.  
 
For demonstration, here are the matrices $H_4$ and $H_8$
shown with their entries,  
$$H_{4}=\begin{pmatrix}
1  &  1 & 1  &  1 \\
1  & -1  & 1  &  -1 \\ 
1  &  1 & -1  &  -1 \\
1  & -1  & -1  &  1
\end{pmatrix}~{\rm and}~ H_{8}=\begin{pmatrix}
1  &  1 & 1  &  1 & 1  &  1 & 1  &  1 \\
1  & -1  & 1  &  -1 & 1  & -1  & 1  &  -1\\ 
1  &  1 & -1  &  -1 & 1  & 1  & -1  &  -1\\
1  & -1  & -1  &  1 &  1  & -1  & -1  &  1 \\
1  &  1 & 1  &  1 & -1  &  -1 & -1  &  -1 \\
1  & -1  & 1  &  -1 & -1  &  1 & -1  &  1\\ 
1  &  1 & -1  &  -1 & -1  &  -1 & 1  &  1\\
1  & -1  & -1  &  1 & -1  &  1 & 1  &  -1
\end{pmatrix},$$
 but for larger dimensions $n$, 
recursive representation (\ref{eqrfd}) enables much faster 
 pre-multiplication of a matrix $H_{n}$ by a vector, namely it is sufficient
to use $nk$ additions
and subtractions for $n=2^k$.

Next we sparsify this matrix by defining it by a 
 shorter recursive process, that is, 
by fixing a  {\em recursion depth} $d$, $1\le d<k$, and  applying equation (\ref{eqrfd}) where
$q=2^hs$, $h=k-d,k-d+1,\dots,k-1$,  and $H_{s}I_s$ for 
$n=2^ds$.
%is the $n\times n$ Hadamard  primitive matrix $H^{(2s)}$ of type 4.
For  two positive integers $d$ and $s$,
 we denote the resulting $n\times n$ matrix
$H_{n,d}$ and  for $1\le d< k$ call it 
 $d$--{\em Abridged Hadamard
 (AH)
  matrix}. 
In particular, 
$$H_{n,1}=\begin{pmatrix}
I_s  &  I_s  \\
I_s  & -I_s  
\end{pmatrix},
{\rm for}~n=2s;~
H_{n,2}=\begin{pmatrix}
I_s  &  I_s & I_s  &  I_s \\
I_s  & -I_s  & I_s  &  -I_s \\ 
I_s  &  I_s & -I_s  &  -I_s \\
I_s  & -I_s  & -I_s  &  I_s
\end{pmatrix},~{\rm for}~n=4s,~{\rm and}$$
$$H_{n,3}=\begin{pmatrix}
I_s  &  I_s & I_s  &  I_s & I_s  &  I_s & I_s  &  I_s \\
I_s  & -I_s  & I_s  &  -I_s & I_s  & -I_s  & I_s  &  -I_s\\ 
I_s  &  I_s & -I_s  &  -I_s & I_s  & I_s  & -I_s  &  -I_s\\
I_s  & -I_s  & -I_s  &  I_s &  I_s  & -I_s  & -I_s  &  I_s \\
I_s  &  I_s & I_s  &  I_s & -I_s  &  -I_s & -I_s  &  -I_s \\
I_s  & -I_s  & I_s  &  -I_s & -I_s  &  I_s & -I_s  &  I_s\\ 
I_s  &  I_s & -I_s  &  -I_s & -I_s  &  -I_s & I_s  &  I_s\\
I_s  & -I_s  & -I_s  &  I_s & -I_s  &  I_s & I_s  &  -I_s
\end{pmatrix},~{\rm for}~n=8s.$$
For a fixed $d$, the matrix  $H_{n,d}$
is still orthogonal up to scaling,
 has $q=2^d$ nonzero entries 
in every row and  column, and hence
 is sparse unless 
$k-d$ is a small integer.

Then again,
for larger dimensions $n$, 
 we can pre-multiply such a  matrix by a vector much faster
if, instead of the representation by its entries, we apply
recursive process (\ref{eqrfd}), which involves just
  $dn$
additions
and subtractions
and allows
highly efficient  
 parallel implementation 
(cf. 
Remark \ref{resprs0}).

We similarly obtain sparse matrices
by shortening a recursive process
of the generation of the $n\times n$ matrix  $\Omega_n$
of {\em discrete Fourier transform (DFT)} at $n$ points,  for $n=2^k$:
\begin{equation}\label{eqdft}
\Omega_n=(\omega_{n}^{ij})_{i,j=0}^{n-1},~{\rm for}~n=2^k~{\rm and~a~primitive}~
n{\rm th~root~of~unity}~\omega_{n}=\exp(2\pi {\bf i}/n),~{\bf i}=\sqrt {-1}.
\end{equation}
In particular
$\Omega_{2}=H^{(2)}=
\big (\begin{smallmatrix} 1  & ~~1  \\
1   & -1\end{smallmatrix}\big )$,
$$\Omega_{4}=\begin{pmatrix}
1  &  1 & 1  &  1 \\
1  & {\bf i}  & -1  &  -{\bf i} \\ 
1  &  -1 & 1  &  -1 \\
1  & -{\bf i}  & -1  &  {\bf i}
\end{pmatrix},~{\rm and}~ \Omega_{8}=\begin{pmatrix}
1  &  1 & 1  &  1 & 1  &  1 & 1  &  1 \\
1 &\omega_{8}&{\bf i}&{\bf i}\omega_{8}&-1& -\omega_{8}& -{\bf i}&-{\bf i}\omega_{8}\\ 
1  &  {\bf i} & -1  &  -{\bf i} & 1  & {\bf i}  & -1  &  -{\bf i}\\
1 &{\bf i}\omega_{8}& -{\bf i}&\omega_{8}& -1 &-{\bf i}\omega_{8}&{\bf i}&-\omega_{8} \\
1  &  -1 & 1  &  -1 & 1  &  -1 & 1  &  -1 \\
1  & -\omega_{8}& {\bf i}&-{\bf i}\omega_{8}& -1&\omega_{8}& -{\bf i}& {\bf i}\omega_{8}\\ 
1  &  - {\bf i} & -1  &   {\bf i} & 1  &  - {\bf i} & -1  &   {\bf i}\\
1  & - {\bf i}\omega_{8}& -{\bf i}&-\omega_{8} & -1&{\bf i}\omega_{8}&{\bf i}& \omega_{8}
\end{pmatrix}.$$

The matrix $\Omega_n$ is unitary up to scaling by $\frac{1}{\sqrt n}$.
We can pre-multiply it by a vector 
by using $1.5nk$ flops, and we can efficiently parallelize this computation
 if, instead of representation by entries, we apply  following recursive representation
(cf. \cite[Section 2.3]{P01}  and our Remark \ref{recmb}):\footnote{This is a representation of FFT, called decimation in frequency (DIF) radix-2 representation. 
Transposition turns it into an alternative
representation of FFT, called
decimation 
in time (DIT) radix-2 representation.}
\begin{equation}\label{eqfd}
\Omega_{2q}=
\widehat P_{2q}
\begin{pmatrix}\Omega_{q}&~~\Omega_{q}\\ 
\Omega_{q}\widehat D_{q}&-\Omega_{q}\widehat D_{q}\end{pmatrix},~
\widehat D_{q}=\diag(\omega_{n}^{i})_{i=0}^{n-1}.
\end{equation}
Here $\widehat P_{2q}$ is the matrix of odd/even permutations 
 such that 
$\widehat P_{2^{h}}({\bf u})={\bf v}$, ${\bf u}=(u_i)_{i=0}^{2^{h}-1}$, 
${\bf v}=(v_i)_{i=0}^{2^{h}-1}$, $v_j=u_{2j}$, $v_{j+2^{h-1}}=u_{2j+1}$, 
$j=0,1,\ldots,2^{h-1}-1$;
$q=2^h$, $h=0,1,\dots,k$,  and
$\Omega_{1}=(1)$ is the scalar 1.
%$H_{2}=\big (\begin{smallmatrix} 1  & ~~1  \\
%1   & -1\end{smallmatrix}\big ).$

We sparsify this matrix by defining it by a 
 shorter recursive process, that is, 
by fixing a recursion depth $d$, $1\le d<k$,  
replacing $\Omega_{s}$ for $s=n/2^{d}$
by the identity matrix $I_{s}$,
and then  applying equation (\ref{eqfd}) for
$q=2^h$, $h=k-d,k-d+1,\dots,k-1$.

For $1\le d<k$ and $n=2^ds$,   
we denote the resulting $n\times n$ matrix
$\Omega_{n,d}$ and call it 
 $d$-{\em Abridged   Fourier
 (AF)
  matrix}. It is also unitary (up to scaling),
 has $q=2^d$ nonzero entries 
in every row and column, and thus is
  sparse unless 
$k-d$ is a small integer. 
We can represent such a matrix by its entries, 
but then again its
pre-multiplication by a vector involves just  $1.5dn$
flops
and allows
highly efficient  
 parallel implementation
if we rely on recursive 
representation (\ref{eqfd}). 

%$\Omega_{n,1}=$, $\Omega_{n,2}=$, 
%and $\Omega_{n,3}=$.

\medskip

By applying fixed or random permutation and scaling to AH matrices
$H_{n,d}$ and  AF matrices $\Omega_{n,d}$, we obtain the 
families of 
$d$--{\em Abridged Scaled and Permuted 
 Hadamard (ASPH)} matrices, $PDH_n$, and 
$d$--{\em Abridged Scaled and Permuted 
Fourier  (ASPF)} $n\times n$
 matrices, $PD\Omega_n$ where $P$ and $D$ are 
two 
matrices of permutation and diagonal scaling of
primitive classes 1 and 2, respectively. 
Likewise  we define the families of
ASH, ASF, APH, and APF matrices, 
 $DH_{n,d}$, $D\Omega_{n,d}$, $PH_{n,d}$, and  $P\Omega_{n,d}$, respectively.
Each random permutation or scaling  
 contributes up to $n$ random parameters.  

\begin{remark}\label{recmb}
The following equations are equivalent  to (\ref{eqrfd}) and (\ref{eqfd}):
%combinations of our  primitives 1--4:
$$H_{2q}=\diag(H_q,H_q)H^{(2q)}~{\rm and}~\Omega_{2q}=
\widehat P_{2q} \diag(\Omega_{q},\Omega_{q}\widehat D_q)H^{(2q)}$$
where $H^{(2q)}$ denotes a $2q\times 2q$ Hadamard's primitive matrix of type 4.
By extending the latter recursive representation we can 
define matrices that involve more random parameters. Namely we can
 recursively  incorporate random  permutations and diagonal scaling as follows:
\begin{equation}\label{eqhfspd}
\widehat H_{2q}=P_{2q}D_{2q}\diag(\widehat H_q,\widehat H_q)H^{(2q)}~{\rm and}~
\widehat \Omega_{2q}=
P_{2q}D_{2q} \diag(\Omega_{q},\Omega_{q}\widehat D_q)H^{(2q)}.
\end{equation}
Here $P_{2q}$ are $2q\times 2q$ random permutation matrices of primitive class 1
and $D_{2q}$ are $2q\times 2q$ random matrices of diagonal scaling of primitive class 2,
for all $q$. Then again we define $d$--abridged matrices $\widehat H_{n,d}$
and $\widehat \Omega_{n,d}$ by applying only $d$ recursive steps (\ref{eqhfspd})
initiated at the primitive matrix $I_{s}$, for $s=n/2^{d}$.

With these recursive steps we can pre-multiply matrices $\widehat H_{n,d}$
and $\widehat \Omega_{n,d}$ by a vector
by using  at most $2dn$ additions and subtractions
and at most $2.5dn$ flops,
respectively, provided that $2^{d}$ divides
 $n$.
\end{remark}

%------------------------------------------------------------------------------

\subsection{ $f$-circulant, sparse   
$f$-circulant,  and uniformly  sparse matrices}\label{scrcsp}

%------------------------------------------------------------------------------

An
 {\em $f$-circulant matrix}
$$Z_f({\bf v})
=\begin{pmatrix}v_0&fv_{n-1}&\cdots&fv_1\\ v_1&v_0&\ddots&\vdots\\ \vdots&\ddots&\ddots&fv_{n-1}\\ v_{n-1}&\cdots&v_1&v_0\end{pmatrix}=\sum_{i=0}^{n-1}v_iZ_f^i$$
for  
the matrix $Z_f$ of $f$-circular shift,
is defined by a 
scalar $f\neq 0$  and by
the first column ${\bf v}=(v_i)_{i=0}^{n-1}$ and
 is called {\em  circulant} if $f=1$ and {\em skew-circulant} if $f=-1$.
Such a matrix is nonsingular with probability 1 (see Theorem \ref{thrnd}) and
is likely to be well-conditioned \cite{PSZ15}
if  $|f|=1$  and if the vector ${\bf v}$ is Gaussian.

% or is random and uniformly bounded.

\begin{remark}\label{restr}
One can compute the product of an $n\times n$ circulant  matrix with 
an $n\times n$ Toeplitz or Toeplitz-like matrix
by using $O(n\log (n))$ flops (see \cite[Theorem 2.6.4 and Example 4.4.1]{P01}).
\end{remark}

{\bf FAMILY (ii)}  of  {\em sparse} $f$-{\em circulant matrices} 
$\widehat B=Z_f({\bf v})$ is
 defined by a fixed or random scalar $f$, $|f|=1$, and by
the  first column having exactly 
$q$ nonzero entries, for $q\ll n$.
The positions and the values of nonzeros can be
 randomized (and then the matrix would depend on up to $2n+1$ random values).

Such a matrix can be pre-multiplied by a vector by using at most 
$(2q-1)n$ flops  or, in the real case where $f=\pm 1$ and $v_i=\pm 1$
for all $i$, by using at most
$qn$ additions and subtractions. 

The same cost estimates apply in the case of the  generalization 
of $Z_f({\bf v})$  to
a {\em uniformly   
 sparse matrix} with exactly $q$ nonzeros entries, $\pm 1$, 
in every row and in every column for $1\le q\ll n$.
Such a matrix
is the sum $\widehat B=\sum_{i=1}^q\widehat D_iP_i$
for fixed or random matrices  $P_i$  and $\widehat D_i$ of  primitive  types 1 and 2,
respectively.

%------------------------------------------------------------------------------

\subsection{Abridged   
$f$-circulant matrices}\label{scrcabr}

%------------------------------------------------------------------------------

First recall the following well-known expression for a
 $g$-circulant matrix: 
$$Z_g({\bf v})=\sum_{i=0}^{n-1}v_iZ_g^i=D_f^{-1}\Omega_n^HD\Omega_nD_f$$
where $g=f^n$,  $D_f=\diag(f^i)_{i=0}^{n-1}$, ${\bf v}=(v_i)_{i=0}^{n-1}=(\Omega_nD_f)^{-1}{\bf u}$, 
${\bf u}=(u_i)_{i=0}^{n-1}$, and
$D=\diag(u_i)_{i=0}^{n-1}$
(cf. \cite[Theorem 2.6.4]{P01}).
For $f=1$, the expression is simplified: $g=1$, $D_f=I_n$, and
$Z_g({\bf v})=\sum_{i=0}^{n-1}v_iZ_1^i$
is a circulant matrix:
\begin{equation}\label{eqcrcl}
Z_1({\bf v})=\Omega_n^HD\Omega_n,~D=\diag(u_i)_{i=0}^{n-1},~{\rm for}~ 
{\bf u}=(u_i)_{i=0}^{n-1}=\Omega_n{\bf v}.
\end{equation}
Pre-multiplication of an $f$-circulant matrix by a vector 
is reduced to pre-multiplication of each of the matrices $\Omega$
and $\Omega^H$ by a vector and in  addition 
to performing $4n$ flops (or $2n$ flops in case of a circulant matrix).
 This involves $O(n\log (n))$ flops overall
and then again allows highly efficient
 parallel implementation.

For a fixed scalar $f$ and $g=f^n$, we can define the matrix $Z_g({\bf v})$ by 
 any of the two vectors ${\bf u}$ or  ${\bf v}$. 
The matrix is unitary (up to scaling) if $|f|=1$ and if $|u_i|=1$ for all $i$
and is defined by $n+1$ real parameters 
(or by $n$ such parameters for a fixed $f$),
which we can fix or choose at random.

Now suppose that $n=2^ds$, $1\le d<k$, $d$ and $k$ are integers,
and substitute a pair of AF
matrices of recursion length $d$ for two factors $\Omega_n$ in the
above expressions. 
Then the resulting {\em abridged $f$-circulant matrix} $Z_{g,d}({\bf v})$
{\em  of recursion depth} $d$ is still unitary  (up to scaling),
defined by $n+1$ or $n$ parameters $u_i$ and $f$, 
is sparse unless the positive integer $k-d$ is  small,
and can be pre-multiplied by a vector by using $(3d+3)n$ flops.
Instead of AF matrices, we can substitute  a pair of 
ASPF, APF, ASF, AH,
ASPH, APH, or ASF
matrices
for the factors
$\Omega_n$. 
All such matrices form {\bf FAMILY (iii)} of
 $d$--{\em abridged $f$-circulant matrices}.

\begin{remark}\label{resrft}
Recall that an $n\times l$ SRFT and SRHT matrices are the products 
 $\sqrt{n/l}~D\Omega_nR$ and  $\sqrt{n/l}~DH_nR$, respectively,
 where  $H_n$ and $\Omega_n$ 
are the matrices
of (\ref{eqrfd}) and (\ref{eqdft}), $D=\diag(u_i)_{i=0}^{n-1}$, 
$u_i$ are i.i.d. variables uniformly distributed on the circle
$\{u:~|u|=\sqrt{n/l}\}$, and $R$ is the  $n\times l$ 
submatrix  formed by $l$ columns of the identity matrix $I_n$
chosen uniformly at random. Equation (\ref{eqcrcl}) shows that
we can obtain a SRFT matrix by
pre-multiplying a circulant matrix by the matrix $\Omega_n$ and 
post-multiplying it by the above matrix $R$.
\end{remark}

%------------------------------------------------------------------------------

\subsection{Inverses of bidiagonal matrices  
%modified pairs of Householder reflections
}\label{sinvchh}

%------------------------------------------------------------------------------

{\bf FAMILY (iv)}  is formed by the {\em inverses of $n\times n$ bidiagonal matrices}
$$\widehat B=(I_n+DZ)^{-1}~{\rm or}~\widehat B=(I_n+Z^TD)^{-1}$$ for 
%fixed or random
  %$Z=\big (\begin{smallmatrix}  
       %0  & ~0\\
        %I_{n-1}   & ~0 
    %\end{smallmatrix}\big )$ 
% the  $n\times n$ matrix filled with zeros,
%except for the $n-1$ entries of its first subdiagonal,
% filled with ones.
a matrix $D$  of   primitive  type 2 and the down-shift matrix
$Z$. In particular,

 $$\widehat B=(I_n+DZ)^{-1}=\begin{pmatrix}
        ~~1  ~ & ~~0&   \dots &~\dots   ~ ~& 0 & 0\\
       ~~ b_2b_3   &~~ 1    &  0 &   & ~ 0 & ~0\\
        ~~-b_2b_3b_4  & ~~b_3b_4  &  ~~ 1 &   \ddots & ~\vdots &~ \vdots  \\
         \vdots  &  \ddots  &   ~~~  \ddots &   \ddots     &~ \vdots & ~\vdots \\
		&	&	\ddots & \ddots  &	\\
	&	&	& & 1 &0	\\
         %(-1)^n
\pm b_2\cdots b_n  & \dots & \dots &-b_{n-2}b_{n-1}b_n & b_{n-1}b_n & ~1 
    \end{pmatrix}$$ if
$$I_n+DZ=\begin{pmatrix}
        1  ~ & 0&     ~  ~\dots   ~ ~& 0 & ~0\\
        -b_2   & 1    &  \ddots &    ~ 0 & ~0\\
         0 &   -b_3      & \ddots   & ~\vdots &~ \vdots  \\
         \vdots   &    &   \ddots    &~ 1 & ~0 \\
        0   & ~  \dots &      ~ ~~  \dots   ~~ ~ ~&-b_n & ~1 
    \end{pmatrix}.$$
In order to pre-multiply a matrix $\widehat B=(I_n+DZ)^{-1}$ by a vector ${\bf v}$,
however,
we do not compute its entries, but solve the linear system of equations
$(I_n+DZ){\bf x}={\bf v}$ by
using $2n-1$ flops or, in the real case, just $n-1$ additions and subtractions. 

We can randomize the matrix $\widehat B$ 
 by choosing up to $n-1$ random diagonal entries of
the matrix $D$
(whose leading entry  makes no impact on $\widehat B$).

Finally, $||\widehat B||\le \sqrt {n}$ because nonzero entries of the  lower triangular 
matrix $\widehat B=(I_n+DZ)^{-1}$ have absolute values 1,  and
clearly $||\widehat B^{-1}||=||I_n+DZ||\le \sqrt 2$. Hence 
$\kappa(\widehat B)=||\widehat B||~||\widehat B^{-1}||$
 (the spectral condition number of  $\widehat B$) cannot exceed
$\sqrt {2n}$ for $\widehat B=(I_n+DZ)^{-1}$,
and the same bound holds for $\widehat B=(I_n+Z^TD)^{-1}$.
 
%------------------------------------------------------------------------------

\subsection{Summary of estimated numbers of flops and random variables involved}\label{sflprnd}

%------------------------------------------------------------------------------

Table \ref{tabmlt}  shows 
upper bounds on 

(a) the numbers of random variables involved into the 
$n\times n$ matrices $\widehat B$
of the four families (i)--(iv)
and 

(b) the numbers of flops for pre-multiplication of such a matrix by
 a vector.\footnote{The asterisks in the table
% \ref{tabmlt} 
show that the matrices 
of families (i) AF, (i) ASPF, and (iii) involve nonreal roots of unity.}   \\
%These data include  $n$ random parameters defining a random  $n\times n$
%permutation  or diagonal matrix.
For comparison, using a  Gaussian $n\times n$  multiplier  involves  $n^2$ random variables 
and $(2n-1)n$ flops. 
%The estimates can change in the transition $\widehat B\rightarrow B$.
%\end{remark}

One can readily extend the estimates to $n\times l$ submatrices $B$ of the matrices
 $\widehat B$.

%------------------------------------------------------------------------------

\begin{table}[ht] 
  \caption{The numbers of random variables and flops}
\label{tabmlt}

  \begin{center}
    \begin{tabular}{|*{8}{c|}}
      \hline
family &  (i) AH & (i) ASPH & (i) AF &  (i) ASPF & (ii) &  (iii)& (iv)  
\\ \hline
random variables & 0  &$2n$ & 0  & $2n$ & $2q+1$  
& $n$ &   $n-1$  
\\\hline
flops complex  &  $dn$ & $(d+1)n$ & $1.5dn$ & $(1.5d+1)n$  & $(2q-1)n$     & $(3d+2)n$ &   $2n-1$
 
\\\hline
 flops in real case & $dn$  & $(d+1)n$  & * & *  & $qn$ &  *    
 &  $n-1$ 
\\\hline

    \end{tabular}
  \end{center}
\end{table}

\begin{remark}\label{resprs0}
Other observations besides flop estimates  can be  decisive.
E.g., a
special recursive structure 
 of  an ARSPH matrix $H_{2^{k},d}$ and 
an ARSPF matrix $\Omega_{2^{k},d}$
allows
highly efficient  
 parallel implementation of  
their pre-multiplication by a vector based on 
Application Specific Integrated Circuits (ASICs) and 
Field-Programmable Gate Arrays (FPGAs), incorporating Butterfly
Circuits \cite{DE}.
\end{remark}

%------------------------------------------------------------------------------

\subsection{Other basic families}\label{sbscfml}

%------------------------------------------------------------------------------

There is a number of other interesting basic matrix families.
According to \cite[Remark 4.6]{HMT11}, ``among the structured random matrices ....
one of the strongest candidates involves sequences of random Givens rotations".
They are dense  unitary matrices
$$\frac{1}{\sqrt n}D_1G_1D_2G_2D_3\Omega_n,$$
for the DFT matrix $\Omega_n$, three random diagonal matrices
$D_1$, $D_2$ and $D_3$ of primitive type  2,
and two chains of Givens rotations  $G_1$ and $G_2$, 
each of the form
$$G(\theta_1,\dots,\theta_{n-1})=P\prod_{i=1}^{n-1}G(i,i+1,\theta_i)$$ for
a  random permutation matrix $P$,
$$G(i,i+1,\theta_i)=\diag(I_{i-1},\big (\begin{smallmatrix} c_i  & s_i\\
-s_i   & c_i\end{smallmatrix}\big ),I_{n-i-1}),~
c_i=\cos \theta_i,~
s_i=\sin \theta_i,~c_i^2+s_i^2=1.$$
Here
$\theta_1,\dots,\theta_{n-1}$ denote
$n-1$ random angles of rotation
 uniformly distributed in the range 
$0\le \phi< 2\pi$.

The DFT factor $\Omega_n$ makes the resulting matrices dense, 
but we can sparsify them
by  replacing that factor by an 
 AF, ASF, APF, or ASPF matrix having recursion depth $d<\log_2(n)$. This would also decrease 
the number of flops  involved in pre-multiplication of such a multiplier by a vector
from order $n\log_2(n)$ to $1.5dn+O(n)$. 

We can turn Givens sequences into distinct candidate families of efficient multipliers 
by replacing 
either or both of the Givens products with sparse matrices of Householder reflections
 matrices  of the form
$I_n-\frac{2{\bf h}{\bf h}^T}{{\bf h}^T{\bf h}}$
for fixed or random sparse
vectors ${\bf h}$ (cf. \cite[Section 5.1]{GL13}).

We  can  obtain a variety of efficient multiplier families 
by properly combining the matrices of basic families (i)--(iv) and 
 the above matrices. We can use just linear  combinations, but
can also apply block representation as in the following    
real  
$2\times 2$ block matrix $\frac{1}{\sqrt n}\begin{pmatrix}
Z_1({\bf u}) & Z_1({\bf v})  \\
Z_1({\bf v}) & -Z_1({\bf u}) 
\end{pmatrix}D$
for two  vectors ${\bf u}$ and ${\bf v}$
and a  matrix $D$ of primitive class 2.

We can define new matrix families by intertwining 
the Hadamard and Fourier recursive steps.

 The reader 
can find other useful families of multipliers
in our Section \ref{ststs}. E.g., according to our tests
in  Section \ref{ststs}, it turned out to be  efficient
to use  nonsingular well-conditioned (rather than unitary)
diagonal factors in the definition of some of our basic matrix families.

%------------------------------------------------------------------------------

\section{Proof of  Theorems \ref{thall}, \ref{th0}, and \ref{th0d}}\label{sprf}

%------------------------------------------------------------------------------

\subsection{Low-rank representation: proof}\label{slrr}

%------------------------------------------------------------------------------

%In this section we prove our basic Theorems \ref{th0} and  \ref{th0d}.
Hereafter $\mathcal R(W)$ denotes the range (column span)
of a matrix $W$.

\begin{theorem}\label{thlrnk} 
(i) For an $m\times n$ input matrix $M$ of rank $r\le n\le m$,
its rank-$r$ representation is given by the products
%\begin{equation}\label{eqverif}
$R(R^TR)^{-1}R^TM=U(R)U(R)^TM$
%\end{equation}
provided that $R$ is an $n\times r$ matrix such that  $\mathcal R(R)=\mathcal R(M)$
and that $U(R)$ is a matrix obtained by means of column orthogonalization of $R$.  

(ii)  $\mathcal R(R)=\mathcal R(M)$,  
for $R=MB$ and an $n\times r$ matrix $B$, 
with probability $1$
if $B$ is Gaussian, and 

(iii) with a probability at least $1-r/|S|$ if an $n\times r$ matrix $B$ 
has i.i.d. random entries sampled 
 uniformly from a finite set $\mathcal S$
of cardinality $|S|$.
\end{theorem}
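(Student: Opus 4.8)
The plan is to prove the three parts in order, with part (i) being a linear-algebra identity, and parts (ii) and (iii) being genericity statements about the random multiplier $B$.

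For part (i), I would argue as follows. Since $\mathcal R(R)=\mathcal R(M)$ and $R$ has full column rank $r$ (its $r$ columns span the $r$-dimensional space $\mathcal R(M)$), the matrix $R^TR$ is an invertible $r\times r$ matrix, so $P:=R(R^TR)^{-1}R^T$ is the orthogonal projector onto $\mathcal R(R)=\mathcal R(M)$. Because every column of $M$ lies in $\mathcal R(M)$, we have $PM=M$, which is exactly a rank-$r$ representation $M=(R(R^TR)^{-1}R^T)M$ with left factor of rank $r$. Next I would verify that column orthogonalization does not change this: if $U(R)$ is obtained from $R$ by orthogonalization (e.g.\ QR), then $R=U(R)C$ for some invertible $r\times r$ matrix $C$ and $U(R)^TU(R)=I_r$, whence $U(R)U(R)^T$ is also the orthogonal projector onto $\mathcal R(R)$; substituting $R=U(R)C$ into $R(R^TR)^{-1}R^T$ and cancelling $C$ and $C^T$ confirms $R(R^TR)^{-1}R^T=U(R)U(R)^T$, and therefore $U(R)U(R)^TM=M$ as well. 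This step is routine.

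For parts (ii) and (iii), the key reduction is: $\mathcal R(MB)=\mathcal R(M)$ holds if and only if $\rank(MB)=r$ (since $\mathcal R(MB)\subseteq\mathcal R(M)$ always, and both would then be $r$-dimensional). So I need to show that the random $n\times r$ matrix $B$ is, with the stated probability, such that $\rank(MB)=r$. Using a compact SVD $M=S\Sigma T^T$ with $S\in\mathbb R^{m\times r}$, $T\in\mathbb R^{n\times r}$, $\Sigma$ invertible $r\times r$, we have $\rank(MB)=\rank(\Sigma T^TB)=\rank(T^TB)$, so it suffices that the $r\times r$ matrix $T^TB$ be nonsingular, i.e.\ $\det(T^TB)\neq 0$. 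Now $\det(T^TB)$ is a polynomial in the entries of $B$ which is not identically zero (choosing $B$ so that its columns complete/hit the row space of $T^T$ gives a nonzero value — concretely, since $T$ has full column rank $r$, there is an $n\times r$ matrix $B_0$ with $T^TB_0=I_r$). For part (ii), when $B$ is Gaussian this polynomial vanishes on a set of Lebesgue measure zero, so $\det(T^TB)\neq 0$ with probability $1$; here I would cite Theorem \ref{thrnd} as referenced in the excerpt. For part (iii), when the entries of $B$ are i.i.d.\ uniform on a finite set $\mathcal S$, I would invoke the Schwartz–Zippel lemma: a nonzero polynomial of total degree $d$ over a domain has at most $d/|S|$ fraction of roots among points with coordinates in $\mathcal S$; since $\det(T^TB)$ has degree $r$ in the entries of $B$, it is nonzero with probability at least $1-r/|S|$.

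The main obstacle, such as it is, is the bookkeeping in the reduction $\rank(MB)=\rank(T^TB)$ and making precise that $\det(T^TB)$ is a genuinely nonzero polynomial of degree $r$ (degree in the entries of $B$, not $2r$), so that Schwartz–Zippel gives the clean bound $1-r/|S|$; one must be careful that the determinant of an $r\times r$ matrix whose entries are linear forms in the variables has total degree exactly $r$. Everything else — the projector identities in part (i) and the measure-zero argument in part (ii) — is standard and can be dispatched quickly.
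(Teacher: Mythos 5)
Your proposal is correct and follows essentially the same route as the paper: part (i) is the standard orthogonal-projector verification that the paper leaves to the reader (citing Stewart), and parts (ii)--(iii) reduce to $\rank(MB)=r$ and then to the nonvanishing of a degree-$r$ determinant polynomial, which is exactly the content of the paper's Theorem \ref{thrnd} (proved there via the same measure-zero and DeMillo--Lipton--Schwartz--Zippel arguments you invoke). Your explicit reduction through the compact SVD to $\det(T^TB)\neq 0$ is in fact slightly more careful than the paper's passing remark that $\rank(MB)=r$ ``if and only if $B$ has full rank,'' but the substance is identical.
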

\begin{proof}
Readily verify part (i) (cf. \cite[pages 60--61]{S98}). Then note that
$\mathcal R(MB)\subseteq\mathcal R(M)$, for an $n\times r$ 
multiplier $B$.
Hence $\mathcal R(MB)=\mathcal R(M)$ if and only if $\rank(MB)=r$,
and therefore if and only if a multiplier $B$ has full rank $r$.

Now parts (ii) and (iii) follow from
 Theorem \ref{thrnd}.
\end{proof}

Parts (i) and (ii)  of Theorem \ref{thlrnk} 
imply parts (i) of Theorems \ref{th0} and  \ref{th0d}.

%------------------------------------------------------------------------------

\subsection{Low-rank approximation: a basic step}\label{slrrlra}

%------------------------------------------------------------------------------

Hereafter 
 $||W||_F=(\sum_{j=1}^{\rho}\sigma_j^2(W))^{1/2} 
%=Tr$(W^HW))^{1/2}$
\le \sqrt n~||W||$ 
denotes the {\em Frobenius norm} of a matrix $W$ and
 $W^+=T_{W,\rho}\Sigma_{W,\rho}^{-1}S^T_{W,\rho}$ 
denotes the  {\em Moore--Penrose pseudo
inverse} of a matrix $W$ of rank $\rho$ having compact SVD 
$S_{W,\rho}\Sigma_{W,\rho}T_{W,\rho}^T$. 
(Note that $||W^+||=\frac{1}{\sigma_{\rho}(W)}$.)

In our proofs of  Theorems  \ref{thall},
 \ref{th0}, and  \ref{th0d} we rely on the following 
lemma and theorem.

%------------------------------------------------------------------------------

\begin{lemma}\label{letrnc} (Cf. \cite[Theorem 2.4.8]{GL13}.)
For an integer $r$ and an $m\times n$ matrix $M$ where $m\ge n>r>0$,
set to 0
the singular values $\sigma_j(M)$,  for $j>r$,
 let $M_r$ denote the resulting matrix, which is a closest rank-$r$
approximation of $M$, and write 
$M=M_r+E.$
Then
%------------------------------------------------------------------------------
%\begin{equation}\label{eqmr} 
$$||E||=\sigma_{r+1}(M)~
{\rm and}~||E||_F^2=\sum_{j=r+1}^n\sigma_j^2\le \sigma_{r+1}(M)^2(n-r).$$
%\end{equation}
\end{lemma}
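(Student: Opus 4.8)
The plan is to reduce the whole statement to the singular value decomposition of $M$ together with the unitary invariance of the spectral and Frobenius norms. Write $M=S\Sigma T^T$ with $S$ and $T$ having orthonormal columns (equivalently $S^TS=I_n$ and $T$ an $n\times n$ orthogonal matrix) and $\Sigma=\diag(\sigma_j(M))_{j=1}^n$, where $\sigma_1(M)\ge\sigma_2(M)\ge\cdots\ge\sigma_n(M)\ge 0$. By the definition of $M_r$ we have $M_r=S\Sigma_rT^T$ with $\Sigma_r=\diag(\sigma_1(M),\dots,\sigma_r(M),0,\dots,0)$, so that
$$E=M-M_r=S(\Sigma-\Sigma_r)T^T,\qquad \Sigma-\Sigma_r=\diag(0,\dots,0,\sigma_{r+1}(M),\dots,\sigma_n(M)).$$

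First I would read off the spectral norm: pre- and post-multiplication by matrices with orthonormal columns preserves $||\cdot||$, so $||E||=||\Sigma-\Sigma_r||$, and the spectral norm of a diagonal matrix is the largest absolute value of its entries, which here equals $\sigma_{r+1}(M)$ precisely because the singular values are indexed in non-increasing order. Next, the same invariance for the Frobenius norm gives $||E||_F^2=||\Sigma-\Sigma_r||_F^2=\sum_{j=r+1}^n\sigma_j(M)^2$, the claimed middle quantity. The last inequality is then immediate: the sum has exactly $n-r$ terms and $\sigma_j(M)\le\sigma_{r+1}(M)$ for every $j\ge r+1$, whence $\sum_{j=r+1}^n\sigma_j(M)^2\le(n-r)\,\sigma_{r+1}(M)^2$.

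The only assertion that is not pure bookkeeping is that $M_r$ is a \emph{closest} rank-$r$ approximation of $M$ in the spectral (and Frobenius) norm, i.e.\ the Eckart--Young--Mirsky optimality; this is exactly \cite[Theorem 2.4.8]{GL13} and may simply be quoted. If a self-contained argument is preferred, I would note that any rank-$r$ matrix $N$ has a null space of dimension at least $n-r$, hence this null space meets the $(r+1)$-dimensional span of the first $r+1$ right singular vectors of $M$ in a nonzero vector; normalizing it to a unit vector ${\bf x}$ yields $(M-N){\bf x}=M{\bf x}$ with $||M{\bf x}||\ge\sigma_{r+1}(M)$, so $||M-N||\ge\sigma_{r+1}(M)=||E||$, and Mirsky's theorem gives the analogous Frobenius-norm statement. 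I do not expect any genuine obstacle here --- the lemma is classical and the cited reference already covers the optimality --- the one point to keep straight is the non-increasing ordering of the singular values, which is what pins the norm of the discarded block to $\sigma_{r+1}(M)$.
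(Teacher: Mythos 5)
Your proof is correct and is essentially the standard argument behind the classical result that the paper itself does not reprove but simply cites as \cite[Theorem 2.4.8]{GL13}: truncate the SVD, use unitary invariance of the spectral and Frobenius norms to read off $||E||=\sigma_{r+1}(M)$ and $||E||_F^2=\sum_{j>r}\sigma_j(M)^2$, and bound the latter by $(n-r)\sigma_{r+1}(M)^2$ using the non-increasing ordering. Your optional dimension-counting argument for the Eckart--Young optimality is also the standard one and is sound, so there is nothing to correct.
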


%------------------------------------------------------------------------------

\begin{theorem}\label{thrrap} {\rm The error norm 
 in terms of $||(M_rB)^+||$}. 
Assume dealing with the
%------------------------------------------------------------------------------
 matrices $M$ and $\tilde M$  
 of 
%\begin{equation}\label{eqrnrra}
Algorithm  \ref{alg1},
$M_r$ and $E$ of Lemma \ref{letrnc}, 
  and $B\in \mathbb C^{n\times l}$ of rank $l$. 
Let $\rank(M_rB)=r$ and write $E'=EB$ and
$\Delta=||\tilde M-M||$.  Then 
%\medskip
\begin{equation}\label{eqe'} 
||E'||_F\le ||B||_F~||E||_F\le ||B||_F~\sigma_{r+1}(M)~ \sqrt{n-r}
\end{equation}
 and
\begin{equation}\label{eqdltsgm}
|\Delta-
\sigma_{r+1}(M)|\le
 \sqrt 8~||(M_rB)^+||~||E'||_F+O(||E'||_F^2).
\end{equation}
 \end{theorem}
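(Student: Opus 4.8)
\textbf{Proof plan for Theorem \ref{thrrap}.}

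The plan is to establish the two inequalities separately. Inequality \eqref{eqe'} is the easy part: since $E'=EB$, submultiplicativity of the Frobenius norm gives $\|E'\|_F=\|EB\|_F\le\|E\|_F\,\|B\|_F$ (or $\le\|B\|_F\,\|E\|_F$, the order being immaterial), and then Lemma \ref{letrnc} supplies $\|E\|_F\le\sigma_{r+1}(M)\sqrt{n-r}$, which chains to the stated bound. No subtlety here.

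For \eqref{eqdltsgm} I would work with the approximation $\tilde M=U U^T M$ produced by Algorithm \ref{alg1}, where $U=U(MB)$ is the orthogonal factor of $MB$, so $UU^T$ is the orthogonal projector onto $\mathcal R(MB)$. Writing $M=M_r+E$ and $MB=M_rB+E'$, the first step is to compare the projector $UU^T$ onto $\mathcal R(MB)$ with the projector $\Pi$ onto $\mathcal R(M_rB)$; since $\rank(M_rB)=r$ and $\mathcal R(M_rB)\subseteq\mathcal R(M_r)$, in fact $\mathcal R(M_rB)=\mathcal R(M_r)$ (both have dimension $r$), so $\Pi M=M_r$ exactly, giving $\|M-\Pi M\|=\|E\|=\sigma_{r+1}(M)$ by Lemma \ref{letrnc}. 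Thus $\Delta=\|M-\tilde M\|$ differs from $\sigma_{r+1}(M)$ only through the perturbation of the projector caused by replacing $M_rB$ with $M_rB+E'$. The second step is to quote the standard first-order perturbation bound for orthogonal projectors onto column spaces: if $\Pi$ and $\tilde\Pi$ project onto $\mathcal R(A)$ and $\mathcal R(A+F)$ respectively, then $\|\tilde\Pi-\Pi\|\le\sqrt{2}\,\|A^+\|\,\|F\|+O(\|F\|^2)$ (this is the $\sin\Theta$-type estimate; the constant $\sqrt 2$ comes from combining the two one-sided residual bounds, cf.\ \cite[Chapter 5]{GL13}). Applying this with $A=M_rB$, $F=E'$, $\|A^+\|=\|(M_rB)^+\|$, and $\|F\|\le\|E'\|_F$, and then using $\|(\tilde\Pi-\Pi)M\|\le\|\tilde\Pi-\Pi\|\,\|M\|$ together with $\|M\|=\sigma_1(M)$ — or, more carefully, bounding the action on the part of $M$ outside $\mathcal R(M_r)$, which already has norm $\sigma_{r+1}(M)$ — yields $|\Delta-\sigma_{r+1}(M)|\le\sqrt 8\,\|(M_rB)^+\|\,\|E'\|_F+O(\|E'\|_F^2)$, absorbing the factor $\|M\|$ and a numerical constant into the $\sqrt 8=2\sqrt 2$ and the higher-order term.

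The main obstacle is the careful bookkeeping in the perturbation estimate for $\Delta$: one must track how the change in $\mathcal R(M_rB)$ propagates through $M-UU^TM$ and verify that the leading-order coefficient is exactly $\sqrt 8\,\|(M_rB)^+\|$ rather than something larger, which requires splitting $M$ into its projection onto $\mathcal R(M_r)$ (annihilated by the unperturbed projector) and its complement (of norm $\sigma_{r+1}$), and bounding each contribution. Everything else — the submultiplicativity, Lemma \ref{letrnc}, and the identity $\mathcal R(M_rB)=\mathcal R(M_r)$ under the rank hypothesis — is routine.
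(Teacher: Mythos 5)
Your argument reproduces the paper's proof essentially step for step: \eqref{eqe'} by submultiplicativity of the Frobenius norm plus Lemma \ref{letrnc}, the exact identity $U(M_rB)U(M_rB)^TM=M_r$ (via $\mathcal R(M_rB)=\mathcal R(M_r)$ under the rank hypothesis, so the unperturbed projector leaves a residual of norm exactly $\sigma_{r+1}(M)$), and then a first-order perturbation bound for the orthogonal projector onto $\mathcal R(M_rB+E')$ expressed through $\|(M_rB)^+\|$ and $\|E'\|_F$. The only difference is that the paper imports that perturbation estimate, with the constant $\sqrt 8$ already built in, from \cite[Corollary C.1]{PQY15} rather than assembling it from a generic $\sin\Theta$-type bound, and the looseness you flag about absorbing the factor $\|M\|$ in the final combination is present in the paper's own concluding step as well.
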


%------------------------------------------------------------------------------

\begin{proof}
Lemma \ref{letrnc} implies  bound (\ref{eqe'}). 

Next apply part (i) of
Theorem \ref{thlrnk} for matrix $M_r$ replacing $M$,
recall  that $\rank(M_rB)=l$, and obtain
$$U(M_rB)U(M_rB)^TM_r=M_r,~
\mathcal R(U(M_rB))=\mathcal R(M_rB)=\mathcal R(M_r).$$ 
Furthermore $U(M_rB)^T(M-M_r)=O_{n,n}$.
Therefore $$U(M_rB)U(M_rB)^TM=U(M_rB)U(M_rB)^TM_r=M_r.$$

Consequently,  
%with probability 1,
 $M-U(M_rB)U(M_rB)^TM=M-M_r=E$, and  so
(cf. Lemma \ref{letrnc})
\begin{equation}\label{eqrrnm0}
||M-U(M_rB)U(M_rB)^TM||=\sigma_{r+1}(M).
\end{equation}

%------------------------------------------------------------------------------

\noindent Apply  \cite[Corollary C.1]{PQY15}, for 
$A=M_rB$  and $E$ replaced by $E'=(M-M_r)B$, and obtain 
%\begin{equation}\label{eqrra}
$$||U(MB)U(MB)^T-U(M_rB)U(M_rB)^T||\le
\sqrt 8||(M_rB)^+||~||E'||_F+O(||E'||_F^2).$$
%\end{equation}

Combine this 
 bound  with
%equation 
(\ref{eqrrnm0}) and
obtain (\ref{eqdltsgm}).
\end{proof}
 
%------------------------------------------------------------------------------

%\subsection{Assessing some immediate implications}\label{simim}

%------------------------------------------------------------------------------

By combining bounds (\ref{eqe'}) and (\ref{eqdltsgm}) obtain

%------------------------------------------------------------------------------

\begin{equation}\label{eqdlt}
|\Delta-\sigma_{r+1}(M)|\le 
\sqrt {8(n-r)}~\sigma_{r+1}(M)~||B||_F||(M_rB)^+||+O(\sigma_{r+1}^2(M)).
\end{equation}
In our applications the value $\sqrt {8(n-r)}~\sigma_{r+1}(M)||B||_F$ is small,  
 and so the value $|\Delta-\sigma_{r+1}(M)|$ 
is  small unless the  norm $||(M_rB)^+||$ is large.
 
%------------------------------------------------------------------------------

\subsection{Proof of Theorem \ref{thall}}\label{sprth1}

%------------------------------------------------------------------------------

If $\rank(MB)=r_-<r$, then $\rank(\tilde M)\le r_-<r$,
$\Delta\ge \sigma_{r_-}(M)$. In this case $\Delta$ is not small because $\nrank(M)=r>r_-$,
and so Algorithm \ref{alg1}
applied to $M$ with the multiplier $B$ outputs FAILURE.

If $\rank(MB)=r>\nrank(MB)=r_-$, 
then $\rank(MB-E)=r_-<r$ for a small-norm perturbation matrix $E$.
Hence $\Delta\ge \sigma_{r_-}(M)-O(||E||)$,
and then again Algorithm \ref{alg1}
applied to $M$ with the multiplier $B$ outputs FAILURE.
This proves the ``only if" part of the claim of Theorem \ref{thall}.

Now let $\nrank(MB)=r$ and assume that we scale the matrix $B$
so that $||B||_F=1$.
Then $\rank(MB)=r$ (and so we can apply bound (\ref{eqdlt})),
and furthermore $\nrank(M_rB)=\nrank(MB)=r$.
Equation (\ref{eqdlt}) implies that 
$\Delta\approx 8\sqrt{8(n-r)}\sigma_{r+1}||(M_rB)^+||$.
Therefore $\Delta$ is a small positive value because $\nrank (M)=r$.
Thus the value $|\sigma_{r+1}|$ is small,
and part ``if" of Theorem \ref{thall} follows.

%------------------------------------------------------------------------------

\subsection{Detailed estimates for primal and dual low-rank approximation}\label{slra}

%------------------------------------------------------------------------------

The following theorem, proven in the next subsection,   
bounds  
the approximation errors and the 
probability of success of Algorithm \ref{alg1} for $B\in \mathcal G^{n\times l}$.
Together these bounds imply part (ii) of Theorem \ref{th0}. 

\begin{theorem}\label{th1} 
%{\rm The Power of Gaussian random sampling.} 
Suppose that  Algorithm \ref{alg1}  has been applied to 
an $m\times n$ matrix $M$ having numerical rank $r$ and
 that the multiplier $B$ is an $n\times l$ Gaussian matrix.

%------------------------------------------------------------------------------

(i) Then the algorithm outputs an approximation $\tilde M$ of a matrix $M$ 
 by a rank-$l$ matrix within the  error norm bound
  $\Delta$ such that 
$|\Delta-\sigma_{r+1}(M)|\le f\sigma_{r+1}(M)/\sigma_r(M)+O(\sigma_{r+1}^2(M))$ where 
$f=\sqrt {8(n-r)}~\nu_{F,n,l}\nu^+_{r,l}$
and
$\nu_{F,n,l}$ and $\nu^+_{r,l}$
are random variables 
of Definition \ref{defnrm}.

%------------------------------------------------------------------------------

(ii) $\mathbb E(f)<(1+\sqrt n+\sqrt l)\frac{e}{p}~\sqrt{8(n-r)rl}$, 
for  $p=l-r>0$  and  $e=2.71828\dots$.
\end{theorem}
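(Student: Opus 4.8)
\textbf{Proof plan for Theorem \ref{th1}.}
The plan is to assemble part (i) directly from the general error bound (\ref{eqdlt}) already established, and then to prove part (ii) by controlling in expectation the two random quantities that appear there, namely $\|B\|_F$ and $\|(M_rB)^+\|$ for a Gaussian multiplier $B\in\mathcal G^{n\times l}$. For part (i), start from (\ref{eqdlt}),
\[
|\Delta-\sigma_{r+1}(M)|\le\sqrt{8(n-r)}\,\sigma_{r+1}(M)\,\|B\|_F\,\|(M_rB)^+\|+O(\sigma_{r+1}^2(M)).
\]
Here $M_r$ has rank $r$ with compact SVD $S_{M,r}\Sigma_{M,r}T_{M,r}^T$, so $M_rB=S_{M,r}\Sigma_{M,r}(T_{M,r}^TB)$; since $T_{M,r}^T$ is an $r\times n$ matrix with orthonormal rows and $B$ is Gaussian, $T_{M,r}^TB$ is an $r\times l$ Gaussian matrix $G_{r,l}$ (rotational invariance of the Gaussian law), which has full rank $r$ with probability $1$ by Theorem \ref{thrnd}; hence $\rank(M_rB)=r$ and the hypothesis ``$\rank(M_rB)=r$'' of Theorem \ref{thrrap} holds almost surely. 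Then $\|(M_rB)^+\|\le\|\Sigma_{M,r}^{-1}\|\,\|(T_{M,r}^TB)^+\|=\sigma_r(M)^{-1}\,\|G_{r,l}^+\|$. Writing $\nu_{F,n,l}:=\|G_{n,l}\|_F$ (so $\|B\|_F$ has the law of $\nu_{F,n,l}$) and $\nu^+_{r,l}:=\|G_{r,l}^+\|$ — these being exactly the random variables of Definition \ref{defnrm} in the Appendix — and substituting, one gets $|\Delta-\sigma_{r+1}(M)|\le f\,\sigma_{r+1}(M)/\sigma_r(M)+O(\sigma_{r+1}^2(M))$ with $f=\sqrt{8(n-r)}\,\nu_{F,n,l}\,\nu^+_{r,l}$, which is claim (i). I should also remark that the approximation is by a rank-$l$ matrix because $U=U(MB)$ has $\bar l=l$ columns once all $l$ columns of $MB$ survive the small-norm pruning, which happens almost surely when $B$ is Gaussian and $\nrank(M)=r\le l$.

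For part (ii), the task is to bound $\mathbb E(f)=\sqrt{8(n-r)}\,\mathbb E(\nu_{F,n,l}\,\nu^+_{r,l})$. The two factors are \emph{independent}: $\nu_{F,n,l}$ comes from the full $n\times l$ Gaussian $B$, whereas after the orthogonal reduction only the $r\times l$ block $G_{r,l}=T_{M,r}^TB$ enters $\nu^+_{r,l}$; one must be slightly careful here and instead argue via the standard route — take $\|B\|_F$ from the original Gaussian matrix and $\|(M_rB)^+\|$ bounded by $\sigma_r(M)^{-1}\|G_{r,l}^+\|$ with $G_{r,l}$ an independent-in-distribution $r\times l$ Gaussian — so that $\mathbb E(\nu_{F,n,l}\nu^+_{r,l})=\mathbb E(\nu_{F,n,l})\,\mathbb E(\nu^+_{r,l})$. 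For the first factor, Jensen gives $\mathbb E(\nu_{F,n,l})\le\big(\mathbb E\|G_{n,l}\|_F^2\big)^{1/2}=\sqrt{nl}$, and in fact one can afford the cruder $\mathbb E(\nu_{F,n,l})\le\sqrt n+\sqrt l$-type bound if the Appendix states it; the cleaner $\sqrt{nl}$ suffices for the stated inequality. For the second factor, invoke the known expectation bound for the norm of the pseudo-inverse of a rectangular Gaussian matrix — this is precisely what Definition \ref{defnrm} and the accompanying Appendix lemma record — in the form $\mathbb E(\nu^+_{r,l})\le\frac{e\sqrt r}{p}$ with $p=l-r$ and $e=2.71828\dots$ (the Chen–Dongarra / Edelman-type estimate). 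Multiplying, $\mathbb E(f)\le\sqrt{8(n-r)}\cdot\sqrt{nl}\cdot\frac{e\sqrt r}{p}$; absorbing the $\sqrt n$ from $\sqrt{nl}$ into the factor $(1+\sqrt n+\sqrt l)$ (which dominates $\sqrt n$) yields $\mathbb E(f)<(1+\sqrt n+\sqrt l)\,\frac{e}{p}\,\sqrt{8(n-r)rl}$, as claimed. The small slack built into the factor $(1+\sqrt n+\sqrt l)$ versus the bare $\sqrt n$ is exactly the room one needs to pass from the ``$\sqrt{nl}$'' Frobenius bound (or from an additive $\sqrt n+\sqrt l$ singular-value bound, whichever the Appendix provides) to the clean product form.

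The main obstacle is the bound $\mathbb E(\nu^+_{r,l})\le e\sqrt r/p$ on the expected norm of the Moore–Penrose inverse of an $r\times l$ Gaussian matrix with $p=l-r>0$ — including the subtle point that it must hold for \emph{every} $p\ge1$, not merely $p\ge4$ as in \cite{HMT11}, which is one of the improvements claimed in Section \ref{sextprg}. I expect this to be exactly the content of a lemma in the Appendix (Definition \ref{defnrm} plus the associated tail/expectation estimate for $\nu^+_{r,l}$), so in the write-up I would simply cite it; the remaining work — rotational invariance to reduce to the $r\times l$ block, the almost-sure full-rank statement from Theorem \ref{thrnd}, Jensen for the Frobenius factor, independence of the two factors, and the final arithmetic of absorbing constants — is all routine and short.
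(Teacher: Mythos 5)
Your argument is correct and follows essentially the same route as the paper: part (i) comes from bound (\ref{eqdlt}) together with the estimate $\|(M_rB)^+\|\le\nu_{r,l}^+/\sigma_r(M)$ obtained by rotational invariance of the Gaussian distribution (the paper's Theorem \ref{thprm}), and part (ii) from the expectation bounds collected in the Appendix. The only divergence is cosmetic: you bound $\mathbb E(\nu_{F,n,l})$ by $\sqrt{nl}$ via Jensen, while the paper uses $\|B\|_F\le\sqrt l\,\|B\|$ and $\mathbb E(\nu_{n,l})<1+\sqrt n+\sqrt l$ from Theorem \ref{thsignorm}(ii) --- both yield the stated inequality --- and both you and the paper pass from $\mathbb E(\nu_{F,n,l}\,\nu_{r,l}^+)$ to the product of the two expectations without fully justifying independence of the two factors, which are built from the same matrix $B$.
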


% - - - - - - - - - - - - - - - - - - - - - - - - - - - - - - - - - - - - -

\begin{remark}\label{reopt}
$\sigma_{r+1}(M)$ is the optimal upper bound on the norm $\Delta$, and
the expected value $\mathbb E(f)$ 
is reasonably small even for $p=1$. 
If $p=0$, then $\mathbb E(f)$  is not defined, 
%that is, if  we apply no oversampling,
 but 
the random variable $\Delta$ estimated in Theorem
\ref{th1} is still likely to be reasonably close
to $\sigma_{r+1}(M)$ (cf. part (ii) of Theorem  \ref{thsiguna}).
\end{remark}

%------------------------------------------------------------------------------

In Section \ref{sdlrnd} we  prove the following elaboration upon dual Theorem  \ref{th0d}.

\begin{theorem}\label{th1d}
Suppose that Algorithm \ref{alg1}, applied to
  a small-norm perturbation 
of 
an $m\times n$
 fac\-tor-Gauss\-ian matrix with expected 
 rank $r<m$, uses  
an 
$n\times l$ multiplier $B$ such that $\nrank(B)=l$ and
$l\ge r$. 

%------------------------------------------------------------------------------

(i) Then
 the algorithm outputs  
  a rank-$l$ matrix  $\tilde M$ that
 approximates the matrix $M$ within the  error norm bound
  $\Delta$ such that $|\Delta-\sigma_{r+1}(M)|\le f_d\sigma_{r+1}(M)+O(\sigma_{r+1}^2(M))$, where   
$f_d=\sqrt {8(n-r)l}~\nu^+_{r,l}\nu_{m,r}^+\kappa(B)$,
 $\kappa(B)=||B||~||B^+||$,
% denotes the condition number of the matrix $B$ 
and  
 $\nu_{m,r}^+$ and $\nu^+_{r,l}$
are random variables 
of Definition \ref{defnrm}.

%------------------------------------------------------------------------------

(ii) $\mathbb E(f_d)<e^2\sqrt{8(n-r)l}~\kappa(B)\frac{r}{(m-r)p}$, 
for  $p=l-r>0$  and  $e=2.71828\dots$.
\end{theorem}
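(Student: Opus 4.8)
\medskip
The plan is to mirror the error analysis behind Theorem \ref{th1}, but with the roles of $M$ and $B$ interchanged: now $B$ is a fixed full-rank matrix of known condition number $\kappa(B)=\|B\|\,\|B^+\|$, while $M$ is a small-norm perturbation of a factor-Gaussian matrix $W=UV$ with $U\in\mathcal G^{m\times r}$ and $V\in\mathcal G^{r\times n}$. I would begin from the general estimate (\ref{eqdlt}),
$$|\Delta-\sigma_{r+1}(M)|\le\sqrt{8(n-r)}\,\sigma_{r+1}(M)\,\|B\|_F\,\|(M_rB)^+\|+O(\sigma_{r+1}^2(M)),$$
which applies here because $W$ has rank exactly $r$ with probability $1$ (Theorem \ref{thrnd}), $B$ has full column rank $l\ge r$, and $M$ has a singular value gap at index $r$ (its $r$-th singular value equals $\sigma_r(W)+O(\|E\|)$, bounded away from $0$ with probability close to $1$, whereas $\sigma_{r+1}(M)=O(\|E\|)$), so that $\rank(M_rB)=r$ with probability $1$. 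The task thus reduces to bounding $\|B\|_F\,\|(M_rB)^+\|$; I would dispose of the first factor at once via $\|B\|_F\le\sqrt l\,\|B\|$.

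For the second factor I would first replace $M_r$ by the clean matrix $W$: the gap at index $r$ forces $M_r$ to lie within $O(\sigma_{r+1}(M))$ of $W$ and keeps $\sigma_r(WB)$ bounded away from $0$, so $\|(M_rB)^+\|=\|(WB)^+\|\bigl(1+O(\sigma_{r+1}(M))\bigr)$, the correction feeding only the $O(\sigma_{r+1}^2(M))$ remainder. Since $U$ has full column rank $r$ and $VB$ has full row rank $r$ (both with probability $1$), $(WB)^+=(UVB)^+=(VB)^+U^+$, whence $\|(WB)^+\|\le\|(VB)^+\|\,\|U^+\|$. Here $\|U^+\|=1/\sigma_r(U)=\nu^+_{m,r}$ by definition. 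For $\|(VB)^+\|$ I would invoke rotational invariance of the Gaussian: writing a compact SVD $B=Q_B\Sigma_BP_B^{T}$ with $Q_B$ having orthonormal columns, the rows of $VQ_B$ are i.i.d.\ standard Gaussian, so $VQ_B$ is an $r\times l$ Gaussian matrix and $VB\stackrel{d}{=}G_{r,l}\Sigma_BP_B^{T}$; hence $\sigma_r(VB)=\sigma_r(G_{r,l}\Sigma_B)\ge\sigma_r(G_{r,l})\,\sigma_{\min}(\Sigma_B)=\sigma_r(G_{r,l})/\|B^+\|$, that is $\|(VB)^+\|\le\|B^+\|\,\nu^+_{r,l}$. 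Combining these bounds gives $\|B\|_F\,\|(M_rB)^+\|\le\sqrt l\,\|B\|\,\|B^+\|\,\nu^+_{r,l}\nu^+_{m,r}\bigl(1+O(\sigma_{r+1}(M))\bigr)=\sqrt l\,\kappa(B)\,\nu^+_{r,l}\nu^+_{m,r}\bigl(1+O(\sigma_{r+1}(M))\bigr)$, and substituting into (\ref{eqdlt}) yields part (i) with $f_d=\sqrt{8(n-r)l}\,\nu^+_{r,l}\nu^+_{m,r}\,\kappa(B)$.

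For part (ii) I would exploit that $U$ and $V$ are independent, so $\nu^+_{m,r}$ (a function of $U$) and $\nu^+_{r,l}$ (a function of $VQ_B$, hence of $V$) are independent random variables, while $\sqrt{8(n-r)l}$ and $\kappa(B)$ are deterministic; therefore $\mathbb E(f_d)=\sqrt{8(n-r)l}\,\kappa(B)\,\mathbb E(\nu^+_{r,l})\,\mathbb E(\nu^+_{m,r})$. Inserting the Appendix's expectation bounds for spectral norms of pseudo-inverses of rectangular Gaussian matrices (Definition \ref{defnrm}; analogous to the estimates used for Theorem \ref{th1}(ii)), namely $\mathbb E(\nu^+_{r,l})\le e\sqrt r/p$ with $p=l-r$ and $\mathbb E(\nu^+_{m,r})\le e\sqrt r/(m-r)$, gives $\mathbb E(f_d)\le e^2\sqrt{8(n-r)l}\,\kappa(B)\,r/((m-r)p)$.

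I expect the genuine obstacle to be this first reduction: justifying that passing from the SVD truncation $M_r$ of the perturbed matrix to the pure factor-Gaussian $W=UV$ — and, with it, that $\rank(M_rB)=r$ and that $\|(M_rB)^+\|$ differs from $\|(WB)^+\|$ only by a relative $O(\sigma_{r+1}(M))$ — costs nothing beyond the $O(\sigma_{r+1}^2(M))$ already present in (\ref{eqdlt}). This rests on the singular value gap of $M$ at index $r$, which holds with probability close to $1$ for a small-norm perturbation of a factor-Gaussian matrix, together with standard perturbation bounds for singular subspaces. Once it is in place, the pseudo-inverse factorization $(UVB)^+=(VB)^+U^+$, the distributional identity $VB\stackrel{d}{=}G_{r,l}\Sigma_BP_B^{T}$, and the Gaussian expectation bounds are routine.
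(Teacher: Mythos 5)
Your proposal is correct and follows essentially the same route as the paper: it starts from bound (\ref{eqdlt}), uses $\|B\|_F\le\sqrt l\,\|B\|$, invokes rotational invariance (Lemma \ref{lepr3}) to reduce $\|(M_rB)^+\|$ to $\|B^+\|\,\nu^+_{r,l}\,\nu^+_{m,r}$, handles the perturbation $M=UV+E$ by a standard inverse-perturbation argument (the paper's Theorem \ref{thpert}), and multiplies the expectation bounds of Theorem \ref{thsiguna}(iii) for part (ii). The only cosmetic difference is that you obtain the key inequality via the reverse-order law $(U(VB))^+=(VB)^+U^+$, whereas the paper (Theorem \ref{thdual}) builds the compact SVD of $UVB$ explicitly and bounds $\|F^+\|$ for $F=\Sigma_U G_{r,l}\Sigma_B$; the two derivations are interchangeable.
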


%------------------------------------------------------------------------------

\begin{remark}\label{redl}
The expected value $\mathbb E(\nu_{m,r}^+)=\frac{e\sqrt r}{m-r}$ converges to 0 as $m\rightarrow \infty$
provided that $r\ll m$. 
Consequently the expected value $\mathbb E(\Delta)=\sigma_{r+1}(M)\mathbb E(f_d)$ 
converges to the optimal value $\sigma_{r+1}(M)$
as $\frac{m}{r\sqrt {nl}}\rightarrow \infty$ provided 
that $B$ is a well-conditioned matrix
of full rank and that $1\le r<l\ll n\le m$. 
\end{remark}

%------------------------------------------------------------------------------

\begin{remark}\label{repfr} 
 \cite[Theorem 10.8]{HMT11} also 
estimates the norm $\Delta$, but our estimate in Theorem \ref{th1}, 
in terms of random variables $\nu_{F,n,l}$ and $\nu^+_{r,l}$,
is more compact, and 
our proof is distinct and shorter than one
 in \cite{HMT11}, which involves the proofs of  \cite[Theorems 9.1, 10.4 and 10.6]{HMT11}.
\end{remark}

\begin{remark}\label{reprob}  
By virtue of 
Theorems \ref{thrnd}, $\rank(M_rB)=r$
with probability 1 if the matrix $B$ or $M$ is Gaussian,
which is the case of Theorems \ref{th1} and \ref{th1d},
and under the equation  $\rank(M_rB)=r$ we have proven bound (\ref{eqdlt}).
\end{remark}

%------------------------------------------------------------------------------

\begin{remark}\label{rernlrpr} {\em  The Power Scheme of increasing the output accuracy 
of Algorithm \ref{alg1}.} See \cite{RST09}, \cite{HMST11}.
% at a low  computational cost.}
Define the Power Iterations
$M_i=(M^TM)^iM$, $i=1,2,\dots$. 
Then $\sigma_j(M_i)=(\sigma_j(M))^{2i+1}$
for all $i$ and $j$  \cite[equation (4.5)]{HMT11}. 
Therefore, at a reasonable computational cost, one can
dramatically decrease the ratio
$\frac{\sigma_{r+1}(M)}{\sigma_r(M)}$ and thus decrease 
 the bounds of Theorems \ref{th1}
and \ref{th1d}  accordingly.
\end{remark}

In the next two subsections
 we deduce reasonable bounds on the  norm $||(M_rB)^+||$ in both cases where 
 $M$ is a fixed  matrix and $B$ is a Gaussian matrix 
and where $B$ is fixed  matrix and $M$ is a factor Gaussian matrix 
(cf. Theorems \ref{thprm} and \ref{thdual}).
The bounds imply Theorems \ref{th1} and \ref{th1d}.

%------------------------------------------------------------------------------

\subsection{Primal theorem: completion of the proof}\label{sprrnd}

%------------------------------------------------------------------------------

%We use the auxiliary results of the Appendix and the following simple lemma.
%(cf. items 1.4.7 and 1.4.10).

%------------------------------------------------------------------------------

%\begin{lemma}\label{lepr1} 
%Suppose that  $H$ is an $n\times r$ matrix,
%$\Sigma=\diag(\sigma_i)_{i=1}^{n}$, 
%$\sigma_1\ge \sigma_2\ge \cdots \ge \sigma_n>0$, 
%$\Sigma'=\diag(\sigma_i')_{i=1}^{r}$,
% $\sigma'_1\ge \sigma'_2\ge \cdots \ge \sigma'_r>0$.
%Then 
%$\sigma_{j}(\Sigma H\Sigma')\ge\sigma_{j}(H)\sigma_n \sigma'_r~{\rm for~all}~j$.
%\end{lemma}

%------------------------------------------------------------------------------

\begin{theorem}\label{thprm} 
For $M\in \mathbb R^{m\times n}$,
 $B\in \mathcal G^{m\times l}$,
and $\nu_{r,l}^+$ 
of Definition \ref{defnrm},
it holds that
%(cf. item 1.4.9)
  \begin{equation}\label{eqn+}
||(M_rB)^+||\le \nu_{r,l}^+/\sigma_r(M). 
\end{equation}
\end{theorem}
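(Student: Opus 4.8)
The plan is to reduce the estimate of $||(M_rB)^+||$ to an estimate of the norm of the Moore--Penrose pseudo inverse of an $r\times l$ Gaussian matrix, which by Definition \ref{defnrm} is exactly the random variable $\nu_{r,l}^+$. First I would write the compact SVD $M_r=S_{M_r}\Sigma_{M_r}T_{M_r}^T$, where $S_{M_r}\in\mathbb R^{m\times r}$ and $T_{M_r}\in\mathbb R^{n\times r}$ have orthonormal columns and $\Sigma_{M_r}=\diag(\sigma_j(M))_{j=1}^r$ is nonsingular (I assume $\rank(M)\ge r$, so that $\sigma_r(M)>0$; otherwise the asserted bound is vacuous). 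Then $M_rB=S_{M_r}\Sigma_{M_r}G$ where $G:=T_{M_r}^TB$ is an $r\times l$ matrix. The key structural point is that $G$ is itself Gaussian: since $T_{M_r}$ has orthonormal columns and $B$ is Gaussian, the orthogonal invariance of the Gaussian distribution (an auxiliary fact recorded in the Appendix) shows that $G\in\mathcal G^{r\times l}$; in particular, assuming as in Algorithm \ref{alg1} that $l\ge r$, we get $\rank(G)=r$ with probability $1$ by Theorem \ref{thrnd}, whence $\rank(M_rB)=r$ with probability $1$, which is precisely the hypothesis used in Theorem \ref{thrrap} and noted in Remark \ref{reprob}.

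Next I would pass to singular values. Left multiplication by $S_{M_r}$ changes no singular value because $(S_{M_r}X)^T(S_{M_r}X)=X^TX$, so $\sigma_r(M_rB)=\sigma_r(\Sigma_{M_r}G)$; combined with the identity $||W^+||=1/\sigma_\rho(W)$ for a rank-$\rho$ matrix $W$, this gives $||(M_rB)^+||=1/\sigma_r(\Sigma_{M_r}G)$. It then remains to bound $\sigma_r(\Sigma_{M_r}G)$ from below, and here I would use $\sigma_r(\Sigma_{M_r}G)^2=\lambda_{\min}(\Sigma_{M_r}GG^T\Sigma_{M_r})$ together with the elementary estimate $x^T\Sigma_{M_r}GG^T\Sigma_{M_r}x=(\Sigma_{M_r}x)^TGG^T(\Sigma_{M_r}x)\ge\lambda_{\min}(GG^T)\,||\Sigma_{M_r}x||^2\ge\sigma_r(G)^2\sigma_r(M)^2$, valid for every unit vector $x$, which yields $\sigma_r(\Sigma_{M_r}G)\ge\sigma_r(M)\sigma_r(G)$. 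Therefore $||(M_rB)^+||\le 1/(\sigma_r(M)\sigma_r(G))=||G^+||/\sigma_r(M)=\nu_{r,l}^+/\sigma_r(M)$, which is the claimed bound (\ref{eqn+}). An equivalent, SVD-free route uses the reverse-order factorization $(M_rB)^+=G^+\Sigma_{M_r}^{-1}S_{M_r}^T$, which is legitimate here because $\Sigma_{M_r}$ is nonsingular, $S_{M_r}$ has orthonormal columns, and $G$ has full row rank $r$; then submultiplicativity of the spectral norm, together with $||\Sigma_{M_r}^{-1}||=1/\sigma_r(M)$ and the norm identity $||XS_{M_r}^T||=||X||$, gives the same bound.

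The only genuine obstacle I anticipate is bookkeeping rather than analysis. One must keep the rank hypotheses in force ($l\ge r$, and $\rank(G)=r$ with probability $1$), so that $||W^+||=1/\sigma_r(W)$ and, in the alternative route, the reverse-order law for pseudo inverses actually apply. One must also invoke the orthogonal-invariance lemma with care, so that $||G^+||$ is truly identified with the named random variable $\nu_{r,l}^+$ of Definition \ref{defnrm} rather than merely with the pseudo inverse norm of some unspecified fixed $r\times l$ matrix. Beyond submultiplicativity of the spectral norm, no nontrivial inequality enters.
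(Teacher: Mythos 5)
Your proposal is correct and follows essentially the same route as the paper's proof: take the compact SVD $M_r=S_r\Sigma_rT_r^T$, invoke the rotational invariance of Gaussian matrices (Lemma \ref{lepr3}) to identify $T_r^TB$ with a Gaussian matrix $G_{r,l}$, observe that the orthonormal factor $S_r$ does not affect singular values, and conclude via $||(\Sigma_rG_{r,l})^+||\le||G_{r,l}^+||\,||\Sigma_r^{-1}||=\nu_{r,l}^+/\sigma_r(M)$. Your explicit justification of that last inequality (through the smallest-singular-value estimate, or equivalently the reverse-order law, which is valid here because $\Sigma_r$ is square nonsingular and $G_{r,l}$ has full row rank) is a point the paper passes over more quickly, but the argument is the same.
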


\begin{proof}
Let $M_r=S_r\Sigma_rT_r^T$ be compact SVD.

By applying  Lemma \ref{lepr3}, deduce that $T_r^TB$
is a $r\times l$ Gaussian matrix. 

Denote it $G_{r,l}$ 
and obtain
$M_rB=S_r\Sigma_rT_r^TB=S_r\Sigma_rG_{r,l}$.

Write $H=\Sigma_rG_{r,l}$ and let $H=S_H\Sigma_HT_H^T$ be compact  SVD
where $S_H$ is a $r\times r$ unitary matrix.

It follows that $S=S_rS_H$ is an $m\times r$ unitary matrix.

Hence
$M_rB=S\Sigma_HT_H^T$ and $(M_rB)^+=T_H(\Sigma_H)^+S^T$
are compact SVDs of the matrices $M_rB$ and $(M_rB)^+$, respectively.

Therefore 
$||(M_rB)^+||=||(\Sigma_H)^+||=
||(\Sigma_rG_{r,l})^+||\le ||G_{r,l}^+||~||\Sigma_r^{-1}||$.

Substitute $||G_{r,l}^+||=\nu_{r,l}^+$ and $||\Sigma_r^{-1}||=1/\sigma_r(M)$
and obtain the theorem.
\end{proof}

%------------------------------------------------------------------------------
  
Combine bounds (\ref{eqdlt}), (\ref{eqn+}),  and equation $||B||_F=\nu_{F,n,l}$
and obtain part (i) of Theorem \ref{th1}.
Combine that part with parts (ii) of Theorem \ref{thsignorm} and
(iii) of  Theorem \ref{thsiguna} 
 and obtain part (ii) of Theorem \ref{th1}.

%------------------------------------------------------------------------------
%------------------------------------------------------------------------------

\subsection{ Dual  theorem: completion of the proof}\label{sdlrnd}

%------------------------------------------------------------------------------

\begin{theorem}\label{thdual}
Suppose that   
$U\in \mathbb R^{m\times r}$,
 $V\in \mathcal G^{r\times n}$, $\rank(U)=r\le \min\{m,n\}$,
$M=UV$,
and $B$ is 
a well-conditioned $n\times l$ matrix 
of full rank $l$ such that $m\ge n>l\ge r$
and $||B||_F=1$. 
Then 
 \begin{equation}\label{eqdlt1}
 ||(MB)^+|| \le ||B^+||~\nu_{r,l}^+~||U^+||.
\end{equation}
If in addition  $U\in \mathcal G^{m\times r}$, that is, if
$M$   is an $m\times n$ fac\-tor-Gauss\-ian matrix with expected rank $r$, then
 \begin{equation}\label{eqmb+}
||(MB)^+|| \le ||B^+|| \nu_{m,r}^+~\nu_{r,l}^+.
\end{equation}
\end{theorem}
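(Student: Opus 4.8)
\textbf{Proof proposal for Theorem \ref{thdual}.}

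The plan is to mimic the structure of the proof of the primal Theorem \ref{thprm}, but now exploiting Gaussianity on the left factor rather than in the multiplier. First I would write a compact SVD (or any orthogonal decomposition) of the fixed tall matrix $B$, say $B=S_B\Sigma_BT_B^T$ with $S_B\in\mathbb R^{n\times l}$ having orthonormal columns, $\Sigma_B$ the $l\times l$ diagonal of singular values, and $T_B$ an $l\times l$ orthogonal matrix; since $\rank(B)=l$ these are genuine, and $\|B\|=\sigma_1(B)$, $\|B^+\|=1/\sigma_l(B)$, so $\kappa(B)=\|B\|\,\|B^+\|$. Then $MB=UVB=U(VB)$, and the point is that $VS_B$ is an $r\times l$ Gaussian matrix: $V$ is Gaussian and $S_B$ has orthonormal columns, so by Lemma \ref{lepr3} (Gaussian matrices are invariant under multiplication by unitary/orthogonal matrices with orthonormal columns) $VS_B=:G_{r,l}$ is $r\times l$ Gaussian. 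Hence $VB=G_{r,l}\Sigma_BT_B^T$ and $MB=UG_{r,l}\Sigma_BT_B^T$.

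Next I would peel off the pseudoinverse factor by factor using the standard submultiplicativity bound $\|(XYZ)^+\|\le\|X^+\|\,\|Y^+\|\,\|Z^+\|$, which is valid here because each factor has full rank in the relevant dimension (this is exactly where the hypotheses $\rank(U)=r$, $\nrank(B)=l$, $m\ge n>l\ge r$, and $G_{r,l}$ having full rank $r$ with probability $1$ — by Theorem \ref{thrnd} — come in, so that the composite $MB$ indeed has rank $l$... wait, no, rank $r$; I should be careful that $MB=U\cdot(VB)$ has rank $\min\{r,l\}=r$, consistent with $M_rB$ having rank $r$ as used in Theorem \ref{thrrap}). Concretely: $(MB)^+=(T_B^T)^+\Sigma_B^+G_{r,l}^+U^+$ up to the usual reverse-order subtleties, giving $\|(MB)^+\|\le\|T_B^{-1}\|\,\|\Sigma_B^{-1}\|\,\|G_{r,l}^+\|\,\|U^+\| = 1\cdot\|B^+\|\cdot\nu_{r,l}^+\cdot\|U^+\|$, since $\|\Sigma_B^{-1}\|=\|B^+\|$ and $\|G_{r,l}^+\|=\nu_{r,l}^+$ by Definition \ref{defnrm}. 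That establishes \eqref{eqdlt1}. For \eqref{eqmb+}, when additionally $U\in\mathcal G^{m\times r}$, we simply substitute $\|U^+\|=\nu_{m,r}^+$ (again full rank $r$ with probability $1$ by Theorem \ref{thrnd}), yielding $\|(MB)^+\|\le\|B^+\|\,\nu_{m,r}^+\,\nu_{r,l}^+$.

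The main obstacle I anticipate is getting the reverse-order pseudoinverse manipulation rigorously correct: $(XYZ)^+=Z^+Y^+X^+$ does not hold for arbitrary conformable matrices, only under rank/range conditions (e.g. $X$ full column rank and $Z$ full row rank), so I would instead argue directly at the level of singular values rather than via an algebraic identity. The clean way, paralleling Theorem \ref{thprm}, is: write $M_rB$ (equivalently $MB$ here) $=U H$ with $H=VB$; take compact SVD $H=S_H\Sigma_HT_H^T$ with $S_H$ of size $r\times r$; but $U$ is not orthogonal, so unlike the primal case $US_H$ is not orthogonal and I cannot immediately read off the singular values. So the honest route is the norm inequality $\|(MB)^+\|=\|(U\cdot VB)^+\|\le\|U^+\|\,\|(VB)^+\|$ — valid because $U$ has full column rank $r$ and $VB$ has full row rank $r$, which is precisely the classical condition for $\|(AC)^+\|\le\|A^+\|\,\|C^+\|$ — and then $\|(VB)^+\|=\|(G_{r,l}B^{\,T\text{-part}})^+\|$; here I would again use that $VB=G_{r,l}\Sigma_BT_B^T$ with $T_B$ orthogonal so $\|(VB)^+\|=\|(G_{r,l}\Sigma_B)^+\|\le\|G_{r,l}^+\|\,\|\Sigma_B^{-1}\|=\nu_{r,l}^+\|B^+\|$ (this last step using $G_{r,l}$ full rank and $\Sigma_B$ square nonsingular, which again makes the product-pseudoinverse bound legitimate). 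Assembling these gives \eqref{eqdlt1}; the probability-$1$ full-rank statements for $G_{r,l}$ and for $U$ in the factor-Gaussian case both come from Theorem \ref{thrnd}, and no genuinely new estimate is needed beyond Lemma \ref{lepr3} and careful bookkeeping of ranks.
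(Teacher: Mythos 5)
Your proposal is correct and follows essentially the same route as the paper's proof: both hinge on Lemma \ref{lepr3} applied to $V$ sandwiched between the orthonormal factors coming from the SVD of $B$ (and, in the paper, of $U$), reducing everything to the bound $\|(MB)^+\|\le\|\Sigma_B^{-1}\|\,\|G_{r,l}^+\|\,\|U^+\|$. The only difference is bookkeeping — the paper also takes the compact SVD of $U$ and identifies $\|(MB)^+\|$ exactly with $\|(\Sigma_U G_{r,l}\Sigma_B)^+\|$, whereas you peel off $U$ via the full-rank-factorization reverse-order law; your final singular-value justification of $\|(G_{r,l}\Sigma_B)^+\|\le\|G_{r,l}^+\|\,\|\Sigma_B^{-1}\|$ is in fact slightly more careful than the paper's own statement of that step.
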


\begin{proof}
Combine compact
SVDs  
$U=S_{U} \Sigma_{U}T_{U}^T$ 
and  $B=S_{B} \Sigma_{B}T_{B}^T$
%where  
%$S_{U,r}$ and $T_{U,r}$
%denote the $m\times r$
%and $n\times r$ matrices made up of the first 
%$r$ columns of the orthogonal matrices $S_U$ and  $T_U$, 
%respectively, and
%$\Sigma_{U,r}$ is the $r\times r$ diagonal matrix of  
%the $r$ largest singular values of  the matrix $U$.
and obtain $UVB=
S_{U} \Sigma_{U}T^T_{U}VS_B\Sigma_{B}T_{B}^T$.
Here $U$, $V$, $B$,  $S_U$, $\Sigma_U$,  $T_U$, $S_B$, 
$\Sigma_B$, and $T_B$ are matrices of the sizes $m\times r$, 
$r\times n$, $n\times l$, $m\times r$, $r\times r$, $r\times r$, 
$n\times l$, $l\times l$, and $l\times l$, respectively.

Now observe that $G_{r,l}=T_{U}^TVS_{B}$ is a $r\times l$
Gaussian matrix,
by  virtue of Lemma \ref{lepr3} (since $V$ is a Gaussian matrix).
Therefore $UVB=S_{U} FT_{B}^T$, for $F= \Sigma_{U}G_{r,l}\Sigma_{B}$.
%and  ortho\-gonal matrices $S_{U}$  of size $m\times r$
%and $T_{B}$ of size $l\times l$.  

Let $F=S_F\Sigma_F T_F^T$ denote compact SVD
where $\Sigma_F=\diag (\sigma_j(F))_{j=1}^r$
and $S_F$ and $T_F^T$ are unitary matrices of sizes 
$r\times r$ and $r\times l$,  respectively.

Both products $S_{U}S_F\in \mathbb R^{m\times r}$ and 
$T_F^TT_{B}^T\in \mathbb R^{r\times l}$ are unitary matrices,
 and we obtain compact SVD 
$MB=UVB=S_{MB}\Sigma_{MB}T^T_{MB}$
where $S_{MB}=S_{U}S_F$, $\Sigma_{MB}=\Sigma_F$, 
and $T^T_{MB}=T_F^TT_{B}^T$.
Therefore
  $$||(MB)^+||=||\Sigma^+_{MB}||=||\Sigma^+_F||= ||F^+||.$$
Note that $F^+=\Sigma_B^{-1}G_{r,l}^+\Sigma_U^{-1}$
because $\Sigma_B$ and $\Sigma_V$ are square nonsingular diagonal matrices.
Consequently
$$||(MB)^+||=||F^+||\le ||\Sigma_B^{-1}||~||G_{r,l}^+||~||\Sigma_U^{-1}||=||B^+||\nu_{r,l}^+||U^+||,$$
and (\ref{eqdlt1}) follows.
 \end{proof}

We also need the following result
implied by \cite[Corollary 1.4.19]{S98} for $P= -C^{-1}E$:
\begin{theorem}\label{thpert} 
Suppose $C$ and $C+E$ are two nonsingular matrices of the same size
and $$||C^{-1}E||=\theta<1.$$ Then
%\begin{itemize}
%\item %1
%$||I-(C+E)^{-1}C||  \le \frac{\theta}{1-\theta}$ and
$$\||(C+E)^{-1}-C^{-1}||\le \frac{\theta}{1-\theta}||C^{-1}||;$$
%\item %2
e.g., $\||(C+E)^{-1}-C^{-1}||\le 0.5||C^{-1}||$
if $\theta\le 1/3$.
%\end{itemize}
\end{theorem}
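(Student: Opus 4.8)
Theorem \ref{thpert} is a completely standard perturbation bound for matrix inverses, and the paper already tells us the route: invoke \cite[Corollary 1.4.19]{S98} with $P = -C^{-1}E$. So the plan is to reduce the statement to that cited corollary by a short Neumann-series argument, and then read off the two displayed conclusions.

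First I would write $C + E = C(I + C^{-1}E)$, so that $(C+E)^{-1} = (I + C^{-1}E)^{-1}C^{-1}$ once we know $I + C^{-1}E$ is invertible. Setting $P = -C^{-1}E$, the hypothesis $\|C^{-1}E\| = \theta < 1$ says $\|P\| = \theta < 1$, hence $I - P$ is invertible with $(I-P)^{-1} = \sum_{j\ge 0} P^j$ (the Neumann series converges in operator norm). Therefore $(C+E)^{-1} - C^{-1} = \big((I-P)^{-1} - I\big)C^{-1} = \big(\sum_{j\ge 1}P^j\big)C^{-1}$, and taking norms gives
$$
\|(C+E)^{-1} - C^{-1}\| \le \Big(\sum_{j\ge 1}\|P\|^j\Big)\|C^{-1}\| = \frac{\theta}{1-\theta}\,\|C^{-1}\|,
$$
which is exactly the first assertion; this is the content of \cite[Corollary 1.4.19]{S98}, so I would simply cite it here rather than re-derive the Neumann estimate.

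For the numerical example, I would observe that $t \mapsto t/(1-t)$ is increasing on $[0,1)$, so $\theta \le 1/3$ forces $\theta/(1-\theta) \le (1/3)/(2/3) = 1/2$, giving $\|(C+E)^{-1} - C^{-1}\| \le 0.5\,\|C^{-1}\|$. That closes the proof.

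Honestly there is no real obstacle here — the only thing to be careful about is that the conclusion is about the \emph{difference} of inverses (an absolute perturbation bound in the chosen norm), not a relative one, and that the norm is submultiplicative and the identity has norm $1$, both of which hold for the spectral norm used throughout the paper. The lemma is invoked later only through $\theta \le 1/3$, so the explicit constant $0.5$ is all that is actually needed downstream.
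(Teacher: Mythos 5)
Your proposal is correct and follows the same route as the paper, which simply invokes \cite[Corollary 1.4.19]{S98} with $P=-C^{-1}E$; you merely make explicit the standard Neumann-series computation behind that corollary, and the check that $\theta\le 1/3$ gives the constant $0.5$ is right.
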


Combine (\ref{eqdlt}), (\ref{eqdlt1})
and $||B||_F\le ||B||~\sqrt l$ and  obtain
 Theorem \ref{th1d} provided that
$M$ is a fac\-tor-Gauss\-ian matrix $UV$ with expected rank $r$.
Apply Theorem \ref{thpert} to extend the results 
to the case where  
$M=UV+E$ and the norm $||E||$ is small, completing the proof of 
Theorem \ref{th1d}.

%------------------------------------------------------------------------------

\begin{remark}\label{regnrl} 
If $U\in \mathcal G^{m\times r}$, for $m-r\ge 4$, then
it is likely that
$\nrank(U)=r$ by virtue of Theorem \ref{thsiguna},
and our proof of bound (\ref{eqdlt1}) applies even if  we assume that
$\nrank(U)=r$ rather than $U\in \mathcal G^{m\times r}$.
\end{remark}

%------------------------------------------------------------------------------

\section{Numerical Tests}\label{ststs}

% - - - - - - - - - - - - - - - - - - - - - - - - - - - - - - - - - - - - -

 Numerical experiments have been 
%designed by the first author and have been 
 performed by  Xiaodong Yan for Tables  
\ref{tab67}--\ref{tab614}
and by John Svadlenka and Liang Zhao for the other tables.
The tests have been run by using MATLAB  
 in the Graduate Center of the City University of New York 
on a Dell computer with the Intel Core 2 2.50 GHz processor and 4G memory running 
Windows 7; 
in particular the standard normal distribution function randn of MATLAB
has been applied in order to generate Gaussian matrices.

We calculated the $\xi$-rank, i.e., the number of singular values 
exceeding $\xi$, by applying the MATLAB function "svd()". 
We have set $\xi=10^{-5}$ in Sections \ref{ststssvd} and
 \ref{ststslo} and  $\xi=10^{-6}$
in Section \ref{s17m}.

% - - - - - - - - - - - - - - - - - - - - - - - - - - - - - - - - - - - - -

\subsection{Tests for inputs generated via SVD}\label{ststssvd}

% - - - - - - - - - - - - - - - - - - - - - - - - - - - - - - - - - - - - -

In the tests of this subsection we generated $n\times n$ input matrices $M$  
 by extending the customary recipes of [H02, Section 28.3]. Namely, we first  
generated matrices $S_M$ and $T_M$ 
by means of the orthogonalization of  
$n\times n$ Gaussian matrices. Then we defined
$n\times n$ matrices $M$ by 
their compact SVDs, $M=S_M\Sigma_M T_M^T$,
for $\Sigma_M=\diag(\sigma_j)_{j=1}^n$; 
 $\sigma_j=1/j,~j=1,\dots,r$,
$\sigma_j=10^{-10},~j=r+1,\dots,n$, 
 and  $n=256,
512,
1024$.
(Hence $||M||=1$ and 
$\kappa(M)=||M||~||M^{-1}||=10^{10}$.) 

Table \ref{tab1} shows
the average output error norms $\Delta$ 
over  1000 tests of  Algorithm \ref{alg1} applied to 
these matrices $M$ for 
each pair of $n$ and $r$, 
%with
  $n=256,
512,
1024$, $r=8,32$, and
each of the following three groups of multipliers:
%\begin{enumerate}
%\item%1 
3-AH multipliers, 
%\item%2
3-ASPH  multipliers, both
 defined by 
 Hadamard recursion (\ref{eqfd}),  
 for $d=3$, and 
%\item%3
dense  multipliers $B=B(\pm 1,0)$  
having i.i.d. entries $\pm 1$ and 0,
each value chosen with probability 1/3.
%\end{enumerate}  

%------------------------------------------------------------------------------

\begin{table}[ht] 
  \caption{Error norms for SVD-generated inputs 
and 3-AH, 3-ASPH,  and $B(\pm 1,0)$ multipliers}
\label{tab1}

  \begin{center}
    \begin{tabular}{|*{8}{c|}}
      \hline
$n$ & $r$  &3-AH&3-ASPH& $B(\pm 1,0)$ %&4-BRI 
\\ \hline
256 & 8 & 2.25e-08 & 2.70e-08 & 2.52e-08 
%& 2.49e-08  
\\\hline
256 & 32 &5.95e-08 & 1.47e-07 & 3.19e-08 
%& 4.38e-08 
\\\hline
512 & 8 &4.80e-08 & 2.22e-07 & 4.76e-08 
%& 1.71e-07 
\\\hline
512 & 32 & 6.22e-08 & 8.91e-08 & 6.39e-08 
%& 6.10e-08
\\\hline
1024 & 8 & 5.65e-08 & 2.86e-08 & 1.25e-08
%& 1.45e-07
\\\hline
1024 & 32 & 1.94e-07 & 5.33e-08 & 4.72e-08
% & 1.15e-08
\\\hline
    \end{tabular}
  \end{center}
\end{table}

% - - - - - - - - - - - - - - - - - - - - - - - - - - - - - - - - - - - - -

Tables \ref{tab67}--\ref{tab614} show  the  mean and maximal values 
 of such an error norm in the case of   
(a) real Gaussian multipliers $B$ and dense real Gaussian 
subcirculant multipliers $B$, for $q=n$, each  defined by its first column filled with 
either (b) i.i.d.  Gaussian
variables or (c) random variables
  $\pm 1$. 
Here 
and hereafter in this section we assigned each random 
signs $+$ or $-$ with probability 0.5.

%------------------------------------------------------------------------------

\begin{table}[ht] 
  \caption{Error norms for SVD-generated inputs and
Gaussian multipliers}
\label{tab67}
  \begin{center}
    \begin{tabular}{| c |  c | c |  c |c|}
      \hline
$r$   & $n$ & \bf{mean} & \bf{max} \\ \hline	
8 & 256 & $ 7.54\times 10^{-8} $  &  $ 1.75\times 10^{-5} $    \\ \hline		
8 & 512 &  $ 4.57\times 10^{-8} $  &  $ 5.88\times 10^{-6} $  \\ \hline		
8 & 1024 &  $ 1.03\times 10^{-7} $  &  $ 3.93\times 10^{-5} $    \\ \hline		
32 & 256 & $ 5.41\times 10^{-8} $  &  $ 3.52\times 10^{-6} $     \\ \hline
32 & 512 & $ 1.75\times 10^{-7} $  &  $ 5.57\times 10^{-5} $   \\ \hline		
32 & 1024 & $ 1.79\times 10^{-7} $  &  $ 3.36\times 10^{-5} $    \\ \hline	
    \end{tabular}
  \end{center}
\end{table}

%------------------------------------------------------------------------------

\begin{table}[ht] 
  \caption{Error norms  for SVD-generated inputs  and
Gaussian  subcirculant multipliers}
\label{tab611}
  \begin{center}
    \begin{tabular}{| c | c | c | c | c | c |c|}     \hline
$r$  &  $n$ & \bf{mean} & \bf{max}  \\ \hline
8 & 256 & $ 3.24\times 10^{-8} $  &  $ 2.66\times 10^{-6} $   \\ \hline		
8 & 512 &  $ 5.58\times 10^{-8} $  &  $ 1.14\times 10^{-5} $  \\ \hline		
8 & 1024 &  $ 1.03\times 10^{-7} $  &  $ 1.22\times 10^{-5} $  \\ \hline		
32 & 256 & $ 1.12\times 10^{-7} $  &  $ 3.42\times 10^{-5} $   \\ \hline
32 & 512 & $ 1.38\times 10^{-7} $  &  $ 3.87\times 10^{-5} $   \\ \hline		
32 & 1024 & $ 1.18\times 10^{-7} $  &  $ 1.84\times 10^{-5} $  \\ \hline	
    \end{tabular}
  \end{center}
\end{table}

%------------------------------------------------------------------------------

\begin{table}[ht] 
  \caption{Error norms for SVD-generated inputs and
random subcirculant
  multipliers  filled with $\pm 1$}
\label{tab614}
  \begin{center}
    \begin{tabular}{| c | c | c | c | c | c |c|}     \hline
$r$  &  $n$ & \bf{mean} & \bf{max} \\ \hline			
8 & 256 & $ 7.70\times 10^{-9} $  &  $ 2.21\times 10^{-7} $   \\ \hline		
8 & 512 &  $ 1.10\times 10^{-8} $  &  $ 2.21\times 10^{-7} $  \\ \hline		
8 & 1024 &  $ 1.69\times 10^{-8} $  &  $ 4.15\times 10^{-7} $   \\ \hline		
32 & 256 & $ 1.51\times 10^{-8} $  &  $ 3.05\times 10^{-7} $   \\ \hline
32 & 512 & $ 2.11\times 10^{-8} $  &  $ 3.60\times 10^{-7} $   \\ \hline		
32 & 1024 & $ 3.21\times 10^{-8} $  &  $ 5.61\times 10^{-7} $  \\ \hline		
    \end{tabular}
  \end{center}
\end{table}

%------------------------------------------------------------------------------
 
%\clearpage

Table \ref{LowRkEx} displays  the
average error norms in
the case of multipliers $B$
of eight kinds defined below, all  
generated from the following  Basic Sets 1, 2 and 3
of $n\times n$ multipliers:

{\em Basic Set 1}:  3-APF
multipliers defined by 
three Fourier recursive steps of
 equation (\ref{eqfd}), for $d=3$,
with no scaling, but  
with a random column permutation.

{\em Basic Set 2}: Sparse real circulant matrices $Z_1({}\bf v)$
of family (ii) of  Section \ref{scrcsp} (for $q=10$) 
 having the first column vectors  ${\bf v}$ filled with zeros,
except for  ten random coordinates filled with random integers $\pm 1$.

{\em Basic Set 3}:   Sum of two scaled inverse bidiagonal matrices. 
We first filled the main diagonals of both matrices with the integer 101
 and  their first subdiagonals 
 with $\pm 1$. Then
we  multiplied  each matrix by a  diagonal 
matrix $\diag(\pm 2^{b_i})$, where $b_i$ were random integers
uniformly chosen from 0 to 3.

For multipliers $B$ we used the $n\times r$  western 
(leftmost) blocks of $n\times n$ matrices
of the following classes:
\begin{enumerate}
\item%1
  a matrix from Basic Set 1; 
\item%2
  a matrix from Basic Set 2;
\item%3
 a matrix from Basic Set 3;
\item%4
 the product of two matrices of Basic Set 1;
\item%5
 the product of two matrices of Basic Set 2;
\item%6
 the product of two matrices of Basic Set 3;
\item%7
 the sum of two matrices of Basic Sets 1 and 3,
and 
\item%8
 the sum of two matrices of Basic Sets 2 and 3.
\end{enumerate}
The tests
produced the results similar to the ones of Tables \ref{tab1}--\ref{tab614}.
 
In sum, for all classes of input  pairs $M$ and $B$ and all pairs of integers $n$ and $r$,
Algorithm \ref{alg1} with our preprocessing 
has consistently output approximations to rank-$r$ input matrices with  the
average error norms 
 ranged from $10^{-7}$ or $10^{-8}$ to about $10^{-9}$
in all our tests.

\begin{table}[ht] 
  \caption{Error norms  for SVD-generated inputs  and
multipliers of eight classes}
\label{LowRkEx}
  \begin{center}
    \begin{tabular}{| c |  c | c |  c |c|c|c|c|c|c|}
      \hline
$n$ & $r$ &class 1 &class 2 &class 3 &class 4 &class 5 &class 6 &class 7 &class 8 \\\hline
256 & 8 &5.94e-09 &4.35e-08 &2.64e-08 &2.20e-08 &7.73e-07 &5.15e-09 &4.08e-09 &2.10e-09  \\\hline
256 & 32 &2.40e-08 &2.55e-09 &8.23e-08 &1.58e-08 &4.58e-09 &1.36e-08 &2.26e-09 &8.83e-09 \\\hline 
512 & 8 &1.11e-08 &8.01e-09 &2.36e-09 &7.48e-09 &1.53e-08 &8.15e-09 &1.39e-08 &3.86e-09  \\\hline
512 & 32 &1.61e-08 &4.81e-09 &1.61e-08 &2.83e-09 &2.35e-08 &3.48e-08 &2.25e-08 &1.67e-08\\\hline 
1024 & 8 &5.40e-09 &3.44e-09 &6.82e-08 &4.39e-08 &1.20e-08 &4.44e-09 &2.68e-09 &4.30e-09 \\\hline 
1024 & 32 &2.18e-08 &2.03e-08 &8.72e-08 &2.77e-08 &3.15e-08 &7.99e-09 &9.64e-09 &1.49e-08\\\hline 
    \end{tabular}
  \end{center}
\end{table}

%------------------------------------------------------------------------------
%------------------------------------------------------------------------------

  We summarize the  results of the tests of this subsection for $n=1024$ and $r=8,32$
in Figure \ref{LowRkTest1}.

\begin{figure}[htb] 
\centering
\includegraphics[scale=0.7] {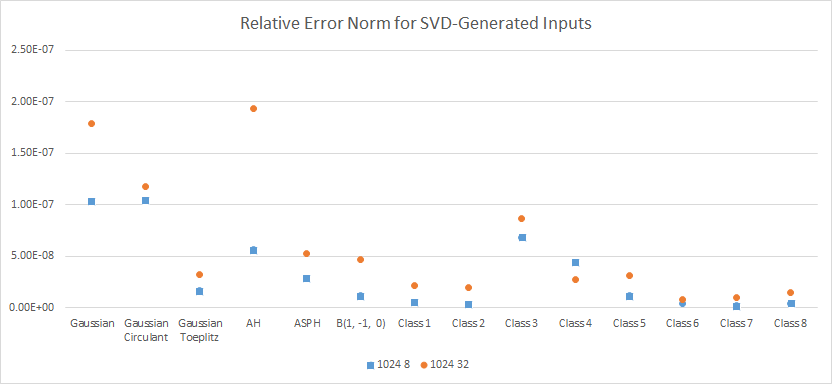}
\caption{Error norms in the tests of Section \ref{ststssvd}}
\label{LowRkTest1}
\end{figure}

%In the tests  average error norm ranged 
%from $10^{-7}$ to $10^{-9}$.
 
%\clearpage

%In the tests  average error norm ranged 
%from $10^{-7}$ to $10^{-9}$.
 
%\clearpage

% - - - - - - - - - - - - - - - - - - - - - - - - - - - - - - - - - - - - -

\subsection{Tests for inputs generated via the discretization of a Laplacian operator
and via the approximation of an inverse finite-difference operator}\label{ststslo}

% - - - - - - - - - - - - - - - - - - - - - - - - - - - - - - - - - - - - -

Next we present the test results for Algorithm \ref{alg1} applied
 to input matrices for computational problems of two kinds,
both replicated from  \cite{HMT11}, namely, the matrices of
 
(i) the discretized single-layer Laplacian operator and 

(ii) the approximation of the inverse of a finite-difference operator.

{\em Input matrices (i).} We considered the Laplacian operator
%\begin{equation}
$[S\sigma](x) = c\int_{\Gamma_1}\log{|x-y|}\sigma(y)dy,x\in\Gamma_2$,
%\end{equation}
from  \cite[Section 7.1]{HMT11},
for two contours $\Gamma_1 = C(0,1)$ and $\Gamma_2 = C(0,2)$  on the complex plane.
Its dscretization defines an $n\times n$ matrix $M=(m_{ij})_{i,j=1}^n$  
where
%\begin{equation}
$m_{i,j} = c\int_{\Gamma_{1,j}}\log|2\omega^i-y|dy$
for a constant $c$ such that  $||M||=1$ and
%\end{equation} 
for the arc $\Gamma_{1,j}$  of the contour $\Gamma_1$ defined by
the angles in the range $[\frac{2j\pi}{n},\frac{2(j+1)\pi}{n}]$.

We applied Algorithm \ref{alg1}
supported by  
 three iterations of the Power Scheme of Remark  \ref{rernlrpr} 
and used with multipliers
 $B$ being the $n\times r$ leftmost submatrices of $n\times n$
matrices of 
the following five  classes: 
\begin{itemize}
\item%1
 Gaussian  multipliers, 
\item%2
 Gaussian Toeplitz  multipliers $T=(t_{i-j})_{i=0}^{n-1}$ 
for i.i.d. Gaussian variables $t_{1-n},\dots,t_{-1}$,$t_0,t_1,\dots,t_{n-1}$.
  \item%3
 Gaussian circulant  multipliers $\sum_{i=0}^{n-1}v_iZ_1^i$, 
for i.i.d. Gaussian variables $v_0,\dots,v_{n-1}$ and the unit circular matrix $Z_1$
of Section \ref{sdfcnd}.  
\item%4
 Abridged permuted  Fourier (3-APF) multipliers, and 
\item%5
 Abridged permuted Hadamard (3-APH) multipliers.
\end{itemize}

As in the previous subsection,
we defined each 
%Abridged Permuted Fourier or Hadamard
 3-APF and 3-APH  matrix by applying
three recursive steps of equation (\ref{eqfd}) followed
by a single random column permutation.

%We have increased the output accuracy by 
%applying the Power Scheme of Remark \ref{rernlrpr}.
 
We applied Algorithm \ref{alg1} with  multipliers of all five listed classes.
For each setting we repeated the test 1000 times and calculated the mean and standard deviation of the error norm $||\tilde M - M||$. 
%Table \ref{ExpHMT1} displays test results for $n$ up to 4000.
  
\begin{table}[h]
\caption{Low-rank approximation  of Laplacian  matrices}
\label{ExpHMT1}
\begin{center}
\begin{tabular}{|*{5}{c|}}
\hline
$n$ 	& multiplier 	& $r$	& mean	& std\\\hline
200 & Gaussian &  3.00 & 1.58e-05 & 1.24e-05\\\hline
200 & Toeplitz &  3.00 & 1.83e-05 & 7.05e-06\\\hline
200 & Circulant &  3.00 & 3.14e-05 & 2.30e-05\\\hline
200 & 3-APF &  3.00 & 8.50e-06 & 5.15e-15\\\hline
200 & 3-APH &  3.00 & 2.18e-05 & 6.48e-14\\\hline
400 & Gaussian &  3.00 & 1.53e-05 & 1.37e-06\\\hline
400 & Toeplitz &  3.00 & 1.82e-05 & 1.59e-05\\\hline
400 & Circulant &  3.00 & 4.37e-05 & 3.94e-05\\\hline
400 & 3-APF &  3.00 & 8.33e-06 & 1.02e-14\\\hline
400 & 3-APH &  3.00 & 2.18e-05 & 9.08e-14\\\hline
2000 & Gaussian &  3.00 & 2.10e-05 & 2.28e-05\\\hline
2000 & Toeplitz &  3.00 & 2.02e-05 & 1.42e-05\\\hline
2000 & Circulant &  3.00 & 6.23e-05 & 7.62e-05\\\hline
2000 & 3-APF &  3.00 & 1.31e-05 & 6.16e-14\\\hline
2000 & 3-APH &  3.00 & 2.11e-05 & 4.49e-12\\\hline
4000 & Gaussian &  3.00 & 2.18e-05 & 3.17e-05\\\hline
4000 & Toeplitz &  3.00 & 2.52e-05 & 3.64e-05\\\hline
4000 & Circulant &  3.00 & 8.98e-05 & 8.27e-05\\\hline
4000 & 3-APF &  3.00 & 5.69e-05 & 1.28e-13\\\hline
4000 & 3-APH &  3.00 & 3.17e-05 & 8.64e-12\\\hline
\end{tabular}
\end{center}
\end{table}

{\em Input matrices (ii).} We similarly applied Algorithm \ref{alg1} to the input matrix $M$ 
being the inverse of a large sparse matrix obtained from a finite-difference operator
of  \cite[Section 7.2]{HMT11} 
and observed  similar results
with all structured  and Gaussian multipliers.

We performed 1000 tests for every class of pairs of $n\times n$ or $m\times n$ matrices 
of classes (i) or (ii), respectively,
and 
$n\times r$ multipliers for every fixed triple of $m$, $n$, and $r$ or pair of $n$ and $r$.

Tables \ref{ExpHMT1} and \ref{ExpHMT2} display the resulting data for the mean values and standard deviation of the error norms, and we summarize the results of the tests of this subsection 
in Figure \ref{LowRkTest2}.

\begin{table}[h]
\caption{Low-rank approximation of the matrices of discretized finite-difference operator}
\label{ExpHMT2}
\begin{center}
\begin{tabular}{|*{6}{c|}}
\hline
$m$ 	& $n$ 	& multiplier 	& $r$	& mean	& std\\\hline
88 & 160 & Gaussian &  5.00 & 1.53e-05 & 1.03e-05\\\hline
88 & 160 & Toeplitz &  5.00 & 1.37e-05 & 1.17e-05\\\hline
88 & 160 & Circulant &  5.00 & 2.79e-05 & 2.33e-05\\\hline
88 & 160 & 3-APF &  5.00 & 4.84e-04 & 2.94e-14\\\hline
88 & 160 & 3-APH &  5.00 & 4.84e-04 & 5.76e-14\\\hline
208 & 400 & Gaussian & 43.00 & 4.02e-05 & 1.05e-05\\\hline
208 & 400 & Toeplitz & 43.00 & 8.19e-05 & 1.63e-05\\\hline
208 & 400 & Circulant & 43.00 & 8.72e-05 & 2.09e-05\\\hline
208 & 400 & 3-APF & 43.00 & 1.24e-04 & 2.40e-13\\\hline
208 & 400 & 3-APH & 43.00 & 1.29e-04 & 4.62e-13\\\hline
408 & 800 & Gaussian & 64.00 & 6.09e-05 & 1.75e-05\\\hline
408 & 800 & Toeplitz & 64.00 & 1.07e-04 & 2.67e-05\\\hline
408 & 800 & Circulant & 64.00 & 1.04e-04 & 2.67e-05\\\hline
408 & 800 & 3-APF & 64.00 & 1.84e-04 & 6.42e-12\\\hline
408 & 800 & 3-APH & 64.00 & 1.38e-04 & 8.65e-12\\\hline
\end{tabular}
\end{center}
\end{table} 

%------------------------------------------------------------------------------

\begin{figure}[htb] 
\centering
\includegraphics[scale=0.7] {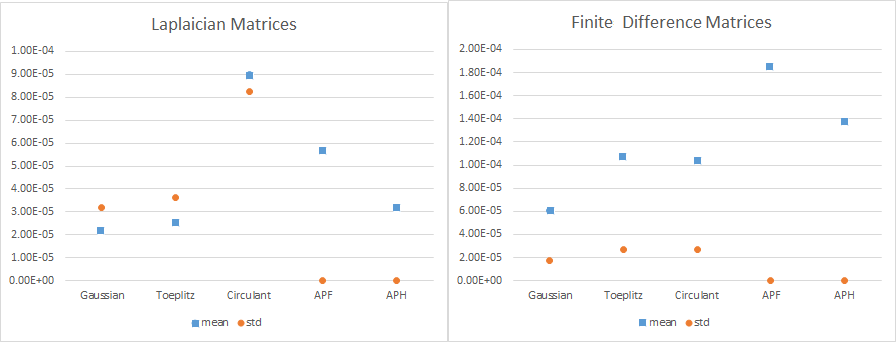}
\caption{Error norms in the tests of Section \ref{ststslo}}
\label{LowRkTest2}
\end{figure}

%\clearpage 

\subsection{Tests with additional classes of multipliers}\label{s17m} 

In this subsection we display the mean values and standard deviations
of the  error norms observed 
when we repeated the tests of the two previous subsections 
for the same three classes of input matrices
 (that is, SVD-generated, Laplacian, and matrices obtained by discretization of 
 finite difference operators), but now we applied Algorithm \ref{alg1} with
  seventeen  additional classes of multipliers (besides its control application with
 Gaussian multipliers). 
 
We tested  Algorithm \ref{alg1} applied to $1024\times 1024$ SVD-generated input matrices having numerical nullity $r = 32$, to $400 \times 400$ Laplacian input matrices
having numerical nullity $r = 3$, 
and
to $408 \times 800$ matrices having numerical nullity $r = 64$ and
representing finite-difference inputs. 

Then again we repeated the tests 1000 times for each class of input matrices and each 
size of an input and a multiplier, and we display the resulting average error norms 
in Table \ref{SuperfastTable} and Figures \ref{SuperfastSVD}--\ref{SuperfastFD}.

We used multipliers defined as the seventeen sums of $n\times r$ matrices
of the following basic families:

\begin{itemize}
  \item%1
  3-ASPH matrices
\item%2
  3-APH matrices
\item%3
  Inverses of bidiagonal matrices 
\item%4
 Random permutation matrices
\end{itemize}

Here every 3-APH matrix has been defined by three Hadamard's recursive steps
(\ref{eqrfd}) followed by random permutation.
Every 3-ASPH matrix has been defined similarly, but also random scaling has 
been applied with a diagonal matrix $D=\diag(d_i)_{i=1}^n$
having the values of random i.i.d. variables $d_i$ uniformly chosen from the set
$\{1/4,1/2,1,2,4\}$.
 
We permuted all inverses of bidiagonal matrices except for Class 5 of multipliers.

Describing our multipliers we use the following acronyms and abbreviations:
``IBD" for ``the inverse of a bidiagonal",
``MD" for ``the main diagonal", ``SB" for ``subdiagonal", and ``SP" for ``superdiagonal".
We write ``MD$i$", ``$k$th SB$i$" and ``$k$th SP$i$" in order to denote
that the main diagonal, the $k$th subdiagonal, or  the $k$th superdiagonal 
of a bidiagonal matrix, respectively,
was filled with the integer $i$.

\begin{itemize}

\item Class 0:	Gaussian
\item Class 1:	Sum of a 3-ASPH  and two IBD matrices: \\
	B1 with MD$-1$  and  2nd SB$-1$ and
	B2 with MD$+1$ and 1st SP$+1$
\item Class 2:	Sum of a 3-ASPH  and two IBD matrices: \\
        B1 with MD$+1$ and 2nd SB$-1$  and  
        B2 with MD$+1$ and 1st SP$-1$
\item Class 3:	Sum of   a 3-ASPH  and two IBD matrices: \\
	B1 with MD$+1$ and  1st SB$-1$ and
	B2 with MD $+1$  and 1st SP$-1$ 
\item Class 4:	Sum of  a 3-ASPH  and two IBD matrices: \\
	 B1 with MD$+1$ and 1st SB$+1$ and
        B2 with MD$+1$ and 1st SP$-1$
\item Class 5:	Sum of  a 3-ASPH  and two IBD matrices:  \\
	B1 with MD$+1$ and 1st SB$+1$ and B2 with MD$+1$ and 1st SP$-1$
\item Class 6:	Sum of a 3-ASPH  and three IBD matrices:\\
	B1 with MD$-1$ and  2nd SB$-1$,
	B2 with MD$+1$ and 1st SP$+1$ and
	B3 with MD$+1$ and 9th SB$+1$
\item Class 7:	Sum of a 3-ASPH  and three IBD matrices:\\
	 B1 with  MD$+1$ and  2nd SB$-1$, 
         B2 with MD$+1$ and 1st SP$-1$, and
	B3 with  MD$+1$ and  8th SP$+1$
\item Class 8:	Sum of a 3-ASPH  and three IBD matrices:\\
	B1 with   MD$+1$ and 1st SB$-1$,
	B2 with   MD$+1$ and 1st SP$-1$, and
	B3 with   MD$+1$ and 4th  SB$+1$
\item Class 9:	Sum of a 3-ASPH  and three IBD matrices:\\
	B1 with   MD$+1$ and 1st SB$+1$,
	B2 with   MD$+1$ and 1st SP$-1$, and
	B3 with   MD$-1$ and 3rd SP$+1$
\item Class 10:	Sum of three IBD matrices:\\
	B1 with   MD$+1$ and 1st SB$+1$,
	 B2 with   MD$+1$ and 1st SP$-1$, and
	 B3 with   MD$-1$ and 3rd SP$+1$
\item Class 11:	Sum of a 3-APH  and three IBD matrices:\\
	 B1 with   MD$+1$ and  2nd SB$-1$,
	 B2 with   MD$+1$ and 1st SP$-1$, and
	 B3 with   MD$+1$ and  8th SP$+1$
\item Class 12:	Sum of a 3-APH  and two IBD matrices:\\
	 B1 with   MD$+1$ and 1st SB$-1$ and
	 B2 with   MD$+1$ and 1st SP$-1$
\item Class 13:	Sum of a 3-ASPH  and a permutation matrix
\item Class 14:	Sum of a 3-ASPH  and two permutation matrices
\item Class 15:	Sum of a 3-ASPH  and three permutation matrices
\item Class 16:	Sum of a 3-APH  and three  permutation matrices
\item Class 17:	Sum of a 3-APH  and two permutation matrices
\end{itemize}

\begin{figure}[htb] 
\centering
\includegraphics[scale=0.7] {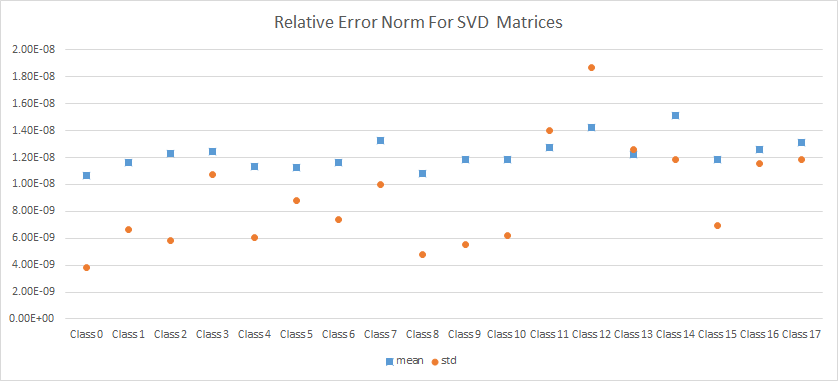}
\caption{Relative Error Norm for SVD-generated Input Matrices}
\label{SuperfastSVD}
\end{figure}

\begin{figure}[htb] 
\centering
\includegraphics[scale=0.7] {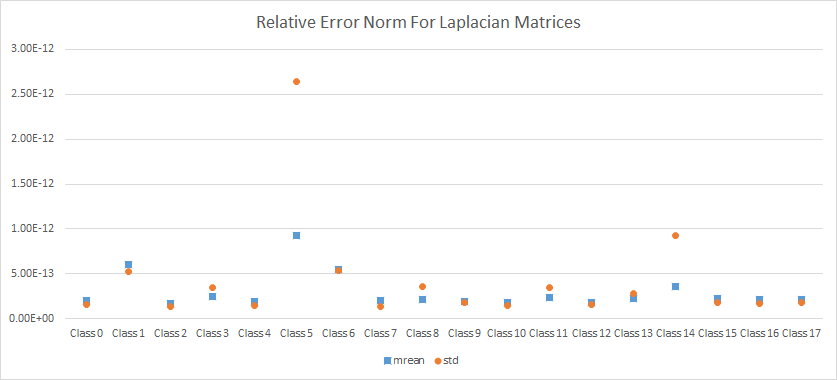}
\caption{Relative Error Norm for Lapacian Input  Matrices}
\label{SuperfastLP}
\end{figure}

\begin{figure}[htb] 
\centering
5\includegraphics[scale=0.7] {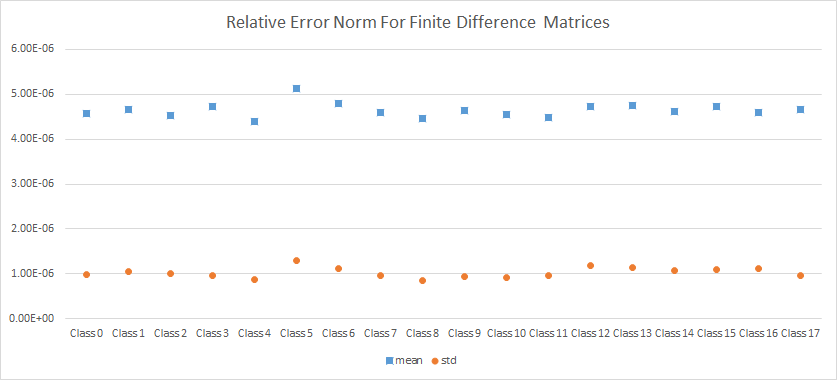}
\caption{Relative Error Norm for Finite-Difference Input Matrices}
\label{SuperfastFD}
\end{figure}

\begin{table}[htb] \label{SuperfastTable}
\begin{center}
\begin{tabular}{|c|c|c|c|c|c|c|}
\hline
			& \multicolumn{2}{|c|}{SVD-generated Matrices} & \multicolumn{2}{|c|}{Laplacian Matrices} & \multicolumn{2}{|c|}{Finite Difference Matrices}\\\hline
 \text{Class No.} & \text{Mean} & \text{Std} & \text{Mean} & \text{Std} & \text{Mean} & \text{Std} \\\hline
Class 0	&	3.54E-09	&	3.28E-09	&	4.10E-14	&	2.43E-13	&	1.61E-06	&	1.35E-06\\\hline
Class 0	&	1.07E-08	&	3.82E-09	&	2.05E-13	&	1.62E-13	&	4.58E-06	&	9.93E-07\\\hline
Class 1	&	1.16E-08	&	6.62E-09	&	6.07E-13	&	5.20E-13	&	4.67E-06	&	1.04E-06\\\hline
Class 2	&	1.23E-08	&	5.84E-09	&	1.69E-13	&	1.34E-13	&	4.52E-06	&	1.01E-06\\\hline
Class 3	&	1.25E-08	&	1.07E-08	&	2.46E-13	&	3.44E-13	&	4.72E-06	&	9.52E-07\\\hline
Class 4	&	1.13E-08	&	6.09E-09	&	1.93E-13	&	1.48E-13	&	4.38E-06	&	8.64E-07\\\hline
Class 5	&	1.12E-08	&	8.79E-09	&	9.25E-13	&	2.64E-12	&	5.12E-06	&	1.29E-06\\\hline
Class 6	&	1.16E-08	&	7.42E-09	&	5.51E-13	&	5.35E-13	&	4.79E-06	&	1.12E-06\\\hline
Class 7	&	1.33E-08	&	1.00E-08	&	1.98E-13	&	1.30E-13	&	4.60E-06	&	9.52E-07\\\hline
Class 8	&	1.08E-08	&	4.81E-09	&	2.09E-13	&	3.60E-13	&	4.47E-06	&	8.57E-07\\\hline
Class 9	&	1.18E-08	&	5.51E-09	&	1.87E-13	&	1.77E-13	&	4.63E-06	&	9.28E-07\\\hline
Class 10	&	1.18E-08	&	6.23E-09	&	1.78E-13	&	1.42E-13	&	4.55E-06	&	9.08E-07\\\hline
Class 11	&	1.28E-08	&	1.40E-08	&	2.33E-13	&	3.44E-13	&	4.49E-06	&	9.67E-07\\\hline
Class 12	&	1.43E-08	&	1.87E-08	&	1.78E-13	&	1.61E-13	&	4.74E-06	&	1.19E-06\\\hline
Class 13	&	1.22E-08	&	1.26E-08	&	2.21E-13	&	2.83E-13	&	4.75E-06	&	1.14E-06\\\hline
Class 14	&	1.51E-08	&	1.18E-08	&	3.57E-13	&	9.27E-13	&	4.61E-06	&	1.08E-06\\\hline
Class 15	&	1.19E-08	&	6.93E-09	&	2.24E-13	&	1.76E-13	&	4.74E-06	&	1.09E-06\\\hline
Class 16	&	1.26E-08	&	1.16E-08	&	2.15E-13	&	1.70E-13	&	4.59E-06	&	1.12E-06\\\hline
Class 17	&	1.31E-08	&	1.18E-08	&	1.25E-14	&	5.16E-14	&	1.83E-06	&	1.55E-06\\\hline

\end{tabular}
\caption{Relative Error Norm in Tests with Multipliers of  Additional Classes}
\end{center}
\end{table}

The tests show quite accurate outputs even where we applied Algorithm \ref{alg1}
with very sparse multipliers of classes 13--17.

%------------------------------------------------------------------------------

\clearpage

%------------------------------------------------------------------------------
 
\section{Conclusions: Sample Extension to 
 the Computations for Least Squares Regression (LSR)}\label{sext}

%------------------------------------------------------------------------------

Our duality techniques for  the average inputs 
can  be extended to the acceleration of various 
matrix computations involving random multipliers.
In this concluding section we describe such a sample  
 extension to the following fundamental problem of matrix computations  (cf. \cite{GL13}).

\begin{problem}\label{pr1} {\em Least Squares Solution of an Overdetermined Linear System of Equations.}
Given two integers $m$ and $d$ such that $1\le d<m$,
a matrix $A\in \mathbb R^{m\times d}$, and a vector ${\bf b}\in \mathbb R^{m}$,
compute a vector ${\bf x}$ that minimizes the norm $||A{\bf x}-{\bf b}||$.
\end{problem}

If a matrix $A$ has full rank $n$, then unique solution 
is given by the vector ${\bf x}=(A^TA)^{-1}A^T{\bf b}$,
satisfying the linear system of normal equations $A^TA{\bf x}=A^T{\bf b}$.
Otherwise solution is not unique, and a solution  ${\bf x}$ having the
minimum norm is given by the vector $A^+{\bf b}$. 
%------------------------------------------------------------------------------
In the important case where $m\gg d$ and an approximate solution
is acceptable,  Sarl\'os in \cite{S06} proposed  to
  simplify the computations as follows:

%------------------------------------------------------------------------------
 
\begin{algorithm}\label{algapprls} 
{\rm  Least Squares Regression (LSR).}

%------------------------------------------------------------------------------
 
\begin{description}

%------------------------------------------------------------------------------

%\item[{\sc Input:}] 
%As for Problem \ref{pr1}.
%------------------------------------------------------------------------------

%\item[{\sc Output:}] 
%A  rank-$r$  approximation matrix $\tilde M$ 
%and the relative error $||\tilde M-M||/||M||$. 

%------------------------------------------------------------------------------

\item[{\sc Initialization:}] 
 Fix an 
%oversampling
 integer $k$ such that $1\le k\ll m$. 

%------------------------------------------------------------------------------ 

\item[{\sc Computations:}]
%\item (cf. items 1.4.6 and 1.4.10): $~$

\begin{enumerate}
\item %1
Generate a scaled $k\times m$ Gaussian matrix $F$.

\item %2 
Compute the matrix $FA$ and the vector $F{\bf b}$.
\item %3
Output
a solution $\tilde {\bf x}$ to the compressed Problem \ref{pr1}
where the matrix $A$ and the vector ${\bf b}$
are replaced by the matrix 
$FA$ and the vector $F{\bf b}$,
respectively.
\end{enumerate}

%------------------------------------------------------------------------------

\end{description}

%------------------------------------------------------------------------------

\end{algorithm}

%------------------------------------------------------------------------------

Now write $M=(A~|~{\bf b})$ and ${\bf y}=\begin{pmatrix}
{\bf x} \\ -1
\end{pmatrix}$
and compare the error norms 
$||FM\tilde{\bf y}||=||FA\tilde{\bf x}-F{\bf b}||$ (of the output $\tilde{\bf x}$ 
of the latter algorithm) and $||M{\bf y}||=||A{\bf x}-{\bf b}||$
(of the solution ${\bf x}$ of the original Problem \ref{pr1}).

%------------------------------------------------------------------------------
 
\begin{theorem}\label{thlsrd}  {\em  \cite[Theorem 2.3]{W14}.}
Suppose that we are given 
two tolerance values $\delta$ and  $\xi$, $0<\delta<1$ and  $0<\xi<1$, 
three integers $k$, $m$ and  $d$ such that $1\le d<m$
and $$k=(d+\log(1/\delta)\xi^{-2})\theta,$$ for a certain constant $\theta$, and
a matrix $G_{k,m}\in \mathcal G^{k\times m}$. 
Then, with a probability at least $1-\delta$, it holds that 
%\begin{equation}\label{elsrd}
$$(1-\xi)||M{\bf y}||\le \frac{1}{\sqrt k}||G_{k,m}M{\bf y}||\le (1+\xi)||M{\bf y}||$$
%\end{equation}
for all matrices $M\in \mathbb R^{m\times (d+1)}$
and all vectors ${\bf y}=(y_i)_{i=0}^{d}\in \mathbb R^{d+1}$ normalized so that $y_d=-1$.
\end{theorem}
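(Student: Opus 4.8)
The statement is to be read with the matrix $M$ held fixed and the probability taken over $G=G_{k,m}$; equivalently it asserts that with probability at least $1-\delta$ the scaled map $\frac1{\sqrt k}G$ distorts Euclidean lengths of all vectors in the fixed subspace $\mathcal R(M)\subseteq\mathbb R^m$ by a factor in $[1-\xi,1+\xi]$, i.e. $\frac1{\sqrt k}G$ is an oblivious $(1\pm\xi)$-embedding of a subspace of dimension at most $d+1$. The plan is: (1) reduce the two-sided inequality over all normalized $\mathbf y$ to this subspace-embedding form; (2) use rotation invariance of Gaussian matrices to reduce further to a singular-value bound for a thin Gaussian matrix; (3) prove that bound by an $\epsilon$-net plus $\chi^2$-concentration argument.

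Step 1 (reduction to a subspace embedding). As $\mathbf y=(y_i)_{i=0}^d$ ranges over vectors with $y_d=-1$, the vector $M\mathbf y$ ranges within the column span $\mathcal R(M)$, whose dimension $\rho$ is at most $d+1$. The claimed inequality is invariant under scaling $\mathbf y\mapsto c\mathbf y$ (both $\|M\mathbf y\|$ and $\|GM\mathbf y\|$ scale by $|c|$) and is trivial when $M\mathbf y=\mathbf 0$, so it suffices to prove: with probability at least $1-\delta$, $(1-\xi)\|\mathbf z\|\le\frac1{\sqrt k}\|G\mathbf z\|\le(1+\xi)\|\mathbf z\|$ for all $\mathbf z$ in a fixed subspace $\mathcal U\subseteq\mathbb R^m$ with $\dim\mathcal U=\rho\le d+1$.

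Step 2 (rotation invariance). Choose $Q\in\mathbb R^{m\times\rho}$ with $Q^TQ=I_\rho$ spanning $\mathcal U$, so every $\mathbf z\in\mathcal U$ is $\mathbf z=Q\mathbf w$ with $\|\mathbf z\|=\|\mathbf w\|$ and $G\mathbf z=(GQ)\mathbf w$. Since the entries of $G$ are i.i.d. standard Gaussian and $Q$ has orthonormal columns, $GQ$ is a $k\times\rho$ Gaussian matrix, which is exactly the invariance recorded in Lemma \ref{lepr3} (applied to transposes). Hence the claim reduces to: with probability at least $1-\delta$, all singular values of $\frac1{\sqrt k}G_{k,\rho}$ lie in $[1-\xi,1+\xi]$, that is $\sigma_{\max}(\tfrac1{\sqrt k}G_{k,\rho})\le1+\xi$ and $\sigma_{\min}(\tfrac1{\sqrt k}G_{k,\rho})\ge1-\xi$.

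Step 3 (singular-value concentration; the crux). Fix a $\tfrac14$-net $\mathcal N$ of the unit sphere $S^{\rho-1}\subset\mathbb R^\rho$ with $|\mathcal N|\le9^\rho$ (volumetric bound). For each fixed unit $\mathbf w$, the vector $G_{k,\rho}\mathbf w$ has $k$ i.i.d.\ $N(0,1)$ coordinates, so $\|G_{k,\rho}\mathbf w\|^2\sim\chi_k^2$, and the Laurent--Massart tail gives $\Pr\!\big[\,|\tfrac1k\|G_{k,\rho}\mathbf w\|^2-1|>\eta\,\big]\le2\exp(-ck\eta^2)$ for $0<\eta\le c'$ and absolute constants $c,c'>0$; since $|t^2-1|\le\eta$ forces $|t-1|\le\eta$ for $t\ge0$, this also controls $|\tfrac1{\sqrt k}\|G_{k,\rho}\mathbf w\|-1|$. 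A union bound over $\mathcal N$ shows that once $k\ge c^{-1}\eta^{-2}\big(\rho\ln9+\ln(2/\delta)\big)$ we have $|\tfrac1{\sqrt k}\|G_{k,\rho}\mathbf w\|-1|\le\eta$ for all $\mathbf w\in\mathcal N$ with probability at least $1-\delta$; the standard net-to-sphere passage (bounding $\sigma_{\max}\le(1-\tfrac14)^{-1}\max_{\mathbf w\in\mathcal N}\tfrac1{\sqrt k}\|G_{k,\rho}\mathbf w\|$ and $\sigma_{\min}\ge\min_{\mathbf w\in\mathcal N}\tfrac1{\sqrt k}\|G_{k,\rho}\mathbf w\|-\tfrac14\sigma_{\max}$) then transfers these estimates, at the cost of a constant factor absorbed by taking $\eta\asymp\xi$, to all singular values. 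Using $\rho\le d+1$, the requirement on $k$ becomes $k=\Theta\!\big(\xi^{-2}(d+\log(1/\delta))\big)$, which is what the hypothesis $k=(d+\log(1/\delta)\xi^{-2})\theta$ encodes up to the choice of the constant $\theta$ and the bookkeeping of the $\xi^{-2}$ factor; alternatively one may substitute the Davidson--Szarek bounds $\mathbb E\,\sigma_{\max}(\tfrac1{\sqrt k}G_{k,\rho})\le1+\sqrt{\rho/k}$ and $\mathbb E\,\sigma_{\min}(\tfrac1{\sqrt k}G_{k,\rho})\ge1-\sqrt{\rho/k}$ together with Gaussian Lipschitz concentration of singular values at scale $1/\sqrt k$. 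The main obstacle is precisely this step: getting both the $\sigma_{\max}$ and the $\sigma_{\min}$ tails simultaneously with linear dependence on the dimension $d$ and logarithmic dependence on $1/\delta$; Steps 1 and 2 are routine reductions.
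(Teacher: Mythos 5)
The paper offers no proof of this statement: it is imported verbatim as \cite[Theorem 2.3]{W14}, and the only ingredient of its proof that the paper later uses (in proving Theorem \ref{thlsrdd}) is precisely your Step 2, namely that rotational invariance (Lemma \ref{lepr3}) makes $\sqrt{k}\,FM$ Gaussian when $M$ has orthonormal columns. Your argument --- reduce to a $(1\pm\xi)$-embedding of the at most $(d+1)$-dimensional subspace $\mathcal R(M)$, use rotation invariance to replace $GQ$ by a $k\times\rho$ Gaussian matrix, and control its extreme singular values by an $\epsilon$-net plus $\chi^2$-concentration --- is correct and is essentially the proof given in the cited source, so there is nothing to compare against inside this paper. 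Two of your side remarks are worth keeping: the quantifier ``for all $M$'' must indeed be read as ``for each fixed $M$'' (a single $G_{k,m}$ with $k<m$ cannot embed every $M$), and your derivation yields $k=\Theta\bigl(\xi^{-2}(d+\log(1/\delta))\bigr)$, which matches Woodruff's statement and indicates that the placement of $\xi^{-2}$ only on the $\log(1/\delta)$ term in the paper's formula is a misprint.
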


 The
theorem implies that with a probability at least $1-\delta$,
Algorithm \ref{algapprls} outputs an approximate solution to Problem \ref{pr1}
within the error norm bound $\xi$
provided that $k=(d+\log(1/\delta)\xi^{-2})\theta$
and $F=\frac{1}{\sqrt k}G_{k,m}$.\footnote{Such approximate solutions 
serve as preprocessors for practical implementation of
numerical linear algebra algorithms 
for Problem \ref{pr1} of least squares computation \cite[Section 4.5]{M11}, \cite{RT08}, \cite{AMT10}.}

 For $m\gg k$, the computational cost of performing the algorithm for approximate 
solution  
dramatically decreases versus the cost  of computing exact solution,
but can still be prohibitively high at the stage of computing the matrix product
$FM$ for $F=G_{k,m}$. 
In a number of papers the former cost estimate has been dramatically 
decreased by means of replacing a multiplier 
$F=\frac{1}{\sqrt k}G_{k,m}$ with various random sparse and structured matrices
(see \cite[Section 2.1]{W14}), for which the bound of Theorem \ref{thlsrd}
%(\ref{elsrd})
still holds for all matrices
$M\in \mathbb R^{m\times (d+1)}$, although
at the expense of increasing significantly the dimension $k$.  

Can we achieve similar progress without such an increase?
\cite{CW13} provides positive probabilistic answer
based on the Johnson--Lindenstrauss Theorem,
while the
  following theorem does this by using our duality approach
 in the case where $M$ is  the average matrix in $\mathbb R^{m\times (d+1)}$
under the Gaussian probability distribution:

%------------------------------------------------------------------------------
 
\begin{theorem}\label{thlsrdd} {\em Dual LSR.}
The bound 
%(\ref{elsrd}) 
of Theorem \ref{thlsrd} holds with a probability 
at least $1-\delta$ where $\sqrt k~ M\in \mathcal G^{m\times (d+1)}$
and $F\in \mathbb R^{k\times m}$ is an orthogonal matrix.
\end{theorem}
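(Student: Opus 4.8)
\textbf{Proof plan for Theorem~\ref{thlsrdd}.}
The plan is to reduce the dual claim to the primal Theorem~\ref{thlsrd} through the rotational invariance of Gaussian matrices, exactly as Theorems~\ref{thdual} and~\ref{th1d} reduce to their primal counterparts. Since $k\ll m$, the hypothesis that $F\in\mathbb R^{k\times m}$ is orthogonal means that $FF^T$ is a scalar multiple of $I_k$; I normalize so that $\sqrt{k/m}\,F$ has orthonormal rows, which is the scaling under which the two sides of the asserted inequality are comparable (the exact analogue of the factor $1/\sqrt k$ carried by $G_{k,m}$ in Theorem~\ref{thlsrd}). Put $\Gamma=\sqrt k\,M\in\mathcal G^{m\times(d+1)}$; by Theorem~\ref{thrnd} we have $\rank(\Gamma)=d+1$ with probability $1$, and we condition on this. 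Let $\Gamma=S\Sigma T^T$ be a compact SVD, so $S\in\mathbb R^{m\times(d+1)}$ has orthonormal columns. For every $\mathbf y$ one has $\|M\mathbf y\|=\tfrac1{\sqrt k}\|\Sigma T^T\mathbf y\|$ and $\|FM\mathbf y\|=\tfrac1{\sqrt k}\|FS(\Sigma T^T\mathbf y)\|$, while $\Sigma T^T\mathbf y$ ranges over all of $\mathbb R^{d+1}$; since the asserted bound is homogeneous in $\mathbf y$ (and holds trivially at $\mathbf y=\mathbf 0$), requiring it on the slice $y_d=-1$ is equivalent to requiring that every singular value of the $k\times(d+1)$ matrix $FS$ lie in $[1-\xi,1+\xi]$.

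The duality step is that $S$, being (up to the harmless right‑multiplication by an orthogonal matrix) the matrix of left singular vectors of the Gaussian matrix $\Gamma$, is Haar‑distributed on the Stiefel manifold of orthonormal $(d+1)$‑frames in $\mathbb R^m$; this is the same rotational‑invariance mechanism that underlies Lemma~\ref{lepr3}. Extend $F$ to an $m\times m$ orthogonal matrix $\widehat F=\binom{\sqrt{m/k}\,F}{F_\perp}$; then $\widehat F S$ is again a Haar $(d+1)$‑frame, and $FS$ is $\sqrt{k/m}$ times its top $k$ rows, i.e. $FS$ has the law of $\sqrt{k/m}$ times the top‑left $k\times(d+1)$ corner of a Haar‑random $m\times m$ orthogonal matrix. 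Realizing that Haar frame as the left‑singular‑vector matrix of a fresh Gaussian $\binom{G_1}{G_2}\in\mathcal G^{m\times(d+1)}$ with independent blocks $G_1\in\mathcal G^{k\times(d+1)}$ and $G_2\in\mathcal G^{(m-k)\times(d+1)}$, its top $k$ rows are $G_1(G_1^TG_1+G_2^TG_2)^{-1/2}$, so that
\[
FS\ \stackrel{d}{=}\ \sqrt{\tfrac km}\;G_1\big(G_1^TG_1+G_2^TG_2\big)^{-1/2}.
\]
Because $k\ll m$, Wishart concentration gives $\|G_2^TG_2-(m-k)I_{d+1}\|=O(\sqrt{md})$ and $\|G_1^TG_1\|=O(k)$ with overwhelming probability, hence $(G_1^TG_1+G_2^TG_2)^{-1/2}=\tfrac1{\sqrt m}(I_{d+1}+\Theta)$ with $\|\Theta\|$ as small as we please once the constant $\theta$ in $k=(d+\log(1/\delta)\xi^{-2})\theta$ is chosen large enough relative to the dimensional parameters; therefore $FS$ equals $\tfrac1{\sqrt k}G_1=\tfrac1{\sqrt k}G_{k,d+1}$ up to an operator‑norm perturbation that is $o(\xi)$.

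It then suffices to show that every singular value of $\tfrac1{\sqrt k}G_{k,d+1}$ lies in $[1-\tfrac\xi2,\,1+\tfrac\xi2]$ with probability at least $1-\delta$ whenever $k=(d+\log(1/\delta)\xi^{-2})\theta$; this is the single‑subspace specialization of Theorem~\ref{thlsrd} and is quantified by the extreme‑singular‑value estimates for Gaussian matrices in the Appendix (cf. Theorems~\ref{thsignorm} and~\ref{thsiguna}). The main obstacle is the quantitative part of the duality step: bounding, in operator norm and with failure probability at most a fixed fraction of $\delta$, the deviation of the scaled Haar corner $FS$ from the Gaussian matrix $\tfrac1{\sqrt k}G_{k,d+1}$ tightly enough that the window $[1\pm\xi]$ survives; the soft parts (rotational invariance, the SVD rephrasing, the Haar description of $\mathcal R(\Gamma)$) simply parallel the already‑proved dual theorems. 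An alternative that sidesteps this estimate is to invoke a total‑variation bound between the $k\times(d+1)$ corner of a Haar $m\times m$ orthogonal matrix and $\tfrac1{\sqrt m}G_{k,d+1}$, valid for $k,d=o(\sqrt m)$, and transfer the Gaussian singular‑value bound through it, at the cost of a mild restriction on $k$ and $d$ relative to $m$.
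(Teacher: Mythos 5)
Your route is genuinely different from the paper's, and it is missing the one observation that makes the paper's proof a two-liner: by Lemma~\ref{lepr3} (rotational invariance), the product $F(\sqrt k\,M)=\sqrt k\,FM$ is \emph{exactly} a $k\times(d+1)$ Gaussian matrix when $F$ has orthonormal rows and $\sqrt k\,M\in\mathcal G^{m\times(d+1)}$, and the proof of Theorem~\ref{thlsrd} in \cite[Section 2]{W14} uses only the property that $\sqrt k\,FM\in\mathcal G^{k\times(d+1)}$. So no SVD of $M$, no Haar measure on the Stiefel manifold, and no Wishart concentration are needed. You do cite Lemma~\ref{lepr3}, but only to identify $\mathcal R(\Gamma)$ as a Haar frame; you then have to study $FS$, whose law is only \emph{approximately} a scaled Gaussian. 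Applying rotational invariance one step earlier, to $F\Gamma$ itself rather than to $FS$, removes the approximation entirely and is the intended argument.

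Beyond being a detour, your argument has a genuine gap that you yourself flag as ``the main obstacle'': the operator-norm comparison between $\sqrt{k/m}$ times the top $k\times(d+1)$ corner of a Haar orthogonal matrix and $\tfrac1{\sqrt k}G_{k,d+1}$. As sketched, the perturbation $\Theta$ in $(G_1^TG_1+G_2^TG_2)^{-1/2}=\tfrac1{\sqrt m}(I_{d+1}+\Theta)$ has norm of order $\sqrt{d/m}+\sqrt{\log(1/\delta)/m}+k/m$, and forcing this to be $o(\xi)$ imposes conditions of the form $m\gg (d+\log(1/\delta))\xi^{-2}$ that are not among the theorem's hypotheses (only $k<m$ is assumed); the total-variation alternative carries the same restriction $k,d=o(\sqrt m)$. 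To your credit, the reduction of the claim to ``all singular values of $FS$ lie in $[1-\xi,1+\xi]$'' is the right way to control the ratio $\|FM{\bf y}\|/\|M{\bf y}\|$ with its random denominator, and your renormalization of $F$ confronts a scaling issue that the paper's terse proof (which establishes concentration of $\|FM{\bf y}\|$ about the deterministic quantity governed by the Gaussian law of $\sqrt k\,FM$) leaves implicit. But as written the proposal is incomplete precisely at its quantitative core, whereas the paper's argument needs no such estimate.
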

\begin{proof} 
Theorem \ref{thlsrd} has been proven in \cite[Section 2]{W14} in
the case where  
$\sqrt k ~FM\in \mathcal G^{k\times (d+1)}$.
 This is the case where $\sqrt k ~F\in  \mathcal G^{k\times m}$
and $M$ is an orthogonal $m\times (d+1)$ matrix,
but  is also the case under the assumptions of  
Theorem \ref{thlsrdd}, by virtue of Lemma \ref{lepr3}. 
\end{proof}

Theorem \ref{thlsrdd} supports the computation of
an approximate randomized solution of
LSR Problem \ref{pr1}
for any orthogonal multiplier $F$
(e.g., an abridged  scaled Hadamard's multiplier or a count sketch
multiplier)
and for an input matrix $M\in \mathbb R^{m\times (d+1)}$
 average under the Gaussian probability distribution.

It follows that in this case
Algorithm \ref{algapprls} can fail only  
for a narrow class of pairs $F$ and $M$ where
$F$ denotes orthogonal matrices in $\mathbb R^{k\times m}$
 and $M$ denotes matrices in $\mathbb R^{m\times (d+1)}$,
and even in the unlikely case of failure we can still have good chances for success 
if we apply 
heuristic recipes of our Section \ref{smngflr}.

\medskip
%------------------------------------------------------------------------------

\medskip

%------------------------------------------------------------------------------

{\bf {\LARGE {Appendix} }}
\appendix 

%------------------------------------------------------------------------------

%\medskip

%------------------------------------------------------------------------------

\section{Randomized Matrix Computations}\label{srmc}

%------------------------------------------------------------------------------

% $W_{k,l}$ denotes the
%$k\times l$ leading (that is, northwestern) block
%of an $m\times n$ matrix $W$.
  
%------------------------------------------------------------------------------

\begin{theorem}\label{thrnd} 
Suppose that $A$ is an  
$m\times n$ matrix of full rank $k=\min\{m,n\}$, 
$F$ and $H$ are $r\times m$ and 
$n\times r$  matrices, respectively, for $r\le k$,
and  the  entries of these two matrices are nonconstant linear combinations
of finitely many i.i.d. random variables $v_1,\dots,v_h$. 

Then 
the matrices $F$, $FA$, $H$, and $AH$  
have full rank $r$ 

(i) with probability 1
if $v_1,\dots,v_h$ are Gaussian variables
and 

(ii) with a probability at least $1-r/|\mathcal S|$
if they are random variables sampled under the
uniform probability distribution from 
a finite set $\mathcal S$ having cardinality 
$|\mathcal S|$.  
\end{theorem}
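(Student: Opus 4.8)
The plan is to reduce the rank claim to the non-vanishing of a polynomial and then to invoke two standard facts: a nonzero polynomial in $h$ real variables vanishes only on a set of Lebesgue measure zero, hence of Gaussian measure zero; and the Schwartz--Zippel bound, namely that a nonzero polynomial of total degree $d$ evaluated at a point whose coordinates are i.i.d.\ uniform in a finite set $\mathcal S$ is zero with probability at most $d/|\mathcal S|$.

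First I would treat the four matrices $F$, $FA$, $H$, $AH$ by one argument. Let $W$ be any one of them; then $W$ is $p\times q$ with $\min\{p,q\}\ge r$ (here one uses $r\le k=\min\{m,n\}$ for $FA$ and $AH$). Every entry of $W$ is an affine function of $v_1,\dots,v_h$: this holds for $F$ and $H$ by hypothesis, and for $FA$ and $AH$ because multiplication by the constant matrix $A$ produces linear combinations of affine forms, which are again affine. Hence each $r\times r$ minor of $W$ is a polynomial $\mu(v_1,\dots,v_h)$ of total degree at most $r$. Since $W$ has rank less than $r$ exactly when all of its $r\times r$ minors vanish, it suffices to produce one minor $\mu$ that is not the zero polynomial: then $\{v\in\mathbb R^h:\mu(v)=0\}$ has Lebesgue measure $0$, which gives part (i) because the i.i.d.\ Gaussian distribution is absolutely continuous, and Schwartz--Zippel gives failure probability at most $r/|\mathcal S|$ for uniform sampling, which gives part (ii); a union bound over the four matrices affects only the constant, and a finite intersection of probability-$1$ events is again almost sure.

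The one substantive step is exhibiting, for each of $F$, $FA$, $H$, $AH$, an $r\times r$ minor that is not identically zero. For $F$ and $H$ this is where the concrete structure of the multiplier enters: one checks that some $r\times r$ submatrix has determinant a nonzero polynomial in $v_1,\dots,v_h$. For a Gaussian matrix this is immediate, since the entries are distinct independent variables, so the leading $r\times r$ minor is the determinant of a matrix of distinct indeterminates, patently nonzero; for the structured families of Section~\ref{ssprsml} (circulant, Toeplitz, abridged Fourier or Hadamard, inverse-bidiagonal, and their combinations) it is a short direct computation. For $FA$ and $AH$ one additionally uses $\rank(A)=k\ge r$: for a generic real $F$ the product $FA$ has rank $\min\{r,\rank(A)\}=r$, so some $r\times r$ minor of $FA$ — regarded first as a polynomial in the entries of $F$, then composed with the affine forms expressing those entries through $v$ — is a nonzero polynomial in $v$, the composition being nonzero because the affine family of admissible $F$ meets a point at which that minor is nonzero.

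The main obstacle I anticipate is precisely this non-vanishing step in the generality literally stated: it requires the mild extra input that the affine parametrization $v\mapsto W$ of the entries is rich enough to escape the zero locus of the minor. This can fail for degenerate templates such as $F={\bf 1}{\bf 1}^{T}v_1$ (which has rank $1$ regardless of $v_1$), but it holds for every multiplier family used in this paper. I would therefore either add this genericity as an explicit hypothesis or verify it family by family; the remaining Schwartz--Zippel and measure-zero packaging is entirely routine.
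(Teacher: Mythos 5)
Your proposal follows essentially the same route as the paper's own proof: each $r\times r$ minor of $F$, $FA$, $H$, or $AH$ is a polynomial of degree at most $r$ in $v_1,\dots,v_h$, its zero set has Lebesgue and hence Gaussian measure zero (part (i)), and the DeMillo--Lipton--Schwartz--Zippel lemma yields the $r/|\mathcal S|$ bound (part (ii)). Your caveat about the non-vanishing step is well taken and is in fact the one point the paper elides: its proof asserts that $\det(B)=0$ defines a proper subvariety (citing \cite{BV88}) without checking that the minor is a nonzero polynomial under the stated hypothesis, and your example $F=\mathbf{1}\mathbf{1}^{T}v_1$ shows that ``nonconstant linear combinations'' alone does not guarantee this, so the genericity condition you propose to add (or a family-by-family verification) is genuinely needed for the theorem as literally stated.
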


%------------------------------------------------------------------------------

\begin{proof}
The determinant, $\det(B)$,
of any $r\times r$ block $B$ of a matrix 
 $F$, $FA$, $H$, or $AH$
is a polynomial of degree $r$ in the variables  $v_1,\dots,v_h$,
and so the equation $\det(B)=0$ 
  defines an algebraic variety of a lower
dimension in the linear space of these variables
(cf. \cite[Proposition 1]{BV88}). 
Clearly, such a variety has Lebesgue  and
Gaussian measures 0, both being absolutely continuous
with respect to one another. This implies part (i) of the theorem.
Derivation of part (ii) from a celebrated lemma of \cite{DL78},
 also known from \cite{Z79} and \cite{S80},  
is a well-known pattern, specified in some detail
 in \cite{PW08}.
\end{proof}

%------------------------------------------------------------------------------

\begin{lemma}\label{lepr3} ({\rm Rotational invariance of a Gaussian matrix.})
%\cite[Proposition 2.2]{SST06}.
Suppose that $k$, $m$, and $n$  are three  positive integers,
$G$ is an 
 $m\times n$  Gaussian matrix, and
$S$ and $T$ are $k\times m$ and 
$n\times k$
 orthogonal matrices, respectively.

Then $SG$ and $GT$ are Gaussian matrices.
\end{lemma}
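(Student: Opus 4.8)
The plan is to use the fact that the entries of $SG$ are jointly Gaussian with mean zero — each being a fixed linear combination of the i.i.d.\ mean-zero Gaussian entries of $G$ — so that it suffices to verify that these entries are pairwise uncorrelated and have unit variance; then mutual independence and standard-Gaussian marginals follow from the elementary fact that a mean-zero jointly Gaussian family with identity covariance is i.i.d.\ standard Gaussian. First I would record the hypothesis on $S$ in the form actually needed: a $k\times m$ orthogonal matrix in the sense of Section~\ref{ssdef} has orthonormal rows, $SS^T=I_k$ (the alternative clause $S^TS=I_m$ forces $k\ge m$, and in every use of this lemma in Theorems~\ref{thprm} and~\ref{thdual} one has $k\le m$ with $SS^T=I_k$); likewise a $n\times k$ orthogonal $T$ has orthonormal columns, $T^TT=I_k$.

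Writing $S=(s_{ip})$, $G=(g_{pj})$ with all $g_{pj}$ jointly Gaussian, $\mathbb E[g_{pj}]=0$, and $\mathbb E[g_{pj}g_{ql}]=\delta_{pq}\delta_{jl}$, I would compute
\[
\mathbb E\big[(SG)_{ij}\,(SG)_{kl}\big]=\sum_{p,q}s_{ip}s_{kq}\,\mathbb E[g_{pj}g_{ql}]=\delta_{jl}\sum_{p}s_{ip}s_{kp}=\delta_{jl}\,(SS^T)_{ik}=\delta_{jl}\,\delta_{ik}.
\]
Hence the $kn$ entries of $SG$ form a mean-zero jointly Gaussian family with identity covariance matrix, so they are i.i.d.\ standard Gaussian and $SG\in\mathcal G^{k\times n}$. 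The claim for $GT$ then follows by transposition: $G^T$ is an $n\times m$ Gaussian matrix and $T^T$ is a $k\times n$ matrix with orthonormal rows, so the case already proved gives $T^TG^T=(GT)^T\in\mathcal G^{k\times m}$, whence $GT\in\mathcal G^{m\times k}$.

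As an alternative that avoids covariance bookkeeping, when $k<m$ I would enlarge $S$ to an $m\times m$ orthogonal matrix $\widehat S$ by adjoining $m-k$ further orthonormal rows; the change-of-variables formula with Jacobian $|\det\widehat S|=1$ shows that $\widehat S G$ has the same joint density (proportional to $\exp(-\tfrac12\|\cdot\|_F^2)$ on $\mathbb R^{m\times n}$) as $G$, hence is Gaussian, and $SG$ is the submatrix formed by its top $k$ rows.

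The only genuine delicacy — and it is a matter of exposition rather than a real obstacle — is pinning down which of the two clauses in the paper's definition of ``orthogonal'' is in force, so that the covariance above collapses to the identity: I would state explicitly at the outset that what the lemma really needs is $SS^T=I_k$ and $T^TT=I_k$, i.e.\ orthonormal rows of $S$ and orthonormal columns of $T$. Everything else is the standard, routine verification sketched above.
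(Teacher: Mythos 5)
Your proof is correct. Note that the paper itself states Lemma \ref{lepr3} without any proof at all, treating rotational invariance of Gaussian matrices as a standard fact, so there is no argument of the authors' to compare yours against; what you have written supplies exactly the routine verification the paper omits. Both of your routes are sound: the covariance computation shows the entries of $SG$ form a mean-zero jointly Gaussian family (being linear images of the jointly Gaussian entries of $G$) with identity covariance, hence are i.i.d.\ standard Gaussian; and the change-of-variables argument via completing $S$ to an $m\times m$ orthogonal matrix $\widehat S$ with $|\det \widehat S|=1$ gives the same conclusion from the rotation invariance of the density $\exp(-\tfrac12\|\cdot\|_F^2)$. You are also right to flag the ambiguity in the paper's two-clause definition of ``orthogonal'': the lemma needs $SS^T=I_k$ and $T^TT=I_k$, which is the case in every application (Theorems \ref{thprm}, \ref{thdual}, and \ref{thlsrdd}), where the orthogonal factors come from compact SVDs and have $k\le m$. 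The only blemish is notational: you reuse $k$ both as the row dimension of $S$ and as a row index in $(SG)_{kl}$ in the covariance display; rename the index to avoid the clash. The transposition argument reducing the $GT$ case to the $SG$ case is fine.
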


%------------------------------------------------------------------------------

We state the following estimates for real matrices,
but similar estimates in the case of complex  
 matrices can be found in \cite{D88}, \cite{E88}, \cite{CD05}, 
and \cite{ES05}:

\begin{definition}\label{defnrm} {\rm Norms of random matrices and
expected value of 
a random variable.}
Write 
$\nu_{m,n}=||G||$,
$\nu_{m,n}^+=||G^+||$,  and
$\nu_{m,n,F}^+=||G^+||_F$,
for  a  Gaussian $m\times n$ matrix  $G$,
and write $\mathbb E(v)$ for the expected value of 
a random variable $v$.
($\nu_{m,n}=\nu_{n,m}$,
$\nu_{m,n}^+=\nu_{n,m}^+$,
and $\nu_{F,m,n}=\nu_{F,n,m}$,
for all pairs of $m$ and $n$.) 
\end{definition}

%-----------------------------------------------------------------------------

\begin{theorem}\label{thsignorm}
(Cf. \cite[Theorem II.7]{DS01}.)
Suppose 
that $m$ and $n$ are positive integers,
$h=\max\{m,n\}$, $t\ge 0$.
%------------------------------------------------------------------------------ 
Then 

(i) {\rm Probability}$\{\nu_{m,n}>t+\sqrt m+\sqrt n\}\le
\exp(-t^2/2)$ and 

(ii) $\mathbb E(\nu_{m,n})< 1+\sqrt m+\sqrt n$.
\end{theorem}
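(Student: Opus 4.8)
The plan is to combine two standard facts about Gaussian matrices: concentration of Lipschitz functionals under the Gaussian measure, and the Sudakov--Fernique comparison bound for expected suprema of Gaussian processes.

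First I would observe that, identifying an $m\times n$ matrix with a point of $\mathbb R^{mn}$, the Frobenius norm becomes the Euclidean norm and a Gaussian matrix $G$ becomes a standard Gaussian vector, while $G\mapsto\|G\|=\nu_{m,n}$ is $1$-Lipschitz for this metric, since $\bigl|\,\|A\|-\|B\|\,\bigr|\le\|A-B\|\le\|A-B\|_F$. Hence the Gaussian concentration inequality for Lipschitz functions yields
\[
\Pr\{\nu_{m,n}>\mathbb E(\nu_{m,n})+t\}\le\exp(-t^{2}/2)\qquad(t\ge 0),
\]
so it only remains to bound the mean by $\mathbb E(\nu_{m,n})\le\sqrt m+\sqrt n$.

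For that bound I would write $\nu_{m,n}=\|G\|=\sup\{\mathbf{y}^{T}G\mathbf{x}:\ \mathbf{x}\in\mathbb R^{n},\ \mathbf{y}\in\mathbb R^{m},\ \|\mathbf{x}\|=\|\mathbf{y}\|=1\}$, displaying $\nu_{m,n}$ as the supremum of the centered Gaussian process $X_{\mathbf{x},\mathbf{y}}=\mathbf{y}^{T}G\mathbf{x}=\langle G,\mathbf{y}\mathbf{x}^{T}\rangle$, and compare it with $Y_{\mathbf{x},\mathbf{y}}=\mathbf{g}^{T}\mathbf{x}+\mathbf{h}^{T}\mathbf{y}$ for independent standard Gaussian vectors $\mathbf{g}\in\mathbb R^{n}$, $\mathbf{h}\in\mathbb R^{m}$. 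A short computation over the unit spheres gives $\mathbb E(X_{\mathbf{x},\mathbf{y}}-X_{\mathbf{x}',\mathbf{y}'})^{2}=2-2(\mathbf{x}^{T}\mathbf{x}')(\mathbf{y}^{T}\mathbf{y}')$ and $\mathbb E(Y_{\mathbf{x},\mathbf{y}}-Y_{\mathbf{x}',\mathbf{y}'})^{2}=4-2\mathbf{x}^{T}\mathbf{x}'-2\mathbf{y}^{T}\mathbf{y}'$, whose difference is $2(1-\mathbf{x}^{T}\mathbf{x}')(1-\mathbf{y}^{T}\mathbf{y}')\ge 0$; thus the increment hypothesis of the Sudakov--Fernique inequality holds and $\mathbb E(\nu_{m,n})=\mathbb E\sup X\le\mathbb E\sup Y=\mathbb E\|\mathbf{g}\|+\mathbb E\|\mathbf{h}\|\le\sqrt n+\sqrt m$, the last inequality by Jensen applied to $\mathbf{g}\mapsto(\|\mathbf{g}\|^{2})^{1/2}$. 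I expect this comparison step to be the only substantive point; everything else is routine, and in fact the whole statement can be quoted directly from \cite[Theorem II.7]{DS01}.

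It then remains only to assemble the pieces. Part (ii) is immediate, $\mathbb E(\nu_{m,n})\le\sqrt m+\sqrt n<1+\sqrt m+\sqrt n$, the additive $1$ being harmless slack. For part (i), note that $t+\sqrt m+\sqrt n\ge t+\mathbb E(\nu_{m,n})$ gives $\{\nu_{m,n}>t+\sqrt m+\sqrt n\}\subseteq\{\nu_{m,n}>t+\mathbb E(\nu_{m,n})\}$, so the displayed concentration bound completes the proof.
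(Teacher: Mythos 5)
Your argument is correct and is precisely the standard Davidson--Szarek proof: Gaussian concentration for the $1$-Lipschitz functional $G\mapsto\|G\|$ combined with a Slepian/Sudakov--Fernique comparison giving $\mathbb E(\nu_{m,n})\le\sqrt m+\sqrt n$ (your increment computation $2(1-\mathbf{x}^{T}\mathbf{x}')(1-\mathbf{y}^{T}\mathbf{y}')\ge 0$ is the right one). The paper supplies no proof of its own and simply cites \cite[Theorem II.7]{DS01}, which is exactly the result you have reconstructed, so your proposal matches the paper's (implicit) approach.
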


%------------------------------------------------------------------------------

\begin{theorem}\label{thsiguna} 
Let $\Gamma(x)=
\int_0^{\infty}\exp(-t)t^{x-1}dt$
denote the Gamma function and let  $x>0$. 
%{\rm and}~\zeta(t)=
%\frac{\sqrt{2m}}{\Gamma(m/2)}(t\sqrt{m/2})^{m-1}\exp(-mt^2/2)=
%2~t^{m-1}(\frac{m}{2})^{m/2}\exp(-\frac{m}{2}t^2)/\Gamma(\frac{m}{2}).$$ 
Then 

(i)  {\rm Probability} $\{\nu_{m,n}^+\ge m/x^2\}<\frac{x^{m-n+1}}{\Gamma(m-n+2)}$
for $m\ge n\ge 2$,

(ii) {\rm Probability} $\{\nu_{n,n}^+\ge x\}\le 2.35 {\sqrt n}/x$ 
for $n\ge 2$,

(iii)  $\mathbb E(\nu^+_{m,n})\le e\sqrt{m}/|m-n|$,
provided that $m\neq n$ and $e=2.71828\dots$.
%Probability $\{||(G+A)^+||\ge 2.35x\sqrt {n}\}\le 1/x$
%for any $m\times n$ matrix $A$.
\end{theorem}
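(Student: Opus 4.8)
All three statements are tail or moment bounds on the least positive singular value of a Gaussian matrix, since $\nu_{m,n}^{+}=\|G^{+}\|=1/\sigma_{\rho}(G)$ with $\rho=\min\{m,n\}$; by the symmetry $\nu_{m,n}^{+}=\nu_{n,m}^{+}$ and rotational invariance (Lemma~\ref{lepr3}) I may assume $m\ge n$, and by Theorem~\ref{thrnd} the matrix $G=G_{m,n}$ has rank $n$ almost surely, so $\sigma_{\rho}(G)=\sigma_{n}(G)>0$ with probability $1$ and $\nu_{m,n}^{+}$ is a well-defined positive random variable. Parts (i) and (ii) are the classical left-tail estimates for $\sigma_{n}(G)$ in the rectangular and the square case, and I would quote them from the works listed just before the statement, tracing (i) to the analysis of Gaussian condition numbers in \cite{CD05} and \cite{E88} and (ii) to the smoothed-analysis bound for $\|G^{-1}\|_{2}$ of a square Gaussian matrix, which is where the explicit constant $2.35$ originates.

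The mechanism behind (i)--(ii) that I would recall for the reader is this: the squared singular values of $G$ are the eigenvalues of the real Wishart matrix $W=G^{T}G\in\mathbb R^{n\times n}$, whose joint eigenvalue density is proportional to $\prod_{i}\lambda_{i}^{(m-n-1)/2}e^{-\lambda_{i}/2}\prod_{i<j}|\lambda_{i}-\lambda_{j}|$; as the smallest eigenvalue tends to $0$ with the others held fixed the Vandermonde and exponential factors stay bounded, so $\mathbb P\{\sigma_{n}(G)\le\delta\}=O(\delta^{m-n+1})$, and tracking the normalizing constant in this exact formula yields the $1/\Gamma(m-n+2)$ of (i) after unwinding the substitution $\delta=x^{2}/m$ implicit in the event $\nu_{m,n}^{+}\ge m/x^{2}$, and, in the square case $m=n$, the $1/x$ rate of (ii) with the stated constant. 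A self-contained but constant-wise weaker derivation of a bound of the shape of (i) would instead use the Rudelson--Vershynin inequality $\sigma_{n}(G)\ge n^{-1/2}\min_{j}\mathrm{dist}(G_{*j},\mathrm{span}\{G_{*k}:k\ne j\})$ together with the fact that, by Lemma~\ref{lepr3}, each such distance is the norm of a Gaussian vector in an $(m-n+1)$-dimensional subspace, i.e.\ a $\chi_{m-n+1}$ variable, followed by a union bound over $j$.

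For (iii) I would bound $\mathbb E(\nu_{m,n}^{+})=\mathbb E[1/\sigma_{n}(G)]$ by a case distinction on $m-n$. For moderate $m-n$ use $1/\sigma_{n}(G)=\|G^{+}\|_{2}\le\|G^{+}\|_{F}$ together with the exact identity $\mathbb E\|G^{+}\|_{F}^{2}=n/(m-n-1)$ (valid for $m-n\ge 2$; cf.\ \cite{HMT11}) and Jensen's inequality. For large $m-n$, where $\sigma_{n}(G)$ concentrates near $\sqrt m-\sqrt n$, combine a Gaussian lower-tail deviation bound for $\sigma_{n}(G)$ (cf.\ \cite{DS01}) for the bulk with the polynomial left-tail $\mathbb P\{\sigma_{n}(G)\le\delta\}=O(\delta^{m-n+1})$ from part (i)/the Wishart density for the region where $\sigma_{n}(G)$ is genuinely small, using $\mathbb E[1/\sigma_{n}(G)]=\int_{0}^{\infty}\mathbb P\{\sigma_{n}(G)<1/t\}\,dt$ and optimizing the split point. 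The borderline $m-n=1$, where neither shortcut applies directly, I would settle from the explicit $t^{-2}$ tail of $\nu_{m,n}^{+}$. In each regime the bookkeeping — in particular a Stirling estimate $\Gamma(m-n+2)\ge((m-n+1)/e)^{m-n+1}$ — is what squeezes the final constant down to exactly $e$.

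The probabilistic content — that a Gaussian matrix is, with overwhelming probability, far from any matrix of smaller rank — is classical; the genuine difficulty, and the reason I would lean on the cited literature rather than reprove everything from scratch, is the constant-chasing: matching the Wishart normalization precisely to $1/\Gamma(m-n+2)$ in (i) and to $2.35$ in (ii), and, for (iii), arranging that the three regimes of $m-n$ all fit under the single clean bound $e\sqrt m/(m-n)$ with the constant exactly $e$ rather than merely some absolute constant.
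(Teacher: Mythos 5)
Your proposal matches the paper's treatment: the paper's entire proof of this theorem is a three-line citation, referring to \cite[Proof of Lemma 4.1]{CD05} for part (i), to the smoothed-analysis bound \cite[Theorem 3.3]{SST06} for part (ii) (exactly the source of the constant $2.35$ that you identified), and to \cite[Proposition 10.2]{HMT11} for part (iii). Your additional sketches of the Wishart-density mechanism and of a possible direct derivation of (iii) go beyond what the paper records, but the approach --- quote these classical tail and moment bounds from the literature rather than reprove them --- is the same.
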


%------------------------------------------------------------------------------

\begin{proof}
 See \cite[Proof of Lemma 4.1]{CD05} for part (i), 
\cite[Theorem 3.3]{SST06} for part (ii),  and
\cite[Proposition 10.2]{HMT11} for part (iii).
\end{proof}

%------------------------------------------------------------------------------
 
The  probabilistic upper bounds of Theorem \ref{thsiguna}
on $\nu^+_{m,n}$ are reasonable already  
in the case of square matrices, that is, where $m=n$,
but are strengthened very fast as the difference $|m-n|$ grows from 1.

%------------------------------------------------------------------------------

Theorems \ref{thsignorm} and \ref{thsiguna}
combined imply that an $m\times n$ Gaussian
matrix is well-conditioned  
unless the integer $m+n$ is large or the integer $|m-n|$ is close to 0.
With some grain of salt
we can still consider  such a matrix  
well-conditioned  
 even
  where the integer $|m-n|$ is small or  vanishes
provided that  the integer $m$ is not large.
Clearly, these properties can be extended immediately to all submatrices.

% - - - - - - - - - - - - - - - - - - - - - - - - - - - - - - - - - - - - - - -
%------------------------------------------------------------------------------

\medskip

%------------------------------------------------------------------------------

\noindent {\bf Acknowledgements:}
Our research has been supported by NSF Grant CCF 1116736 
and PSC CUNY Award  68862--00 46.
We are also grateful to Vsevolod Oparin
for a pointer to relevant bibliography.
%and to the reviewers for valuable comments.

%------------------------------------------------------------------------------

%------------------------------------------------------------------------------


\begin{thebibliography}{hspace{0.5in}}

%2------------------------------------------------------------------------------

%\bibitem[A94]{A94}
%O. Axelsson, {\em Iterative Solution Methods},
%Cambridge Univ. Press, Cambridge, England, 1994.

%2------------------------------------------------------------------------------

\bibitem[AMT10]{AMT10}
H. Avron, P. Maymounkov, S. Toledo,
  Blendenpik: Supercharging LAPACK▓s Least-squares Solver,
{\em SIAM Journal on Scientific Computing}, 
{\bf 32}, 1217--1236, 2010.

%2------------------------------------------------------------------------------

\bibitem[AS00]{AS00}
N. Alon, J. Spencer, {\em The Probabilistic Method}, Wiley, 2000 (second edition).

%2------------------------------------------------------------------------------

%\bibitem[B02]{B02}
%M. Benzi, Preconditioning Techniques for Large Linear Systems: a Survey,
%{\em J. of Computational Physics}, {\bf 182}, 418--477, 2002.

%2------------------------------------------------------------------------------

\bibitem[B10]{B10}
S. B\"{o}rm, {\em Efficient Numerical Methods
for Non-local Operators: $\mathcal H^2$-Matrix
Compression, Algorithms and Analysis},
European Math. Society, 2010.

%------------------------------------------------------------------------------

\bibitem[BBBDD14]{BBBDD14}
M. Baboulin, D. Becker, G. Bosilca, A. Danalis, J. Dongarra,
An Efficient Distributed Randomized Algorithm 
for Solving Large Dense Symmetric Indefinite Linear Systems,
{\em Parallel Computing
(7th Workshop on Parallel Matrix Algorithms and Applications),} 
{\bf 40,~7}, 213--223, 2014.

%------------------------------------------------------------------------------

%\bibitem[BBC93]{BBC93}
%R. Barrett, M. W. Berry, T. F. Chan, J. Demmel, J. Donato, J. Dongarra,
% V. Eijkhout, R. Pozo, C. Romine, H. van der Vorst,
%{\em Templates for the Solution of Linear Systems: Building Blocks for
% Iterative Methods}, SIAM, Philadelphia, 1993.

%------------------------------------------------------------------------------

\bibitem[BCDHKS14]{BCDHKS14}
G. Ballard, E. Carson, J. Demmel,
M. Hoemmen, N. Knight, O. Schwartz,
Communication Lower Bounds and Optimal Algorithms 
for Numerical Linear Algebra.
{\em Acta Numerica}, {\bf 23}, 1--155, 2014.

%------------------------------------------------------------------------------

\bibitem[BGH03]{BGH03}
S. B\"{o}rm, L. Grasedyck, W. Hackbusch, 
Introduction to Hierarchical Matrices with
Applications, 
{\em Engineering Analysis with Boundary Elements}, 
{\bf 27, ~5}, 405--422, 2003.
 
%------------------------------------------------------------------------------

\bibitem[BGP05]{BGP05}
A. Bini, L. Gemignani, V. Y. Pan, 
Fast and Stable QR Eigenvalue Algorithms for Generalized Semiseparable Matrices and Secular Equation, 
{\em Numerische Mathematik}, {\bf 100,~3}, 373--408, 2005.

%------------------------------------------------------------------------------

\bibitem[BV88]{BV88}
W. Bruns, U. Vetter, {\em Determinantal Rings, 
Lecture Notes in Math.,} {\bf 1327}, 
Springer,
%Heidelberg, 
1988.

%------------------------------------------------------------------------------

%\bibitem[BW14]{BW14}
%C. Boutsidis, D. Woodruff,
%Optimal CUR Matrix Decompositions,
%{\em ACM Symposium on Theory of Computing (STOC 2014)},
%353--362, ACM Press, New York, 2014.
%Full version in arXiv:1405.7910v2  [cs.DS]  16 July 2014.

%------------------------------------------------------------------------------

%\bibitem[BY13]{BY13}
%L. A. Barba, R. Yokota, 
%How Will the Fast Multipole Method Fare in Exascale Era?
%{\em SIAM News}, {\bf 46}, {\bf 6}, 1--3, July/August 2013.

%2------------------------------------------------------------------------------

\bibitem[C00]{C00}
 B. A. Cipra, 
The Best of the 20th Century: 
Editors Name Top 10 Algorithms,
{\em SIAM News (Society for Industrial and Applied Mathematics)},
 {\bf 33, 4}, 2, May 16, 2000. 
%Retrieved December 23, 2010.

%------------------------------------------------------------------------------

\bibitem[CCF04]{CCF04}
M. Charikar, K. Chen, M. Farach-Colton, Finding Frequent
Items in Data Streams, 
{\em Theoretical Computer Science}, {\bf 312,~1}, 3--15, 2004.

%14------------------------------------------------------------------------------

\bibitem[CD05]{CD05}
Z. Chen, J. J. Dongarra, Condition Numbers of Gaussian Random Matrices,
{\em SIAM. J. on Matrix Analysis and Applications}, {\bf 27}, 603--620, 2005.

%------------------------------------------------------------------------------

\bibitem[CGR88]{CGR88}
J. Carrier, L. Greengard, V. Rokhlin,
A Fast Adaptive Algorithm for Particle Simulation,
{\em SIAM  Journal on Scientific Computing}, {\bf 9}, 669--686, 1988.

%------------------------------------------------------------------------------

%\bibitem[CPW74]{CPW74}
%R. E. Cline, R. J. Plemmons, G. Worm, 
%Generalized Inverses of Certain {T}oeplitz Matrices,
%{\em Linear Algebra and Its Applications,} {\bf 8}, 25--33, 1974.


%------------------------------------------------------------------------------

\bibitem[CW13]{CW13}
 K. L. Clarkson, D. P. Woodruff,
Low Rank Approximation and Regression in Input Sparsity Time,
{\em Proceedings of the Forty-fifth Annual 
ACM Symposium on Theory of Computing (STOC '13)}, 
 81--90,
ACM New York, NY, USA, 2013.
Full Version in	arXiv:1207.6365.

%33------------------------------------------------------------------------------

\bibitem[D88]{D88}
J. Demmel,  
The Probability That a Numerical Analysis Problem Is Difficult,
{\em Math. of Computation}, {\bf 50}, 449--480, 1988.

%1------------------------------------------------------------------------------

\bibitem[DE]{DE}
{\em Dillon Engineering}, 
http://www.dilloneng.com/fft\underline{ }ip/parallel-fft.

%1------------------------------------------------------------------------------

%\bibitem[DKM06]{DKM06}
%P. Drineas, R. Kannan, M. W. Mahoney,
%Fast Monte Carlo Algorithms for Matrices I--III,
%{\em SIAM J. on Computing},
%{\bf 36,~1}, 132--206, 2006.

%1------------------------------------------------------------------------------

\bibitem[DL78]{DL78}
R. A. Demillo, R. J. Lipton,
A Probabilistic Remark on Algebraic Program Testing,
{\em Information Processing Letters}, {\bf 7}, {\bf 4}, 193--195, 1978. 

%8------------------------------------------------------------------------------

\bibitem[DS01]{DS01}
K. R. Davidson, S. J. Szarek, 
Local Operator Theory, Random Matrices, and Banach Spaces, 
in {\em Handbook on the Geometry of  Banach Spaces} 
(W. B. Johnson and J. Lindenstrauss editors), pages 317--368, 
North Holland, Amsterdam, 2001. 

%8------------------------------------------------------------------------------

\bibitem[E88]{E88}
A. Edelman, Eigenvalues and Condition Numbers of Random Matrices,
{\em SIAM J. on Matrix Analysis and Applications}, {\bf 9}, {\bf 4},
543--560, 1988.

%7------------------------------------------------------------------------------

\bibitem[EGH13]{EGH13}
Y. Eidelman, I. Gohberg, I. Haimovici,
{\em Separable Type Representations of Matrices and Fast Algorithms, volumes {\bf 1} and {\bf 2}},
%Basics. Completion problems. Multiplication and Inversion Algorithms,
% Volume 2:
%Eigenvalue Methods
Birkh{\"a}user, 2013.

%------------------------------------------------------------------------------

\bibitem[ES05]{ES05}
A. Edelman, B. D. Sutton,  Tails of Condition Number Distributions,
{\em SIAM J. on Matrix Analysis and Applications}, {\bf 27}, {\bf 2},
547--560, 2005.

%------------------------------------------------------------------------------

%\bibitem[FKV]{FKV}
%A. Frieze, R. Kannan, S. Vempala,
%Fast Monte-Carlo Algorithms for Finding Low-rank Approximations, 
%{\em J. of ACM}, {\bf 51}, 1025--1041, 2004. Proc. version in {\em 39th FOCS},
%pages 370--378, IEEE Computer Society Press, 1998.

%------------------------------------------------------------------------------

\bibitem[F77]{F77}
R. Freivalds,   
Probabilistic Machines Can Use Less Running Time, 
{\em IFIP Congress 1977},  839--842, 1977.

%------------------------------------------------------------------------------

%\bibitem[G97]{G97}
%A. Greenbaum,
%{\em Iterative Methods  for Solving Linear Systems},
%SIAM, Philadelphia, 1997.

%------------------------------------------------------------------------------

\bibitem[GH03]{GH03}
 L. Grasedyck, W. Hackbusch, 
Construction and Arithmetics of H-Matrices,
{\em Computing}, {\bf 70,~4}, 295--334, 2003.

%------------------------------------------------------------------------------

\bibitem[GL13]{GL13}
G. H. Golub, C. F. Van Loan, 
{\em Matrix Computations},
The Johns Hopkins University Press, Baltimore, Maryland, 2013 (fourth edition).

%------------------------------------------------------------------------------

\bibitem[GR87]{GR87}
L. Greengard, V. Rokhlin,
A Fast Algorithm for Particle Simulation,
{\em Journal of Computational Physics}, {\bf 73}, 325--348, 1987.

%1------------------------------------------------------------------------------

%\bibitem[GTZ97]{GTZ97}
% S. A. Goreinov, E. E. Tyrtyshnikov,  N. L. Zamarashkin, 
%A Theory of Pseudo-skeleton Approximations,
%{\em Linear Algebra and Its Applications}, {\bf 261}, 1--21, 1997.

%1------------------------------------------------------------------------------

%\bibitem[GZT97]{GZT97}
% S. A. Goreinov, N. L.~Zamarashkin, E.~E.~Tyrtyshnikov,
%Pseudo-skeleton Approximations by Matrices of Maximal Volume,
%{\em Mathematical Notes}, {\bf 62,~4}, 515--519, 1997.

%6------------------------------------------------------------------------------

\bibitem[H02]{H02}
N. J. Higham, {\em Accuracy and Stability in Numerical Analysis},
SIAM, Philadelphia, 2002.
% (second edition).

%6------------------------------------------------------------------------------

\bibitem[HMST11]{HMST11} 
N. Halko, P. G. Martinsson, Y. Shkolnisky, M. Tygert,
An Algorithm for the Principal Component Analysis of Large Data Sets,
{\em SIAM J. Scientific Computation}, {\bf 33,~5}, 2580--2594, 2011.

%6------------------------------------------------------------------------------

\bibitem[HMT11]{HMT11}
N. Halko, P. G. Martinsson, J. A. Tropp,
Finding Structure with Randomness: Probabilistic Algorithms
for 
%Constructing
 Approximate Matrix Decompositions, 
{\em SIAM Review}, {\bf 53,~2}, 217--288, 2011.

%-----------------------------------------------------------------------------

\bibitem[M11]{M11}
M. W. Mahoney,
Randomized Algorithms for Matrices and Data,  
{\em Foundations and Trends in Machine Learning}, NOW Publishers, {\bf 3,~2}, 2011.
Preprint: arXiv:1104.5557 (2011)
(Abridged version in: {\em Advances in Machine Learning and Data Mining for Astronomy}, 
edited by M. J. Way et al., pp. 647--672, 2012.) 

%------------------------------------------------------------------------------

\bibitem[MR95]{MR95}
 R. Motwani, P. Raghavan,
{\em Randomized Algorithms},
Cambridge UNiversity Press, 1995,

%------------------------------------------------------------------------------

\bibitem[MRT05]{MRT05}
P. G. Martinsson, V. Rokhlin, M. Tygert, 
A Fast Algorithm for the Inversion of General
Toeplitz Matrices, 
{\em Comput. Math. Appl.}, {\bf 50}, 741--752, 2005.

%------------------------------------------------------------------------------

\bibitem[P01]{P01}
V. Y. Pan,
{\em Structured Matrices and Polynomials: Unified Superfast Algorithms}, \\
Birk\-h\"auser/Sprin\-ger, Boston/New York, 2001.

%1------------------------------------------------------------------------------

\bibitem[P15]{P15}
V. Y. Pan,
Transformations of Matrix Structures Work Again, 
{\em Linear Algebra and Its Applications}, 
{\bf 465}, 1--32, 2015.

%1-----------------------------------------------------------------------------

\bibitem[PQY15]{PQY15} 
V. Y. Pan, G. Qian, X. Yan, 
Random Multipliers Numerically Stabilize Gaussian and Block Gaussian Elimination: 
Proofs and an Extension to Low-rank Approximation,
{\em Linear Algebra and Its Applications}, {\bf 481}, 202--234, 2015.

%1------------------------------------------------------------------------------

\bibitem[PSZ15]{PSZ15}
V. Y. Pan, J. Svadlenka, L. Zhao,
Estimating the Norms of Circulant and Toeplitz Random Matrices and Their Inverses,
{\em Linear Algebra  and Its Applications}, {\bf 468}, 197--210, 2015.

%1-----------------------------------------------------------------------------

\bibitem[PW08]{PW08}
V. Y. Pan, X. Wang,
Degeneration of Integer Matrices Modulo an Integer, 
{\em  Linear Algebra and Its Applications}, {\bf 429}, 
2113--2130, 2008.

%1------------------------------------------------------------------------------

\bibitem[PZ16]{PZ16}
V. Y. Pan, L. Zhao,
Low-rank Approximation of a Matrix:  
Novel Insights, New Progress, and Extensions,
Proc. of the {\em Eleventh International Computer Science Symposium in Russia  (CSR'2016)}, (Alexander Kulikov and Gerhard Woeginger, editors), St. Petersburg, Russia, June 2016,  {\em Lecture Notes in Computer Science (LNCS)}, {\bf 9691}, 352--366, Springer International Publishing, Switzerland (2016).

%1------------------------------------------------------------------------------

\bibitem[PZa]{PZa}
V. Y. Pan, L. Zhao,
Numerically Safe Gaussian Elimination
with No Pivoting,
arxiv 1501.05385  CS, 
%(36 pages, 12 figures), 
submitted on January 22, 2015, 
revised in June 2016.

%1------------------------------------------------------------------------------

\bibitem[RST09]{RST09}
V. Rokhlin, A. Szlam, M. Tygert,
A Randomized Algorithm for Principal Component Analysis,
{\em SIAM Journal on Matrix Analysis and Applications},
{\bf  31,~3}, 1100--1124, 2009.

%1------------------------------------------------------------------------------

\bibitem[RT08]{RT08}
V. Rokhlin,  M. Tygert,
 A Fast Randomized Algorithm for Overdetermined Linear Least-squares Regression,
{\em Proc. Natl. Acad. Sci. USA}, {\bf 105,~36},
13212--13217, 2008.

%--------------------------------------------------------------------------------

\bibitem[S80]{S80}
J. T. Schwartz,
Fast Probabilistic Algorithms for Verification of Polynomial Identities, 
{\em Journal of ACM}, {\bf 27}, {\bf 4}, 701--717, 1980. 

%------------------------------------------------------------------------------

\bibitem[S98]{S98}
G. W. Stewart,
{\em Matrix Algorithms, Vol I: Basic Decompositions},
SIAM, 
Philadelphia, 
1998.

%------------------------------------------------------------------------------

%\bibitem[S03]{S03}
%Y. Saad, 
%{\em Iterative Methods  for Sparse Linear Systems},
%SIAM, Philadelphia, 2003.

%------------------------------------------------------------------------------

\bibitem[S06]{S06}
T. Sarl\'os,
 Improved Approximation Algorithms for Large Matrices via
Random Projections, {\em Proceedings of IEEE Symposium on Foundations of Computer
Science (FOCS)}, 143--152, 2006.

%6------------------------------------------------------------------------------

\bibitem[SST06]{SST06}
A. Sankar, D. Spielman, S.-H. Teng, 
Smoothed Analysis of the Condition Numbers and Growth Factors of Matrices, 
{\em SIAM J. on Matrix Analysis and Applics.}, {\bf 28}, {\bf 2}, 446--476, 2006. 

%6------------------------------------------------------------------------------

\bibitem[T00]{T00}
E.E. Tyrtyshnikov, Incomplete Cross-Approximation in the Mosaic-Skeleton Method,
{\em Computing}, {\bf 64}, 367--380, 2000.

%6------------------------------------------------------------------------------

\bibitem[T11]{T11}
J. A. Tropp,
Improved Analysis of the Subsampled Randomized Hadamard Transform,
{\em Adv. Adapt. Data Anal.}, {\bf 3,~1--2} 
(Special issue "Sparse Representation of Data and Images"), 115--126, 2011.
Also arXiv math.NA 1011.1595.

%6------------------------------------------------------------------------------

\bibitem[TZ12]{TZ12}
M. Thorup, Y. Zhang, Tabulation-based 5-independent Hashing
with Applications to Linear Probing and Second Moment Estimation,
{\em SIAM J. on Computing}, {\bf 41,~2}, 293--331, 2012.

%21------------------------------------------------------------------------------

\bibitem[VVGM05]{VVGM05}
R. Vandebril, M. Van Barel, G. Golub, N. Mastronardi,
A Bibliography on Semiseparable Matrices,
{\em Calcolo}, {\bf 42,~3--4,} 249--270, 2005.

%21------------------------------------------------------------------------------

\bibitem[VVM07/08]{VVM07/08} 
R. Vandebril, M. Van Barel, N. Mastronardi,
{\em Matrix Computations and Semiseparable Matrices}
%: Linear Systems} 
(Volumes 1 and 2),
The Johns Hopkins University Press, Baltimore, Maryland, 2007.

%21------------------------------------------------------------------------------

%\bibitem[VVM08]{VVM08}
%R. Vandebril, M. Van Barel, N. Mastronardi,
%{\em Matrix Computations and Semiseparable Matrices:
%Eigenvalue and Singular Value Methods} (Volume 2), 
%The Johns Hopkins University Press, Baltimore, Maryland, 2008.

%21------------------------------------------------------------------------------

\bibitem[VVVF10]{VVVF10}
 M. Van Barel, R. Vandebril, P. Van Dooren, K. Frederix,
Implicit Double Shift $QR$-algorithm for Companion Matrices,
{\em Numerische Mathematik}, {\bf 116}, 177--212, 2010.

%21------------------------------------------------------------------------------

\bibitem[W14]{W14}
D.P. Woodruff,
 Sketching As a Tool for Numerical Linear Algebra, 
{\em Foundations and Trends in Theoretical Computer Science}, 
{\bf 10,~1--2}, 1--157, 2014.

%------------------------------------------------------------------------------

\bibitem[X12]{X12}
J. Xia, 
On the Complexity of Some Hierarchical Structured Matrix Algorithms,
{\em SIAM J. Matrix Anal. Appl.}, {\bf 33}, 388--410, 2012.

%--------------------------------------------------------------------------------

\bibitem[X13]{X13}
J. Xia, 
Randomized Sparse Direct Solvers,
{\em  SIAM J. Matrix Anal. Appl.}, {\bf 34}, 197--227, 2013.

%--------------------------------------------------------------------------------

\bibitem[XXCB14]{XXCB14}
J. Xia, Y. Xi, S. Cauley, V. Balakrishnan,
Superfast and Stable Structured Solvers for Toeplitz Least 
Squares via Randomized Sampling,
%{\em SIAM J. on Matrix Anal. and Applications},  
 {\em SIAM J. Matrix Anal. Appl.}, {\bf 35}, 44--72, 2014.

%--------------------------------------------------------------------------------

\bibitem[XXG12]{XXG12}
J. Xia, Y. Xi, M. Gu, 
A Superfast Structured Solver for Toeplitz Linear Systems via Randomized Sampling,
{\em SIAM J. Matrix Anal. Appl.}, {\bf 33}, 837--858, 2012.

%--------------------------------------------------------------------------------

\bibitem[Z79]{Z79}
R. E. Zippel, Probabilistic Algorithms for Sparse Polynomials, 
{\em Proceedings of EUROSAM'79, Lecture Notes in Computer Science}, 
{\bf 72}, 216--226, Springer, Berlin, 1979.

%--------------------------------------------------------------------------------

\end{thebibliography}
\end{document}